\newtheorem{thm}{Theorem}[section]
\newtheorem{prop}[thm]{Proposition}
\newtheorem{cor}[thm]{Corollary}
\newtheorem{lem}[thm]{Lemma}
\newtheorem{defn}{Definition}
\newtheorem{conj}{Conjecture}
\newtheorem{ex}{Example}
\begin{document}

\title{Coding in graphs and linear orderings}

	\author{J.\ Knight, A.\ Soskova, and S.\ Vatev}

\address{Department of Mathematics\\
		University of Notre Dame\\
		USA}
	\email{knight.1@nd.edu}

	\address{Department of Mathematical Logic\\
		Sofia University\\
		Bulgaria}
	\email{asoskova@fmi.uni-sofia.bg}

\address{Department of Mathematical Logic\\
		Sofia University\\
		Bulgaria}
	\email{stefanv@fmi.uni-sofia.bg}
	\thanks{All 
three authors were partially supported by the NSF grant DMS-1600625.  The last two authors were partially supported by BNSF, MON, DN 02/16. }

\maketitle

\begin{abstract}

  There is a Turing computable embedding $\Phi$ of directed graphs $\mathcal{A}$ in undirected graphs (see \cite{M}).  Moreover, there is a fixed tuple of formulas that give a \emph{uniform} effective 
interpretation; i.e., for all directed graphs $\mathcal{A}$, these formulas interpret $\mathcal{A}$ in 
$\Phi(\mathcal{A})$. 
It follows that $\mathcal{A}$ is Medvedev reducible to $\Phi(\mathcal{A})$ uniformly; i.e., $\mathcal{A}\leq_s\Phi(\mathcal{A})$ with a fixed Turing operator that serves for all $\mathcal{A}$.  We observe that there is a graph $G$ that is not Medvedev reducible to any linear ordering.  Hence, $G$ is not effectively interpreted in any linear ordering.  Similarly, there is a graph that is not interpreted in any linear ordering using computable $\Sigma_2$ formulas.  Any graph can be interpreted in a linear ordering using computable $\Sigma_3$ formulas.  Friedman and Stanley \cite{FS} gave a Turing computable embedding $L$ of directed graphs in linear orderings.  We show that there is no fixed tuple of $L_{\omega_1\omega}$-formulas that, for all $G$, interpret the input graph $G$ in the output linear ordering $L(G)$.  Harrison-Trainor and Montalb\'{a}n \cite{HTM} have also shown this, by a quite different proof.                        

\end{abstract}

\section{Introduction} 

Friedman and Stanley \cite{FS} introduced Borel embeddings as a way of comparing classification problems for different classes of structures.  A Borel embedding of a class $\mathcal{K}$ in a class $\mathcal{K}'$ represents a uniform procedure for coding structures from $\mathcal{K}$ in structures from $\mathcal{K}'$.  Many Borel embeddings are actually Turing computable \cite{CCKM}.  A Turing computable embedding of a class $\mathcal{K}$ in a class $\mathcal{K}'$ represents an effective coding procedure.  

When $\mathcal{A}$ is coded in $\mathcal{B}$, effective decoding is represented by a Medvedev reduction of $\mathcal{A}$ to $\mathcal{B}$.  Harrison-Trainor, Melnikov, R. Miller, and Montalb\'{a}n \cite{HTM^3} defined a notion of effective interpretation of $\mathcal{A}$ in $\mathcal{B}$.  They also defined a notion of computable functor, where this is a pair of Turing operators, one taking copies of $\mathcal{B}$ to copies of $\mathcal{A}$, and the other taking isomorphisms between copies of $\mathcal{B}$ to isomorphisms between the corresponding copies of $\mathcal{A}$.  They showed that $\mathcal{A}$ is effectively interpreted in $
\mathcal{B}$ iff there is a computable functor from $\mathcal{B}$ to $\mathcal{A}$.  The first operator is a Medvedev reduction.  For some Turing computable embeddings $\Phi$, there are \emph{uniform} formulas that effectively interpret the input structure in the output structure, so we get a uniform Medvedev reduction.  This uniform Medvedev reduction represents uniform effective decoding.  Harrison-Trainor, R. Miller, and Montalb\'{a}n \cite{HTM^2} also considered interpretations by $L_{\omega_1\omega}$ formulas, guaranteeing Borel decoding.

We mention here a paper of Hirschfeldt, Khoussainov, Shore, and Slinko \cite{HKSS} that is related to \cite{HTM^3}.  The goal in \cite{HKSS} was to give conditions guaranteeing that computable dimension transfers from a structure $\mathcal{A}$ to a structure $\mathcal{B}$.  There are two main results.  In one, $\mathcal{A}$ is effectively defined in $\mathcal{B}$, while in the other, $\mathcal{A}$ is effectively interpreted in $\mathcal{B}$.  Both results involve conditions saying that $\mathcal{B}$ is essentially rigid over the defined (or interpreted) copy of $\mathcal{A}$.  

The class of undirected graphs and the class of linear orderings both lie on top under Turing computable embeddings.  The standard Turing computable embeddings of directed graphs (or structures for an arbitrary computable relational language) in undirected graphs come with uniform effective interpretations.  We give examples of graphs that are not Medvedev reducible to any linear ordering, or to the jump of any linear ordering.  By contrast, any single graph can be effectively interpreted in the second jump of some linear ordering.  

For the known Turing computable embedding of the class of graphs in the class of linear orderings, due to Friedman and Stanley, we show that there is no uniform interpretation defined by $L_{\omega_1\omega}$ formulas; that is, no fixed tuple of $L_{\omega_1\omega}$ formulas can interpret every graph in its Friedman-Stanley ordering.               

In the remainder of the introduction, we give some definitions and background.  We recall the Turing computable embedding of directed graphs (or structures for an arbitrary computable relational language) in undirected graphs.  For this embedding, we have a uniform effective interpretation.  In Section 2, we describe the graphs that are not Medvedev reducible to any linear ordering, or to the first jump of a linear ordering.  We explain why any graph is effectively interpreted in the second jump of some linear ordering---so we have a Medvedev reduction.  In Section 3, we recall the Turing computable embedding $\Phi$ of graphs in linear orderings due to Friedman and Stanley.  We show that there do not exist formulas of $L_{\omega_1\omega}$ that, for all $G$, interpret $G$ in~$\Phi(G)$.      

\subsection{Conventions}

We assume that the language of each structure is computable, where this means that the set of non-logical symbols is computable and we can effectively determine the type and arity of each symbol. We may assume that the languages are relational.  We restrict our attention to structures with universe equal to $\omega$.  Let $Mod(L)$ be the class of $L$-structures with this universe.  We identify a structure $\mathcal{A}$ with its atomic diagram $D(\mathcal{A})$.  We may identify this, via G\"{o}del numbering, with a set of natural numbers, or with an element of $2^\omega$.  Thus, we think of $Mod(L)$ as a subclass of $2^\omega$.  For a class of structures $\mathcal{K}\subseteq Mod(L)$, we suppose that $\mathcal{K}$ is axiomatized by an $L_{\omega_1\omega}$ sentence.  By a result of L\'{o}pez-Escobar \cite{LE}, this is the same as assuming that $\mathcal{K}$ is a Borel subclass of $Mod(L)$ closed under isomorphism.  

\subsection{Borel embeddings}

The following definition is from \cite{FS}.  

\begin{defn}  

We say that a class $\mathcal{K}$ is \emph{Borel embeddable} in a class $\mathcal{K}'$, and we write $\mathcal{K}\leq_B \mathcal{K}'$, if there is a Borel function $\Phi:\mathcal{K}\rightarrow \mathcal{K}'$ such that for $\mathcal{A},\mathcal{B}\in \mathcal{K}$, $\mathcal{A}\cong\mathcal{B}$ iff $\Phi(\mathcal{A})\cong\Phi(\mathcal{B})$.

\end{defn}

A Borel embedding of $\mathcal{K}$ into $\mathcal{K}'$ represents a uniform procedure for coding structures from $\mathcal{K}$ in structures from $\mathcal{K}'$.  Friedman and Stanley \cite{FS} gave the following result.            

\begin{thm}

The following classes lie on top under $\leq_B$.

\begin{enumerate}

\item  undirected graphs

\item  fields of any fixed characteristic

\item  $2$-step nilpotent groups

\item  linear orderings

\end{enumerate}

\end{thm}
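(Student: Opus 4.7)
The plan is to produce, for each of the four classes $\mathcal{K}'$, a Borel embedding of an arbitrary Borel class $\mathcal{K}\subseteq\mathrm{Mod}(L)$ into $\mathcal{K}'$. The work factors through the class of directed graphs: any Borel class of structures for a countable relational language $L$ admits a Borel embedding into directed graphs by introducing, for each atomic fact $R(a_1,\dots,a_k)$ that holds in the input, a new ``tuple vertex'' $t_{R,\bar a}$ joined to the coordinates of $\bar a$ by labeled edges, then flattening the finitely many labels by attaching gadgets of distinct shapes. So it suffices to embed the class of directed graphs into each of (1)--(4).

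For undirected graphs, I would replace each directed edge $(a,b)$ of the input digraph by a path $a\text{--}u_{ab}\text{--}v_{ab}\text{--}b$ with pendant paths of distinct lengths attached to $u_{ab}$ and $v_{ab}$ to break symmetry between source and target, together with a fixed distinguishing pendant attached to every original vertex (this is essentially the embedding $\Phi$ from the abstract). For linear orderings, I would use the Friedman--Stanley construction recalled in Section 3: the output is a $\mathbb{Q}$-shuffle of blocks $B_n$, one per vertex of the input graph $G=(\omega,E)$, whose internal order types encode both the index $n$ and the row of the edge matrix at $n$, so that the edge relation is readable from the block structure. For fields of fixed characteristic $p$, take algebraically independent transcendentals $\{x_n\}$ over $\mathbb{F}_p$, one per vertex, and for each edge $\{n,m\}$ adjoin an element of the algebraic closure whose existence over the subfield generated by the $x_i$'s tracks the edge. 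For $2$-step nilpotent groups, apply Mekler's construction, which assigns to a graph a nilpotent group of class $2$ and odd prime exponent $p$ in which the commutator structure recovers the graph.

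The main obstacle in each case is the ``only if'' direction: non-isomorphic inputs must produce non-isomorphic outputs. For undirected graphs and linear orderings this is a direct combinatorial check that the coding gadgets and blocks are rigidly recoverable from the output up to isomorphism, using that the distinguishing pendants or order types are preserved by any isomorphism. For fields the verification requires extracting a canonical transcendence basis over the prime field and then reading off the algebraic relations forced by the edge set. The deepest case is Mekler's: one must analyze the centralizer/commutator structure of the output group in order to reconstruct the edge relation, and this is a substantial theorem of group theory that does the real work. Borelness is automatic throughout, since each construction defines the output diagram uniformly from the input diagram by an arithmetic procedure; the map $\Phi$ is in fact computable, and computable maps between $2^\omega$-coded classes are Borel.
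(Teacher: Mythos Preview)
The paper itself does not give a proof of this theorem; it simply attributes the four embeddings to their sources (Lavrov/Nies/Marker for undirected graphs, Friedman--Stanley for fields and for linear orderings, Mekler or alternatively Mal'tsev for nilpotent groups). Your overall strategy---factor through directed graphs and then embed digraphs into each target class---matches this, and your sketches for undirected graphs, fields, and nilpotent groups are reasonable pointers to the cited constructions.

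Your description of the linear-ordering case, however, is not the Friedman--Stanley construction and, as stated, does not yield a Borel embedding. You propose a $\mathbb{Q}$-shuffle of blocks $B_n$, one per vertex $n$, with $B_n$ encoding ``the index $n$ and the row of the edge matrix at $n$.'' Any such scheme depends on the particular enumeration of the vertex set by $\omega$: if $G_1\cong G_2$ via a nontrivial relabeling, the multisets of pairs $(n,\text{row}_n)$ differ, so the resulting shuffles are non-isomorphic and the forward implication $G_1\cong G_2\Rightarrow\Phi(G_1)\cong\Phi(G_2)$ fails. More fundamentally, a shuffle of per-vertex blocks cannot recover the edge relation in a label-free way, because a linear order provides no mechanism for one block to refer to another. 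The actual Friedman--Stanley embedding, recalled in Section~3, is a sub-ordering of $\mathbb{Q}^{<\omega}$ under the lexicographic order: each element $r_0q_1r_1\cdots q_nr_nk$ names an $n$-tuple of vertices of $G$ (via the densely interleaved sets $A_i\subseteq\mathbb{Q}$) together with the index of its atomic type, so the ordering encodes the full atomic diagram of $G$ in a tree-like, label-independent fashion. You correctly point to Section~3, but you should replace your shuffle description with that construction.
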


Friedman and Stanley defined an embedding of graphs in fields of any fixed characteristic.  They also defined an embedding of graphs in linear orderings.  For the other classes listed above, Friedman and Stanley credit earlier sources.  Lavrov \cite{L} defined an embedding of $Mod(L)$ in undirected graphs, for any $L$.  There are similar constructions due to Nies \cite{N} and Marker \cite{M}.  Mekler \cite{Me} defined an embedding of graphs in $2$-step nilpotent groups.  Alternatively, we get an embedding of graphs in $2$-step nilpotent groups by composing the embedding of graphs in fields with an earlier embedding by Mal'tsev \cite{Ma} of fields in $2$-step nilpotent groups.  

\subsection{Turing computable embeddings}

Kechris suggested to the first author that she and her students should consider effective embeddings.  This is done in \cite{CCKM}, \cite{KMV}.  

\begin{defn} 

We say that a class $\mathcal{K}$ is \emph{Turing computably embedded} in a class $\mathcal{K}'$, and we write $\mathcal{K}\leq_{tc} \mathcal{K}'$, if there is a Turing operator $\Phi:\mathcal{K}\rightarrow \mathcal{K}'$ such that for all $\mathcal{A},\mathcal{B}\in \mathcal{K}$, $\mathcal{A}\cong\mathcal{B}$ iff $\Phi(\mathcal{A})\cong\Phi(\mathcal{B})$.

\end{defn}

A Turing computable embedding represents an effective coding procedure.  The next result is in \cite{CCKM}.  

\begin{thm}

The following classes lie on top under $\leq_{tc}$.

\begin{enumerate}

\item  undirected graphs

\item  fields of any fixed characteristic

\item  $2$-step nilpotent groups

\item  linear orderings

\end{enumerate} 

\end{thm}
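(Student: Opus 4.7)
The strategy is to reduce the theorem to the Borel version (Theorem~1.1) by checking that each of the standard embeddings proving Theorem~1.1 can in fact be realized by a Turing operator, together with the observation that every $L_{\omega_1\omega}$-axiomatized class $\mathcal{K}\subseteq\mathrm{Mod}(L)$ admits a Turing computable embedding into the class of undirected graphs. Since $\leq_{tc}$ is transitive (Turing operators compose), it then suffices to exhibit Turing computable embeddings of undirected graphs into each of the other three target classes.

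First I would handle the reduction from an arbitrary class $\mathcal{K}$ to undirected graphs. Following Lavrov (or Marker), one represents each $n$-ary relation symbol $R_i$ of the language by attaching, to each tuple $\bar a$ with $\mathcal{A}\models R_i(\bar a)$, a finite combinatorial gadget of a distinguishable type (for example, cycles or trees of prescribed size depending on $i$), with auxiliary markers identifying the position of each coordinate of $\bar a$. Given any enumeration of $D(\mathcal{A})$, these gadgets can be produced uniformly by a Turing functional, yielding an enumeration of the atomic diagram of a graph $\Phi(\mathcal{A})$ whose isomorphism type determines $\mathcal{A}$. This shows $\mathcal{K}\leq_{tc}$ undirected graphs.

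Next, for each of the other three target classes, I would inspect the known Borel embedding from graphs and verify that it is Turing computable. For fields of characteristic $p$, given a graph $G$ on $\omega$, form $\mathbb{F}_p(x_i:i\in\omega)$ and adjoin $\sqrt{x_i+x_j}$ for each edge $\{i,j\}$; fixing a canonical normal form for polynomial expressions in these generators makes equality decidable and lets one enumerate the atomic diagram of the output from an enumeration of the edges. For $2$-step nilpotent groups, one may either apply Mekler's construction directly (using a normal form for products of generators and commutators) or compose the field construction with Mal'tsev's computable embedding of fields into $2$-step nilpotent groups. For linear orderings, one uses the Friedman--Stanley construction, to be recalled in Section~3, whose output is assembled from blocks in a way that makes the comparison of any two positions in the output decidable from a finite portion of the input graph.

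The main obstacle is the verification of computability for the algebraic target classes, most notably the field construction, where one must commit to an explicit effective presentation of the output so that equality of two output terms is computable uniformly in an enumeration of the input's atomic diagram; after such a normal form is chosen, the checks in the remaining cases are routine but must be carried out uniformly so that the same Turing functional works for every structure in $\mathcal{K}$.
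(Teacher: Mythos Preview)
Your proposal is correct and follows essentially the same approach as the paper: the paper's proof is a one-line remark that the Borel embeddings of Lavrov, Marker, Friedman--Stanley, Mekler, and Mal'tsev used to establish Theorem~1.1 are in fact Turing computable, and your write-up simply unpacks this observation construction by construction. Your added detail (normal forms for the field and nilpotent-group cases, the gadget description for graphs) is exactly the kind of verification the paper leaves to the reader.
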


The involves simply noting that the Borel embeddings of Friedman-Stanley, Lavrov, Nies, Marker, Mekler, and Mal'tsev are all, in fact, Turing computable. 

\subsection{Medvedev reductions}

A \emph{problem} is a subset of $2^\omega$ or $\omega^\omega$.  Problem $P$ is Medvedev reducible to problem $Q$ if there is a Turing operator $\Phi$ that takes elements of $Q$ to elements of $P$.  The problems that interest us ask for copies of particular structures, where each copy is identified with an element of $2^\omega$.  

\begin{defn}

We say that $\mathcal{A}$ is \emph{Medvedev reducible} to $\mathcal{B}$, and we write $\mathcal{A}\leq_s\mathcal{B}$ if there is a Turing operator that takes copies of $\mathcal{B}$ to copies of $\mathcal{A}$.  

\end{defn}

Supposing that $\mathcal{A}$ is coded in $\mathcal{B}$, a Medvedev reduction of $\mathcal{A}$ to 
$\mathcal{B}$ represents an effective decoding procedure.    

\subsection{Sample embedding}

Below, we describe Marker's Turing computable embedding of directed graphs in undirected graphs. 

\begin{enumerate}

\item  For each point $a$ in the directed graph $\mathcal{A}$, the undirected graph $\mathcal{B}$ has a point $b_a$ connected to a triangle. 

\item  For each ordered pair of points $(a,a')$ from $\mathcal{A}$, $\mathcal{B}$ has a point $p_{(a,a')}$ that is connected directly to $b_a$ and with one intermediate point to $b_{a'}$ ($p_{(a,a')}$ and $b_{a'}$ are each connected directly to the intermediate point $c$).  The point $p_{(a,a')}$ is connected to a square if there is an arrow from $a$ to $a'$, and to a pentagon otherwise.

\end{enumerate} 

For structures $\mathcal{A}$ with more relations, the same idea works---we use more special points and more $n$-gons. 

\bigskip
\noindent
\textbf{Fact}:  For Marker's embedding $\Phi$ of directed graphs in undirected graphs, there are finitary existential formulas that, for all inputs $\mathcal{A}$, define in $\Phi(\mathcal{A})$ the following:  

\begin{enumerate}

\item  the set $D$ consisting of the points $b_a$ connected to a triangle, 

\item  the set of ordered pairs $(b_a,b_{a'})$ such that the special point $p_{(a,a')}$ is connected to a square,

\item  the set of ordered pairs $(b_a,b_{a'})$ such that the special point $p_{(a,a')}$ is connected to a pentagon.

\end{enumerate}

This guarantees that any copy of $\Phi(\mathcal{A})$ computes a copy of $\mathcal{A}$.  

\subsection{Effective interpretations and computable functors}

In a number of familiar examples where $\mathcal{A}\leq_s\mathcal{B}$, the structure $\mathcal{A}$ is defined or interpreted in $\mathcal{B}$ using formulas of special kinds.    

\begin{ex}

The usual definition of the ring of integers $\mathbb{Z}$ involves an interpretation in the semi-ring of natural numbers $\mathbb{N}$.  Let $D$ be the set of ordered pairs $(m,n)$ of natural numbers.  We think of the pair $(m,n)$ as representing the integer $m - n$.  With this in mind, we can easily give finitary existential formulas that define ternary relations of addition and multiplication on $D$, and the complements of these relations, and a congruence relation $\sim$ on $D$, and the complement of this relation, such that $(D,+,\cdot)/_{\sim}\cong\mathbb{Z}$.      

\end{ex}

Harrison-Trainor, Melnikov, R.\ Miller, and Montalb\'{a}n \cite{HTM^3} defined a very general kind of interpretation of $\mathcal{A}$ in $\mathcal{B}$ guaranteeing that $\mathcal{A}\leq_s\mathcal{B}$.  The tuples in $\mathcal{B}$ that represent elements of $\mathcal{A}$ have no fixed arity.  Recall that a \emph{computable $\Sigma_1$ formula} is a c.e.\ disjunction of finitary existential formulas.  We will use $\Sigma^c_\alpha$ to denote the computable infinitary $\Sigma_\alpha$ formulas, and the same for $\Delta^c_\alpha$ and $\Pi^c_\alpha$.  Normally, we consider formulas with a fixed tuple of variables.  However, following \cite{HTM^3}, we will consider relations $R\subseteq\mathcal{B}^{<\omega}$ in our interpretations, and we will say that such a relation $R$ is defined in $\mathcal{B}$ by a $\Sigma_1^c$ formula that when there is a computable sequence of $\Sigma_1^c$ formulas $\varphi_n(\bar{x}_n)$ defining $R\cap\mathcal{B}^n$.  Our $\Sigma_1^c$ definition of $R$ is $\bigvee_n \varphi_n(\bar{x}_n)$.  A relation $R$ defined in this way is c.e.\ relative to $\mathcal{B}$.      

\begin{ex}

The dependence relation on tuples in a $\mathbb{Q}$-vector space is a familiar relation with no fixed arity.  It is defined by a $\Sigma_1^c$ formula $\bigvee_n\varphi_n(\bar{x}_n)$ of the kind that we use for effective interpretations.  We let $\varphi_n(\bar{x}_n) = \bigvee_\lambda \lambda(\bar{x}_n) = 0$, where $\lambda$ ranges over the non-trivial rational linear combinations of $\bar{x}_n = (x_1,\ldots,x_n)$.     

\end{ex}        

In a given structure $\mathcal{B}$, we say that a relation $R$ is \emph{$\Delta_1^c$-definable over $\bar{c}$} if $R$ and the complementary relation $\neg{R}$ are each defined by $\Sigma^c_1$ formulas, with parameters $\bar{c}$.                 

\begin{defn}

A structure $\mathcal{A} = (A,R_i)$ is \emph{effectively interpreted} in a structure $\mathcal{B}$ if there is a set $D\subseteq\mathcal{B}^{<\omega}$, $\Sigma^c_1$-definable over $\emptyset$, and there are relations $\sim$ and $R_i^*$ on $D$, $\Delta_1^c$-definable over $\emptyset$, such that $(D,R_i^*)/_\sim\cong\mathcal{A}$.   

\end{defn} 

Above, we described Marker's Turing computable embedding of directed graphs in undirected graphs, and we saw there are uniform finitary existential formulas that in the output directed graph a set $D$ and relations $\pm R^*$ such that $(D,R^*)$ is isomorphic to the input undirected graph.  Friedman and Stanley's original embedding of graphs in fields involved a uniform interpretation by means of $\Sigma^c_3$ formulas.  A more recent embedding of graphs in fields, due to R. Miller, Poonen, Schoutens, and Shlapentokh \cite{MPSS}, gives a uniform effective interpretation.

\bigskip

Harrison-Trainor, Melnikov, R. Miller, and Montalb\'{a}n \cite{HTM^3} defined a second notion.         

\begin{defn}

A \emph{computable functor} from $\mathcal{B}$ to $\mathcal{A}$ is a pair of Turing operators $(\Phi,\Psi)$ such that  $\Phi$ takes copies of $\mathcal{B}$ to copies of $\mathcal{A}$ and $\Psi$ takes isomorphisms between copies of $\mathcal{B}$ to isomorphisms between the corresponding copies of $\mathcal{A}$, so as to preserve identity and composition. 

\end{defn} 

The main result from \cite{HTM^3} gives the equivalence of the two notions.

\begin{thm}
\label{equiv}

For structures $\mathcal{A}$ and $\mathcal{B}$, $\mathcal{A}$ is effectively interpreted in $\mathcal{B}$ iff there is a computable functor $\Phi,\Psi$ from $\mathcal{B}$ to $\mathcal{A}$.

\end{thm}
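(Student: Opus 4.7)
The plan is to prove the two directions separately: the forward direction (from an effective interpretation to a computable functor) is mostly bookkeeping, while the reverse direction (from a computable functor to an interpretation) is where the substantive work lies.

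For the forward direction, assume the interpretation data $(D, \sim, R_i^*)$ are given. To build the Turing operator $\Phi$, on a copy $\widehat{\mathcal{B}}$ of $\mathcal{B}$ I enumerate $D \cap \widehat{\mathcal{B}}^{<\omega}$ using the $\Sigma_1^c$ definition, select the first tuple in each $\sim$-class as its representative, and compute the atomic diagram of the quotient using the $\Delta_1^c$ definitions of the $R_i^*$, of $\sim$, and of their complements; the output is a copy of $\mathcal{A}$ by hypothesis. To build $\Psi$, an isomorphism $f \colon \widehat{\mathcal{B}}_1 \to \widehat{\mathcal{B}}_2$ preserves the $\Sigma_1^c$-definable $D$, $\sim$, and $R_i^*$, so it descends to an isomorphism $\Psi(f) \colon \Phi(\widehat{\mathcal{B}}_1) \to \Phi(\widehat{\mathcal{B}}_2)$ computed by enumerating $\sim$-classes in $\widehat{\mathcal{B}}_2$ until the class of $f(\bar{b})$ appears, for $\bar{b}$ a representative of the input element. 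Identity and composition are preserved by construction.

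For the reverse direction, the strategy exploits the finite-use property of Turing operators: each atomic fact about $\Phi(\widehat{\mathcal{B}})$ uses only finitely many bits of $D(\widehat{\mathcal{B}})$, and each value $\Psi(f)(a)$ depends on only finitely many values of $f$. I would define $D \subseteq \mathcal{B}^{<\omega}$ to consist of tuples $\bar{b}$ for which there exist a natural number $a$ and a finite atomic sub-diagram $\sigma$ in the variables of $\bar{b}$ such that in every copy $\widehat{\mathcal{B}}$ extending $\sigma$ on $\bar{b}$, $a$ belongs to $|\Phi(\widehat{\mathcal{B}})|$, and such that $\Psi$ applied to any isomorphism between copies that preserves $\sigma$ fixes $a$ whenever it fixes $\bar{b}$. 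The relation $\sim$ identifies tuples that mark the same element under a common finite witness, and each $R_i^*$ is pulled back from the relations of $\Phi(\mathcal{B})$ via the marking correspondence. Each such condition is captured by a single finite configuration in $\mathcal{B}$, yielding $\Sigma_1^c$ descriptions.

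The main obstacle is ensuring that the complements of $\sim$ and of each $R_i^*$ are also $\Sigma_1^c$, so that the definitions are genuinely $\Delta_1^c$. For this I would use the full categorical content of $\Psi$: if two marked tuples represent distinct elements, functoriality lets one construct a copy $\widehat{\mathcal{B}}$ and an isomorphism under which the marked elements of $\Phi(\widehat{\mathcal{B}})$ become visibly distinct natural numbers, and the finite use of $\Psi$ then turns this into a finite witness of non-equivalence; the same device handles negated atomic relations. Finally, since every element of $\Phi(\mathcal{B})$ is the output of a finite computation, it is marked by at least one tuple, so $(D, R_i^*)/{\sim} \cong \mathcal{A}$, completing the interpretation.
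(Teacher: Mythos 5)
First, a point of comparison: the paper does not actually prove this theorem --- it is imported verbatim from Harrison-Trainor, Melnikov, Miller, and Montalb\'{a}n \cite{HTM^3} --- so there is no in-paper proof to measure you against, only the original. Your forward direction is correct and is the standard argument: $D$ is c.e.\ in any copy, and $\sim$ and the $R_i^*$ are decidable relative to any copy on tuples from $D$ (wait for one of the two $\Sigma^c_1$ definitions to fire), so one can choose representatives and read off the quotient diagram; and since $D$, $\sim$, $R_i^*$ are definable without parameters, any isomorphism of copies respects them and descends to the quotients, giving $\Psi$.

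The reverse direction has a genuine gap exactly where you locate the difficulty, and also one step earlier. Your definition of $D$ demands that \emph{every} copy extending $\sigma$ put $a$ into $|\Phi(\widehat{\mathcal{B}})|$ and that $\Psi$ fix $a$ for \emph{every} isomorphism preserving $\sigma$ and fixing $\bar{b}$. The first universal quantifier is harmless: by finite use, if $\Phi^{\sigma}$ already declares $a$ to be in the domain, it does so in every extension, so that clause reduces to an existential statement about a halting computation. But the second clause quantifies over all pairs of copies and all isomorphisms between them --- uncountably many infinite objects --- and finite use of $\Psi$ does not collapse it to a finite check, because \emph{which} finite part of $f$ the computation $\Psi(f)(a)$ consults depends on $f$. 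As written this clause is at best co-c.e.\ relative to the copy, so your $D$ is not visibly $\Sigma^c_1$; the same problem recurs, more acutely, for $\neg\sim$ and $\neg R_i^*$, where you assert that functoriality plus finite use ``turns this into a finite witness'' without saying how. Closing this is the actual content of the theorem in \cite{HTM^3}: one must show it suffices to test $\Phi$ and $\Psi$ against a countable, locally finite family of enumerations (finite permutations of $\omega$ fixing $\bar{b}$, extended generically), and one must then convert ``automorphism-invariant and uniformly c.e.\ in every copy'' into ``definable by a parameter-free $\Sigma^c_1$ formula,'' which is itself an effective definability theorem (or, as in \cite{HTM^3}, a direct construction of the formulas from the computation trees of $\Phi$ and $\Psi$). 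Neither step is routine, and neither appears in your sketch.
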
 

\begin{cor}

If $\mathcal{A}$ is effectively interpreted in $\mathcal{B}$, then $\mathcal{A}\leq_s\mathcal{B}$.

\end{cor}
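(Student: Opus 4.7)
The corollary is essentially immediate once one unwinds the definitions, and indeed one of the two directions of Theorem \ref{equiv} already gives exactly what is needed: from a computable functor $(\Phi,\Psi)$ one simply forgets $\Psi$ and observes that $\Phi$ is, by definition, a Turing operator taking copies of $\mathcal{B}$ to copies of $\mathcal{A}$, which is precisely a Medvedev reduction. So the plan is to invoke Theorem \ref{equiv} and read off the first component of the functor. For a self-contained argument, I would instead build the reduction directly out of the interpreting formulas, which also serves as a warm-up for the harder direction of Theorem \ref{equiv}.

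For the direct construction, fix a copy $\mathcal{B}'$ of $\mathcal{B}$ on $\omega$, presented by its atomic diagram. Since $D\subseteq \mathcal{B}^{<\omega}$ is $\Sigma_1^c$-definable over $\emptyset$, I can c.e.-enumerate (uniformly in the oracle $D(\mathcal{B}'))$ the tuples $\bar{b}\in D$; here I use the convention from the excerpt that $D$ is given by a computable sequence of $\Sigma_1^c$ formulas $\varphi_n(\bar{x}_n)$, one per arity. Since $\sim$ and each $R_i^*$ are $\Delta_1^c$-definable, that is, both they and their complements are $\Sigma_1^c$, the relations $\sim$ and $R_i^*$ are decidable relative to $\mathcal{B}'$.

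Now I compute a copy $\mathcal{A}'$ of $\mathcal{A}$ on $\omega$ as follows. Process enumerated tuples $\bar{b}_0,\bar{b}_1,\dots$ from $D$ in stages; at each stage, decide (using the oracle) whether the newly enumerated $\bar{b}_s$ is $\sim$-equivalent to any already-chosen representative $\bar{b}_{s_j}$. If not, declare $\bar{b}_s$ to be the representative of a new $\sim$-class and assign it the next unused natural number; if so, record the identification. To construct the atomic diagram of $\mathcal{A}'$, for each $R_i$ of arity $k$ and each $k$-tuple of natural numbers already assigned, look up the corresponding representatives $\bar{b}_{s_1},\dots,\bar{b}_{s_k}$ and ask the oracle whether $R_i^*(\bar{b}_{s_1},\dots,\bar{b}_{s_k})$ holds; this computation halts because $R_i^*$ is $\Delta_1^c$. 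The result is the atomic diagram of a structure isomorphic to $(D,R_i^*)/_\sim\cong\mathcal{A}$, computed uniformly from $D(\mathcal{B}')$.

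The only mildly delicate point is bookkeeping with the variable arities of tuples in $D$: one must interleave the enumerations of the $\varphi_n$'s and treat tuples of different lengths on an equal footing, since a single $\sim$-class can contain tuples of different arities. Once this is handled, the Turing operator we have described witnesses $\mathcal{A}\leq_s\mathcal{B}$, and the same operator works for every copy of $\mathcal{B}$, since the $\Sigma_1^c$ and $\Delta_1^c$ definitions are fixed.
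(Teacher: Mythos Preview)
Your proposal is correct, and your first approach---invoke Theorem \ref{equiv} and take the first component $\Phi$ of the resulting computable functor---is exactly the paper's proof, stated in a single sentence. The direct construction you add is also valid and is essentially the easy direction of Theorem \ref{equiv} written out; it is more than the paper supplies here, but nothing in it is wrong.
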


\begin{proof}

We get a Medvedev reduction by taking the first half $\Phi$ of the computable functor $\Phi,\Psi$. 
\end{proof}

Kalimullin \cite{K} showed that the converse of the corollary fails.  We may have a Turing operator $\Phi$ taking copies of $\mathcal{B}$ to copies of $\mathcal{A}$ without having a Turing operator $\Psi$ taking triples $(\mathcal{B}_1,\mathcal{B}_2,f)$ to $g$, where $\mathcal{B}_1,\mathcal{B}_2$ are copies of $\mathcal{B}$ and $\mathcal{B}_1\cong_f\mathcal{B}_2$ and $\Phi(\mathcal{B}_1)\cong_g\Phi(\mathcal{B}_2)$.  

\bigskip

In the proof of Theorem \ref{equiv}, it is important that the set $D$ in the interpretation consist of tuples from $\mathcal{B}$ of arbitrary arity.  The same is true in the proof of the following.   

\begin{prop}\label{pr:comp-effective-int}

If $\mathcal{A}$ is computable, then $\mathcal{A}$ is effectively interpreted in all structures $\mathcal{B}$.

\end{prop}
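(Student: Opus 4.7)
The plan is to define the interpretation in a trivial way that essentially ignores $\mathcal{B}$, exploiting the freedom in the definition to use tuples of arbitrary arity. Since the universe of $\mathcal{A}$ is $\omega$, I would use the length of a tuple $\bar b\in\mathcal{B}^{<\omega}$ to code an element of the universe of $\mathcal{A}$: a tuple of length $n+1$ will represent the element $n\in\omega$.

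Concretely, I would let $D=\bigcup_{n\geq 1}\mathcal{B}^n$, so that $D\cap\mathcal{B}^0=\emptyset$ and $D\cap\mathcal{B}^n=\mathcal{B}^n$ for $n\geq 1$. The corresponding sequence of $\Sigma^c_1$ formulas is just $\varphi_0=\bot$ and $\varphi_n=(x_1=x_1)$ for $n\geq 1$, which is clearly computable. For the congruence $\sim$, I would set $\bar b\sim\bar b'$ iff $\bar b$ and $\bar b'$ have the same length; the formula defining $\sim\cap(\mathcal{B}^n\times\mathcal{B}^m)$ is $\top$ if $n=m$ and $\bot$ otherwise, and similarly for $\neg\sim$ with the roles swapped. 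Both sequences are clearly computable, so $\sim$ is $\Delta^c_1$-definable over $\emptyset$.

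For each $k$-ary symbol $R_i$ of $\mathcal{A}$, I would define $R_i^*$ on tuples of arities $(n_1,\dots,n_k)$ by declaring $R_i^*(\bar b_1,\dots,\bar b_k)$ to hold iff $\mathcal{A}\models R_i(n_1-1,\dots,n_k-1)$, using the formula $\top$ or $\bot$ accordingly (and symmetrically for $\neg R_i^*$). The key point is that because $\mathcal{A}$ is \emph{computable}, we can decide $R_i(n_1-1,\dots,n_k-1)$ uniformly in $(n_1,\dots,n_k)$, and so the required computable sequence of $\Sigma^c_1$ formulas for $R_i^*$ and its complement exists. The quotient map sending the $\sim$-class of any length-$(n{+}1)$ tuple to $n\in\omega$ is then, by construction, an isomorphism $(D,R_i^*)/{\sim}\,\cong\,\mathcal{A}$.

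There is no real obstacle beyond noticing that the definition of effective interpretation allows $D$ to contain tuples of unbounded arity, so that a single element of $\mathcal{B}$ is not needed to ``name'' elements of $\mathcal{A}$; the only parameter of the construction, the arity $n$, is absorbed into the structure of $D$ itself. The mild point worth flagging is the assumption that $\mathcal{B}$ is nonempty (so that each $\mathcal{B}^n$ is nonempty and each $\sim$-class is represented); this is implicit in our convention that structures have universe~$\omega$.
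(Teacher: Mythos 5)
Your proposal is correct and is essentially the paper's own proof: the paper also takes $D$ to be the set of all tuples (there $D=\mathcal{B}^{<\omega}$, coding $n$ by tuples of length $n$ rather than $n+1$), defines $\sim$ as equality of length, and reads off $R_i^*$ from the lengths using the computability of $\mathcal{A}$. The only differences are cosmetic (the shift by one to exclude the empty tuple, and your more explicit write-out of the defining sequences of formulas).
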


\begin{proof}

Let $D = \mathcal{B}^{<\omega}$.  We let $\bar{b}\sim\bar{c}$ if $\bar{b},\bar{c}$ are tuples of the same length.  For simplicity, suppose $\mathcal{A} = (\omega,R)$, where $R$ is binary.  We define $R^*$ such that $R^*(\bar{b},\bar{c})$ holds iff $\mathcal{A}\models R(m,n)$, where $m$ is the length of $\bar{b}$ and $n$ is the length of $\bar{c}$.  
\end{proof}

\subsection{Interpretations by more general formulas}

We may consider interpretations of $\mathcal{A}$ in $\mathcal{B}$, where $D$, $\pm\sim$, and 
$\pm R_i^*$ are defined in $\mathcal{B}$ by $\Sigma^c_2$ formulas, and we have 
$(D,(R_i^*)_{i\in\omega})/_{\sim}\cong\mathcal{A}$.  There is a notion of \emph{jump} of a structure, which was independently rediscovered by a number of researchers, see, e.g. \cite{Baleva06}, \cite{SS}, \cite{Mon}, and \cite{Stukachev}.
Here we follow Montalb\'an's approach \cite{Mon, Mo}, which is the most suitable for our purposes.  The \emph{jump} of $\mathcal{A}$ is a structure $\mathcal{A}' = (\mathcal{A},(R_i)_{i\in\omega})$, where $R_i$ is the relation defined in $\mathcal{A}$ by the $i^{th}$ $\Sigma^c_1$ formula.  We can iterate the jump, forming $\mathcal{A}'' = (\mathcal{A}')'$, etc.  For our purposes, the following facts about jumps suffice.  

\begin{enumerate}

\item  For a structure $\mathcal{A}$, the \emph{jump} is a structure $\mathcal{A}'$ such that the relations defined in $\mathcal{A}'$ by $\Sigma^c_1$ formulas are just those defined in $\mathcal{A}$ by $\Sigma^c_2$ formulas.  

\item  For a structure $\mathcal{A}$, the jump structure $\mathcal{A}'$ is computed by $D(\mathcal{A})'$.  

\item  The relations defined in $\mathcal{A}''$ by $\Sigma^c_1$ formulas are just those defined in $\mathcal{A}$ by $\Sigma^c_3$ formulas.

\end{enumerate}              

Harrison-Trainor, R. Miller, and Montalb\'{a}n \cite{HTM^2} proved the analogue of the result 
from \cite{HTM^3} in which the interpretations are defined by formulas of $L_{\omega_1\omega}$, and the functors are Borel.  Again for an interpretation of $\mathcal{A}$ in $\mathcal{B}$, the set of tuples in $\mathcal{B}$ that represent elements of $\mathcal{B}$ may have arbitrary arity.  If  
$R\subseteq\mathcal{B}^{<\omega}$, and we have a countable sequence of $L_{\omega_1\omega}$-formulas $\varphi_n(\bar{x}_n)$ defining $R\cap\mathcal{B}^n$, then we refer to $\bigvee_n\varphi_n(\bar{x}_n)$ as an $L_{\omega_1\omega}$ definition of $R$.       

\begin{thm}

A structure $\mathcal{A}$ is interpreted in $\mathcal{B}$ using $L_{\omega_1\omega}$-formulas iff there is a Borel functor $(\Phi,\Psi)$ from $\mathcal{B}$ to $\mathcal{A}$. 

\end{thm}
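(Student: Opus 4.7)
The plan is to prove both directions as Borel analogues of the computable equivalence in Theorem~\ref{equiv}, replacing Turing operators by Borel maps and $\Sigma^c_1$-definability by $L_{\omega_1\omega}$-definability. The central technical input is the relational form of L\'{o}pez-Escobar's theorem, due to Vaught: for each arity $n$, any relation $R \subseteq \mathcal{B}^n$ whose characteristic function on $Mod(L)$ is Borel and invariant under the natural $S_\infty$ action is defined in $\mathcal{B}$ by an $L_{\omega_1\omega}$ formula. Assembling such formulas across $n$ then produces a single $L_{\omega_1\omega}$ definition of a relation on $\mathcal{B}^{<\omega}$ in the sense of the excerpt.

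For the forward direction, assume $L_{\omega_1\omega}$-formulas $\varphi_D, \varphi_\sim, \varphi_{\neg\sim}, \varphi_{R_i^*}, \varphi_{\neg R_i^*}$ witness the interpretation. I would define $\Phi$ on a copy $\tilde{\mathcal{B}}$ by Borel-evaluating these formulas to produce $D(\tilde{\mathcal{B}})$, $\sim_{\tilde{\mathcal{B}}}$, and $R_i^*(\tilde{\mathcal{B}})$, forming the quotient, and selecting from each $\sim$-class a Borel-uniform representative (say, the lex-least tuple in $\omega^{<\omega}$) to land on universe $\omega$. Because the formulas carry no parameters, any isomorphism $g : \tilde{\mathcal{B}}_1 \to \tilde{\mathcal{B}}_2$ preserves satisfaction, descends to an isomorphism of the quotients, and composes with the Borel selections to give $\Psi(g)$. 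Borelness of $\Phi$ and $\Psi$ follows since $L_{\omega_1\omega}$-satisfaction is Borel in the structure, and functoriality is immediate.

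For the reverse direction, suppose $(\Phi, \Psi)$ is a Borel functor. I would use $\Psi$ to construct, for each $n$, a Borel, $S_\infty$-invariant set $D_n \subseteq \mathcal{B}^n$ of tuples that \emph{name} elements of $\Phi(\mathcal{B})$, together with a Borel naming map from $D_n$ to $\Phi(\mathcal{B})$. Concretely, given $\bar{b} \in \tilde{\mathcal{B}}^n$ with distinct entries, let $g : \omega \to \omega$ be the canonical bijection sending $\bar{b}$ to $(0,1,\ldots,n-1)$ and preserving order on the complement, push $\tilde{\mathcal{B}}$ forward along $g$ to obtain a copy $\tilde{\mathcal{B}}'$, and read off $\Psi(g)^{-1}(k) \in \Phi(\tilde{\mathcal{B}})$ for a target $k$ encoded in $\bar{b}$ (say, $k$ is the first coordinate and the remaining coordinates determine the canonical bijection). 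Define $\sim$ and the $R_i^*$ on $D = \bigcup_n D_n$ as the pullbacks of equality and the $\mathcal{A}$-relations along this naming. Functoriality, namely $\Psi(\mathrm{id}) = \mathrm{id}$ and $\Psi(g \circ h) = \Psi(g) \circ \Psi(h)$, forces these relations to be $S_\infty$-invariant, and Vaught then converts them into $L_{\omega_1\omega}$ formulas over $\emptyset$.

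The main obstacle is ensuring that the naming is well-defined and Borel, and that its range covers all of $\Phi(\mathcal{B})$ so that the quotient $(D, R_i^*)/\sim$ is genuinely isomorphic to $\mathcal{A}$. Surjectivity is exactly where tuples of arbitrary arity become essential: an element of $\mathcal{A}$ may only be recoverable via longer and longer tuples of $\mathcal{B}$, as Proposition~\ref{pr:comp-effective-int} illustrates in an extreme case. Once well-definedness and surjectivity are secured, the passage from Borel, $S_\infty$-invariant relations to $L_{\omega_1\omega}$-formulas via the L\'{o}pez-Escobar/Vaught transfer is standard.
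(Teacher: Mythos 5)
The paper does not prove this theorem: it is quoted as background from Harrison-Trainor, R.\ Miller, and Montalb\'{a}n \cite{HTM^2}, so there is no in-paper argument to compare against. Measured against the actual proof in \cite{HTM^2}, your outline has the right architecture --- quotient construction for the forward direction, a naming relation extracted from the functor plus the Vaught/L\'{o}pez-Escobar transfer for the reverse --- and the forward direction is essentially complete as you describe it (modulo routine re-indexing of the quotient onto $\omega$).

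The reverse direction, however, has a genuine gap, and it sits exactly at the point you label ``the main obstacle'' and then set aside: that obstacle \emph{is} the hard content of the theorem. As literally defined, your naming relation is not $S_\infty$-invariant, so Vaught's theorem does not apply. Concretely, let $h:\tilde{\mathcal{B}}_1\to\tilde{\mathcal{B}}_2$ be an isomorphism, and let $g_1,g_2$ be the canonical bijections attached to $\bar{b}$ and to $h(\bar{b})$. The induced isomorphism $g_2\circ h\circ g_1^{-1}$ between the two pushforward copies is the identity on $\{0,\ldots,n-1\}$ but not on the rest of $\omega$, so nothing forces $\Psi(g_2\circ h\circ g_1^{-1})$ to fix the target $k$; consequently the pulled-back relations $\sim$ and $R_i^*$ need not be invariant under the logic action. (Taking $k$ to be the literal first coordinate of $\bar{b}$ makes this worse, since that numerical value is not preserved by isomorphisms at all.) The missing lemma --- which is where \cite{HTM^2}, and \cite{HTM^3} in the effective case, do their real work --- is that for each element $a$ of $\Phi(\tilde{\mathcal{B}})$ there is a long enough tuple $\bar{b}$ that \emph{decides} $a$, meaning that $\Psi(h)$ fixes $a$ for every isomorphism $h$ fixing $\bar{b}$ pointwise; this is proved by a genericity/forcing argument (or a use-of-computation argument in the effective setting), and it simultaneously yields the well-definedness, the invariance needed for the Vaught transfer, and the surjectivity of the naming. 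Without that lemma, the relations you feed into the L\'{o}pez-Escobar/Vaught machinery are not invariant and the argument does not close.
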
 

\section{Interpreting graphs in linear orderings}

As we have seen, any structure can be effectively interpreted in a graph.  Linear orderings do not have so much interpreting power.  To show this, we use the following result of Linda Jean Richter \cite{R}.

\begin{prop} [Richter]
\label{Richter}

For a linear ordering $L$, the only sets computable in all copies of $L$ are the computable sets.

\end{prop}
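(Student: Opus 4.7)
The plan is a finite-extension argument. Assume for contradiction that $X$ is a non-computable set Turing computable from every copy of $L$; we will construct a specific copy $L^*$ of $L$ from which no Turing functional computes $X$, contradicting the hypothesis. Fix one copy $L_0$ of $L$ with universe $\omega$. We build $L^*$ as the pullback of $L_0$ along a permutation $\pi\colon\omega\to\omega$: namely $L^* = (\omega,<^*)$ with $m <^* n \iff \pi(m) <_{L_0} \pi(n)$. Any such $\pi$ gives $L^*\cong L_0\cong L$, so we need only produce $\pi$ as the union of a chain of finite partial injections $\pi_0 \subseteq \pi_1 \subseteq \cdots$ whose total union is a bijection of $\omega$.

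The construction uses $X$ as an oracle. At odd stages we extend $\pi_s$ in the usual back-and-forth way so that every natural number eventually enters $\mathrm{dom}(\pi)$ and $\mathrm{ran}(\pi)$. At stage $s=2e$ we try to meet $R_e\colon \Phi_e^{L^*}\neq X$. Writing $\Phi_e^\pi(n)\downarrow$ to mean that the computation halts using only oracle queries about pairs from $\mathrm{dom}(\pi)$ (which $\pi$ answers via $L_0$), we ask: does there exist a finite extension $\pi\supseteq \pi_{2e}$ and an $n$ with $\Phi_e^\pi(n)\downarrow\neq X(n)$? If so, set $\pi_{2e+1}=\pi$; any further extension of $\pi$ preserves the computation, so $\Phi_e^{L^*}(n)\neq X(n)$ and $R_e$ is met. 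Otherwise, set $\pi_{2e+1}=\pi_{2e}$.

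The crux is verifying $R_e$ in this second case. By the ``no'' hypothesis, for every finite extension $\pi\supseteq \pi_{2e}$ and every $n$, $\Phi_e^\pi(n)\downarrow$ implies $\Phi_e^\pi(n)=X(n)$. Consider the procedure that, given $n$ and the finite parameter $\pi_{2e}$, searches for any finite partial injection $\pi\supseteq\pi_{2e}$ with $\Phi_e^\pi(n)\downarrow$ and outputs the value found. If the search terminates for every $n$, it is a computable procedure for $X$, contradicting non-computability. So for some $n$ the search diverges, meaning no finite extension of $\pi_{2e}$ makes $\Phi_e(n)$ converge; in particular $\Phi_e^{L^*}(n)\uparrow$, and $R_e$ holds by divergence. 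Either way $R_e$ is met, so $\Phi_e^{L^*}\neq X$ for every $e$, contradicting that some $e$ must give $\Phi_e^{L^*}=X$.

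The main obstacle is the reasoning in the second case: one must recognize that an inability to \emph{actively} diagonalize against $\Phi_e$ either already yields a computable procedure for $X$ (contradicting non-computability) or forces $\Phi_e^{L^*}$ to diverge on some input, securing $R_e$ for free. A secondary ingredient, used silently, is that any finite partial injection of $\omega$ extends to a total bijection, so presenting $L^*$ as the pullback of a fixed copy $L_0$ along a generic permutation imposes no nontrivial extendibility constraint on forcing conditions---this simple fact is what allows the finite-extension argument to succeed once a baseline copy $L_0$ is in hand.
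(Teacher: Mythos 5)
Your construction has the right general shape (finite-extension forcing against each $\Phi_e$, with the ``cannot diagonalize $\Rightarrow$ can compute $X$'' dichotomy), but the key step in Case 2 does not go through as written, and it fails exactly where the hypothesis that $L$ is a \emph{linear ordering} has to be used. Your conditions are finite partial injections $\pi$ into a fixed copy $L_0$, and you stipulate that $\Phi_e^\pi$ answers its oracle queries ``via $L_0$.'' Consequently the Case 2 search --- ``look for a finite $\pi\supseteq\pi_{2e}$ with $\Phi_e^\pi(n)\downarrow$ and output the value'' --- must consult $L_0$ to answer those queries, so it is an $L_0$-computable procedure, not a computable one. If $L$ has a computable copy the theorem is trivial anyway; in the interesting case $L_0$ is noncomputable, and ``$X\leq_T L_0$'' is no contradiction at all, since it is part of the hypothesis. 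A sanity check that something must be missing: your argument nowhere uses that $L$ is a linear ordering, so it would prove the same statement for arbitrary structures --- but that is false, e.g., for the daisy graph of Proposition \ref{graph.not.interpreted}.

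The repair is the actual content of Richter's proof. Replace the search over partial injections into $L_0$ by a search over \emph{abstract} finite linear orderings $q$ on finite subsets of $\omega$ extending the order type of the current condition; for such $q$ the computation $\Phi_e^q(n)$ is genuinely computable, since every oracle query about elements of $\mathrm{dom}(q)$ is answered by $q$ itself. One must then control which abstract extensions are \emph{realizable} in $L$ over the current embedding, and here the linear-order structure enters: realizability depends only on the cardinalities (in $\omega\cup\{\infty\}$) of the finitely many intervals of $L$ cut out by the range of the current condition, a fixed finite amount of information that can be hard-wired as a nonuniform parameter into the Case 2 algorithm. With that change the dichotomy runs correctly: either some realizable abstract extension diagonalizes (take it); or some $n$ admits no realizable convergent extension (then $\Phi_e^{L^*}(n)\uparrow$); or else the now genuinely computable search computes $X$, a contradiction. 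Your secondary remark about extending finite injections to bijections is fine and is not where the difficulty lies. (The paper itself only cites Richter \cite{R} for this proposition, so there is no in-text proof to compare against.)
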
 

\begin{prop}  
\label{graph.not.interpreted} 

There is a graph $G$ such that for all linear orderings $L$, $G\not\leq_s L$.

\end{prop}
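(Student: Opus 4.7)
The plan is to invoke Richter's proposition (Proposition \ref{Richter}) together with a graph that uniformly codes a non-computable set into every one of its copies. Fix any non-computable set $X \subseteq \omega$, and construct a graph $G$ so that the question ``is $n \in X$?'' can be decided computably from any copy $\tilde{G}$ of $G$. Then, if $G \leq_s L$, every copy of $L$ would compute a copy of $G$, and hence compute $X$; this would contradict Richter's proposition, which says that only computable sets are computable in all copies of $L$.

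For the construction, I would take $G$ to be the disjoint union $\bigsqcup_{n \in \omega} C_n$, where $C_n$ is a cycle on $2n+3$ vertices if $n \in X$ and a simple path on $2n+3$ vertices if $n \notin X$. The relevant features of this graph are: (i) each component has a size different from every other component, so components are distinguishable by cardinality alone; (ii) each component is finite, so from any copy $\tilde{G}$ the connected component of a given vertex can be produced in finite time by breadth-first search against the edge diagram; and (iii) a finite connected graph on $2n+3$ vertices is immediately recognizable as either a cycle or a path, once exhibited in full.

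The decoding procedure, uniform across copies of $G$, is then as follows: on input $n$, enumerate the vertices of $\tilde{G}$ in order and, for each one, compute its connected component using $D(\tilde{G})$. Halt at the first vertex whose component has exactly $2n+3$ elements, and output $1$ if that component is a cycle and $0$ if it is a path. This procedure terminates because $G$ contains exactly one component of size $2n+3$, and the smallest index of a vertex in that component is a concrete natural number. Hence $X \leq_T \tilde{G}$ for every copy $\tilde{G}$ of $G$, via a single Turing functional. Assuming $G \leq_s L$ via a Turing operator $\Phi$, then for any copy $\tilde{L}$ of $L$, $\Phi(\tilde{L})$ is a copy of $G$ computable in $\tilde{L}$, so $X$ is computable from $\tilde{L}$. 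Proposition \ref{Richter} then forces $X$ to be computable, contradicting the choice of $X$.

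The only point that needs genuine care is arranging that the decoding procedure be \emph{computable} rather than merely $\Sigma^0_1$: the choice of components of pairwise distinct sizes, together with the choice of two possible shapes (cycle vs.\ path) each of which is recognizable on a finite fragment, is what prevents the search from being unbounded. Any equivalent encoding with the same rigid finite-component structure would serve just as well.
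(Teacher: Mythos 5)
Your overall strategy is exactly the paper's: fix a non-computable set $X$, build a graph every copy of which computes $X$, and apply Proposition \ref{Richter}. But your specific coding graph and decoding procedure have a genuine gap. The step ``compute its connected component by breadth-first search against the edge diagram'' is not a computation that halts relative to $D(\tilde{G})$: a copy $\tilde{G}$ has universe $\omega$ with the vertices of a component scattered arbitrarily, so to certify that you have found \emph{all} neighbors of a vertex (and hence the whole finite component) you must verify non-adjacency to infinitely many other vertices, a $\Pi^0_1$ task. You can enumerate a component, but you can never confirm that it has ``exactly $2n+3$ elements'' and no more. Concretely, from an arbitrary copy your procedure only certifies the positive outcome: ``there is a cycle of length $2n+3$'' is witnessed by finitely much positive data, so $X$ is c.e.\ in every copy; but ``the size-$(2n+3)$ component is a path'' has no finite positive witness (every finite induced subgraph of your $G$ with $C_n$ a path also occurs as a finite induced subgraph of the variant with $C_n$ a cycle, since a union of short paths embeds into a larger component), so the complement of $X$ is only $\Pi^0_1$ in the copy. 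This is not a cosmetic issue with the write-up: by the standard forcing argument one can build copies of your $G$ that do not compute $X$ at all, so the graph itself must be changed, not just the decoding.

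The fix, which is what the paper's ``daisy'' graph accomplishes, is to make \emph{both} answers existentially witnessed: attach to a common center a petal (cycle through the center) of length $2n+3$ if $n\in X$ and of length $2n+4$ if $n\notin X$. Then for every $n$ exactly one of ``there is a cycle of length $2n+3$'' and ``there is a cycle of length $2n+4$'' holds, each is a $\Sigma^0_1$ search over the diagram, and running the two searches in parallel decides $n\in X$ uniformly in any copy. With that replacement your appeal to Proposition \ref{Richter} goes through verbatim.
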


\begin{proof}

Let $S$ be a non-computable set.  Let $G$ be a graph such that every copy computes $S$.  We may take $G$ to be a ``daisy'' graph, consisting of a center node with a ``petal'' of length $2n+3$ if $n\in S$ and $2n+4$ if $n\notin S$.  Now, apply Proposition \ref{Richter}.   
\end{proof} 

The following result, from \cite{K1}, is a lifting of Proposition \ref{Richter}.                     

\begin{prop}
\label{Jump}

For a linear ordering $L$, the only sets computable in all copies of $L'$ (or in the jumps of all copies of $L$) are the $\Delta^0_2$ sets.

\end{prop}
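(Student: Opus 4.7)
The plan is to prove the contrapositive: given $S \notin \Delta^0_2$, one constructs a copy $M$ of $L$ with $S \not\leq_T D(M)'$. This lifts Richter's argument for Proposition~\ref{Richter} by one jump, in the sense that the diagonalization requirements $N_e : \Phi_e^{M} \neq S$ become $N_e : \Phi_e^{D(M)'} \neq S$, and computable forcing is replaced by $\emptyset'$-effective forcing.

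The construction builds $M$ as a union of finite partial isomorphisms $M_s$ of $L$ into $\omega$, with back-and-forth steps that guarantee $\bigcup_s M_s \cong L$. To meet $N_e$, one uses $\emptyset'$ as oracle to search for a finite extension $M^* \supseteq M_s$ (still completable to $L$) and a witness $n$ such that some finite prefix $\tau$ of the jump, consistent with $M^*$, satisfies $\Phi_e^\tau(n) \downarrow \neq S(n)$. Here ``consistent'' means the $1$-bits of $\tau$ record $\Sigma^c_1$-facts already witnessed in $M^*$ and the $0$-bits record $\Sigma^c_1$-facts that the back-and-forth can keep unwitnessed. This search is $\Sigma^0_2$ in $M_s$, hence $\emptyset'$-decidable. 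If it succeeds, one commits to $M^*$ and permanently meets $N_e$. If it fails, then every convergent computation of $\Phi_e$ on any consistent prefix outputs $S(n)$, so $S(n)$ is uniformly $\emptyset'$-computable by enumerating extensions until a convergent prefix appears, contradicting $S \notin \Delta^0_2$.

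The main obstacle is formalizing which $0$-bits of the jump can be preserved under back-and-forth, i.e., which $\Sigma^c_1$-facts one can promise to keep unwitnessed in the final $M$. This amounts to an Ash--Knight-style $\alpha$-system at level $\alpha = 2$ for linear orderings: each partial state must track both the atomic type committed so far and the $\Sigma^c_1$-facts the future construction is responsible for avoiding, all while admitting the back-and-forth moves needed to reach a copy of $L$. For linear orderings the required level-$2$ approximation is available via the standard $\Pi^c_2$ back-and-forth characterization of the isomorphism type, and once this machinery is in place the diagonalization is a one-level lift of Richter's original argument.
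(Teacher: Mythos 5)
First, a remark on the comparison: the paper does not prove this proposition at all --- it is quoted from \cite{K1}, where the argument is indeed a one-jump lift of Richter's forcing argument --- so your overall strategy is the intended one. The genuine gap is in the sentence ``This search is $\Sigma^0_2$ in $M_s$, hence $\emptyset'$-decidable.'' The search ranges over finite extensions $M^*$ that are ``still completable to $L$'' and over jump-prefixes $\tau$ ``consistent with $M^*$,'' where consistency of a $0$-bit means that the corresponding $\Sigma^c_1$ fact can be kept unwitnessed in some completion of $M^*$ to a copy of $L$. Both are properties of the isomorphism type of $L$, and $L$ is an arbitrary linear ordering, not assumed computable or even arithmetical: whether a new point may be inserted between two points already placed depends on the size of the corresponding interval of $L$, and whether the $\Sigma^c_1$ fact ``this interval has at least $k$ points'' can be permanently avoided depends on $L$ in exactly the same way. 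Since in the failure case you must extract a genuine $\emptyset'$-algorithm for $S$ --- one that does not consult $L$ --- you cannot simply quantify over such extensions. The same issue already arises one level down in Proposition \ref{Richter}, and resolving it is where all of the content specific to linear orderings lives.

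Your final paragraph correctly identifies this as ``the main obstacle,'' but then defers it entirely to unspecified Ash--Knight-style machinery, and that deferred step \emph{is} the theorem. A useful sanity check: your outline uses no property of linear orderings outside that step, so as written it would equally ``prove'' that the only sets computable in all jumps of copies of a \emph{graph} are the $\Delta^0_2$ sets --- which is refuted by the daisy graph from the proof of Proposition \ref{graph.not.interpreted}, every copy of which (hence every jump of every copy of which) computes the coded set $S$, for arbitrary $S$. What has to be proved, and what \cite{K1} proves, is a combinatorial lemma saying roughly that over a finite condition in a linear ordering the $\Sigma^c_1$ facts that can be forced true or permanently avoided are governed by finitely many interval-size parameters, so that the forcing question becomes $\emptyset'$-decidable after fixing a finite amount of non-uniform information about $L$ (harmless, since finitely many bits do not move $S$ out of $\Delta^0_2$). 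A smaller but real omission in the same step: in the failure case you also need that for every $n$ \emph{some} consistent prefix yields convergence; if not, you could instead arrange $\Phi_e^{D(M)'}(n)\uparrow$ and meet $N_e$ that way, but ``forcing divergence'' is again a commitment about the entire future of the construction and requires the same lemma.
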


This yields a lifting of Proposition \ref{graph.not.interpreted}.    

\begin{prop}  

There is a graph $G$ such that for all linear orderings $L$, $G\not\leq_s L'$.  

\end{prop}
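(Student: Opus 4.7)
The plan is to imitate the proof of Proposition~\ref{graph.not.interpreted} verbatim, replacing Richter's theorem (Proposition~\ref{Richter}) with its lifting Proposition~\ref{Jump}. The previous proposition rules out $G\leq_s L$ by choosing a noncomputable set $S$ and a daisy graph whose every copy computes $S$; since Richter says every set computable in all copies of a linear ordering $L$ is computable, this is a contradiction. For the jump version, the only change is that ``computable'' gets replaced by ``$\Delta^0_2$''.

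In detail, I would start by fixing a set $S$ that is \emph{not} $\Delta^0_2$, for instance $S = \emptyset''$, or any set of Turing degree $\not\leq \mathbf{0}'$. Then I would form the daisy graph $G$ associated to $S$ exactly as in Proposition~\ref{graph.not.interpreted}: a single center vertex with, for each $n$, a single attached path (``petal'') of length $2n+3$ if $n\in S$ and of length $2n+4$ if $n\notin S$. I would note that every copy of $G$ computes $S$: given the atomic diagram of a copy, one can effectively identify the center (the unique vertex of degree $\geq 3$), traverse each petal to its degree-one endpoint to read off its length, and compute $S(n)$ from the parity of the length of the petal of length in $\{2n+3,2n+4\}$.

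Now suppose for contradiction that $G\leq_s L'$ for some linear ordering $L$, witnessed by a Turing operator $\Gamma$. Given any copy $\mathcal{L}$ of $L$, its jump $\mathcal{L}'$ is a copy of $L'$, and $\Gamma(\mathcal{L}')$ is a copy of $G$, from which we computably decode $S$. Hence $S$ is computable in every copy of $L'$, so by Proposition~\ref{Jump} $S$ is $\Delta^0_2$, contradicting the choice of $S$. This gives the desired graph $G$.

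There is no real obstacle here: the two ingredients are already assembled in the paper. The only mild thing to verify is the uniform computation of $S$ from any copy of $G$, but that is precisely the content of the daisy graph construction already used in Proposition~\ref{graph.not.interpreted}, so no new argument is required.
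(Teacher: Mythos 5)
Your proposal is correct and matches the paper's proof exactly: choose a non-$\Delta^0_2$ set $S$, take the daisy graph $G$ coding $S$ so that every copy of $G$ computes $S$, and apply Proposition~\ref{Jump} in place of Richter's theorem. The extra details you supply (decoding $S$ from a copy of the daisy graph, and the explicit contradiction via the Turing operator) are just an unpacking of the same one-line argument the paper gives.
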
  

\begin{proof}

Let $S$ be a non-$\Delta^0_2$ set.  Let $G$ be a graph such that every copy computes $S$.  Then apply Proposition \ref{Jump}.    
\end{proof}

The pattern above does not continue.  The following is well-known (see Theorem 9.12 \cite{AK}).

\begin{prop}
\label{two.jumps}

For any set $S$, there is a linear ordering $L$ such that for all copies of $L$, the second jump computes $S$.  

\end{prop}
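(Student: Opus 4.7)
Plan. The strategy is to construct $L$ so that the characteristic function of $S$ is $\Delta^c_3$-definable in $L$, uniformly in $n$, without parameters. By facts (2) and (3) listed above for jumps, a $\Delta^c_3$-definable relation in $L$ is $\Delta^c_1$-definable in $L''$, and $L''$ is computed by $D(L^*)''$ for every copy $L^*$; so such a relation is computable from $D(L^*)''$, giving $S \leq_T D(L^*)''$ as required.

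Construction. Given $S$, I would decompose $L$ as an ordered sum $L = \sum_{n\in\omega} B_n$, where the $n$-th block $B_n$ has one of two order types $T_0$, $T_1$ depending on whether $n\in S$. A convenient pair is $T_0 = \omega$ and $T_1 = \omega + \omega$, which differ by the presence of a non-minimum element having no immediate predecessor. To align the blocks uniformly, I would interleave marker blocks $M_n$ between successive $B_n$, chosen so that the $n$-th marker is uniquely recognizable (for instance, a finite chain whose size depends on $n$) and does not coalesce with its neighbors under condensation.

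Decoding. Fix a copy $L^*$. The condensation relation $a \sim b \iff$ the interval $[a,b]$ is finite is $\Sigma^c_2$-definable in any linear ordering (one quantifies existentially over a finite listing of the interval), hence computable from $D(L^*)''$. Enumerating condensation classes in order yields the skeleton of $L^*$; the markers are singled out by their finite size; and between consecutive markers the block $B_n$ produces either a single infinite condensation class (if $B_n\cong\omega$) or two (if $B_n\cong\omega+\omega$). Reading off this pattern uniformly in $n$ recovers $S(n)$, and the question ``$B_n\cong T_1$?'' is expressible by matched $\Sigma^c_3$ and $\Pi^c_3$ formulas over $L$, so $S$ is $\Delta^c_3$-definable in $L$.

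Main obstacle. The technical work lies in arranging that (i) the markers $M_n$ remain in condensation classes disjoint from the $B_n$ they separate, and (ii) the distinguishing property ``$B_n \cong T_1$'' is expressible at matched $\Sigma^c_3/\Pi^c_3$ complexity uniformly in $n$, not merely $\Sigma^c_3$. Naive attempts can fail on either count: markers inserted between $\omega$-type blocks tend to coalesce with the block above them under condensation, and a clumsy choice of $T_0, T_1$ can make one direction only semi-decidable at the second-jump level, leaving $S$ merely c.e.\ rather than computable from $D(L^*)''$. The classical Ash-Knight construction (Theorem 9.12 of their book) resolves these details with an appropriate choice of markers and order types, which is why the proposition is quoted here as well known.
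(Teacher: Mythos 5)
Your overall reduction logic (a relation that is $\Delta^c_3$ over $L$ is computable from $D(L^*)''$ for every copy $L^*$) is fine, but the construction you instantiate does not work, and the obstacle you flag at the end is not a technicality --- it is fatal to your specific choice. If the markers $M_n$ are finite chains and the blocks $B_n$ are of type $\omega$ or $\omega+\omega$, then each $M_n$ is absorbed into the $\omega$-block immediately above it, since $k+\omega\cong\omega$. Consequently $\sum_n(M_n+B_n)$ is an $\omega$-indexed sum of copies of $\omega$, i.e.\ it is isomorphic to $\omega^2$ \emph{no matter what $S$ is}. An ordering with a computable copy cannot code a noncomputable $S$ into any copy's double jump, so there is nothing left to decode. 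You acknowledge the coalescing problem but then defer to Theorem 9.12 of the Ash--Knight book; that theorem does not repair an ordered-sum-with-markers construction --- it concerns a different construction (shuffle sums), which is the one the paper actually uses. Even if you replace the markers by infinite separators (say copies of $\zeta$) so that the finite coding blocks survive condensation, you face a second problem you did not address: identifying ``the $n$-th'' finite condensation class requires counting the classes below it, and that positional information is not obviously $\Delta^c_3$; a careless version lands at level four.

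The paper's proof sidesteps both issues by abandoning positional coding entirely. It takes $A=S\oplus S^c$ and lets $L=\sigma(A\cup\{\omega\})$, the shuffle sum containing densely many maximal discrete blocks of size $k$ for each $k\in A$, interleaved with densely many copies of $\omega$. Density makes the coding position-free: one only needs the purely existential statements ``$L$ has a maximal discrete set of size $2n$'' and ``$L$ has a maximal discrete set of size $2n+1$'', each finitary $\Sigma_3$, and exactly one of the pair holds for each $n$ because both $S$ and $S^c$ are coded. That matched pair is precisely the $\Delta^c_3$ definability you were aiming for, obtained without markers, without condensation bookkeeping, and without counting. If you want to keep an ordered-sum flavor, you must both protect the coding blocks from absorption and show that locating the $n$-th block stays within $\Delta^c_3$; as written, your proof has a genuine gap on both counts.
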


\begin{proof} [Proof sketch]

For a set $A$, the ordering $\sigma(A\cup\{\omega\})$ (the ``shuffle sum'' of orderings of type $n$ for $n\in A$ and of type $\omega$) consists of densely many copies of each of these orderings.  The degrees of copies of $\sigma(A\cup\{\omega\})$ are the degrees of sets $X$ such that $A$ is c.e.\ relative to $X''$.  Let $A = S\oplus S^c$, where $S^c$ is the complement of $S$.  Consider the linear ordering $L = \sigma(A\cup\{\omega\})$. Then we have a pair of finitary $\Sigma_3$ formulas saying that $n \in S$ iff $L$ has a maximal discrete set of size $2n$ and $n \not \in S$ iff $L$ has a maximal discrete set of size $2n+1$.  It follows that any copy of $L''$ uniformly computes the set $S$.
\end{proof}

Using Proposition \ref{two.jumps}, we get the following.    

\begin{prop}

For any graph $G$, there is a linear ordering $L$ such that $G\leq_s L''$, 

\end{prop}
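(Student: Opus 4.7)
The plan is to derive the statement as an immediate consequence of Proposition \ref{two.jumps}, using the graph $G$ itself (via its atomic diagram) as the set $S$ that we ask the linear ordering to code.

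First, fix the graph $G$ and let $S = D(G) \subseteq \omega$ be its atomic diagram, viewed as a subset of $\omega$ under a standard G\"odel numbering. Apply Proposition \ref{two.jumps} to this $S$ to obtain a linear ordering $L$ (for concreteness, $L = \sigma(A \cup \{\omega\})$ with $A = S \oplus S^c$) such that, uniformly in any presentation $\mathcal{L}$ of $L$, the second jump $D(\mathcal{L})''$ computes $S$. The proof sketch given for Proposition \ref{two.jumps} confirms that this computation is uniform in $\mathcal{L}$: there is a fixed Turing functional witnessing it, because the coding of $S$ into $L$ uses a fixed pair of finitary $\Sigma_3$ formulas.

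Next, I translate this from ``jumps of copies of $L$'' to ``copies of the jump structure $L''$,'' which is what the Medvedev reduction requires. By fact (2) about jumps recalled earlier in the excerpt, from any copy $\mathcal{L}$ of $L$ one uniformly computes a copy of $L''$ as $D(\mathcal{L})''$; conversely, every copy of the structure $L''$ has the same Turing degree as $D(\mathcal{L})''$ for the underlying copy $\mathcal{L}$ of $L$ (this is the standard correspondence between presentations of a structure and presentations of its jump). Consequently, from any copy $\mathcal{C}$ of $L''$ we may uniformly compute $S = D(G)$.

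Finally, $D(G)$ is (the atomic diagram of) a copy of $G$, namely $G$ itself, so the identity operator transforms $S$ into a copy of $G$. Composing these two uniform operators yields a single Turing operator $\Phi$ that takes any copy of $L''$ to a copy of $G$, which is precisely a Medvedev reduction $G \leq_s L''$. There is essentially no obstacle here beyond bookkeeping; the only point that requires any care is the passage between jumps of presentations of $L$ and presentations of the jump structure $L''$, and this is covered by the facts about jump structures already collected in the excerpt.
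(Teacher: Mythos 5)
Your proof is correct and is essentially the paper's own argument: take $S$ to be the atomic diagram of a copy of $G$, apply Proposition \ref{two.jumps} to get $L$, and observe that any copy of $L''$ computes $S$ and hence a copy of $G$. The paper states this in one line; your additional care about passing between double jumps of copies of $L$ and copies of the jump structure $L''$ is just the bookkeeping the paper leaves implicit (and for the reduction one really only needs that a copy of $L''$ computes the relations defined by the finitary $\Sigma_3$ coding formulas, which it does by construction).
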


\begin{proof}

Let $S$ be the diagram of a specific copy of $G$ and let $L$ be as in Proposition \ref{two.jumps}.  Then $G\leq_s L''$.     
\end{proof}

\section{Turing computable embedding of graphs in linear orderings}

The class of linear orderings, like the class of graphs, lies on top under Turing computable embeddings.  We describe the Turing computable embedding $L$, given in \cite{FS}, of directed graphs in linear orderings.  

\bigskip
\noindent
\textbf{Friedman-Stanley embedding}.  First, let $(A_n)_{n\in\omega}$ be an effective partition of $\mathbb{Q}$ into disjoint dense sets.  Let $(t_n)_{1\leq n<\omega}$ be a list of the atomic types in the language of directed graphs.  We let $t_1$ be the type of $\emptyset$, we put the types for single elements next, then the types for distinct pairs, then the types for distinct triples, etc.  For a graph $G$, the ordering $L(G)$ is a sub-ordering of $\mathbb{Q}^{<\omega}$, with the lexicographic ordering.  The elements of $L(G)$ are the finite sequences 
\[\sigma = r_0q_1r_1\ldots r_{n-1}q_nr_nk\in\mathbb{Q}^{<\omega}\] 
satisfying the following conditions:  

\begin{enumerate}

\item  for each $i < n$, $r_i\in A_0$, and $r_n\in A_1$, 

\item  there is a special tuple in $G$ associated with $\sigma$, of length $n$, satisfying the atomic type $t_m$, 

\item  if $n = 0$, then the special tuple is $\emptyset$, while if $n\geq 1$, then the special tuple has form $a_1,\ldots,a_n$, where for all $i$ with $1\leq i\leq n$, $q_i\in A_{a_i}$---we can read off the special tuple from the terms in $\sigma$,  

\item  $k$ is a natural number less than $m$.

\end{enumerate}

In talks, the first author has claimed, without any proof, that this embedding does not represent an interpretation.  Our goal in the rest of the paper is to prove the following theorem.    

\begin{thm} [Main Theorem]
\label{main}

There do not exist $L_{\omega_1\omega}$-formulas that, for all graphs $G$, interpret $G$ in $L(G)$.

\end{thm}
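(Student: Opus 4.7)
My plan is to argue by contradiction using the Karp back-and-forth hierarchy. Assume that a fixed tuple $\Theta = (\varphi_D, \varphi_\sim, \varphi_{\neg\sim}, \varphi_E, \varphi_{\neg E})$ of $L_{\omega_1\omega}$-formulas uniformly interprets every graph $G$ in its Friedman-Stanley ordering $L(G)$. Because $\Theta$ is fixed, there is a countable ordinal $\alpha$ such that every formula of $\Theta$ lies in $\Sigma^c_\alpha \cup \Pi^c_\alpha$. Combining this with the standard fact that formulas of complexity $\alpha$ are preserved by the Karp equivalence $\equiv_\alpha$, one gets the semantic consequence that $L(G_1) \equiv_\alpha L(G_2)$ implies $\Theta(L(G_1)) \equiv_\alpha \Theta(L(G_2))$; under the interpretation hypothesis this is just $G_1 \equiv_\alpha G_2$.

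The theorem therefore reduces to the following Main Lemma: for every countable ordinal $\alpha$ there exist graphs $G_1, G_2$ with $G_1 \not\equiv_\alpha G_2$ but $L(G_1) \equiv_\alpha L(G_2)$. The intuition is that $L$ introduces substantial structural redundancy. Each special tuple of $G$ appears in $L(G)$ as infinitely many lexicographic sequences differing only in their $r_i$-fillers drawn from the dense sets $A_0, A_1$ and in the tag $k$. The density of each $A_n$ in $\mathbb{Q}$, combined with this unbounded multiplicity of copies, makes the back-and-forth game on $L(G_1)$ versus $L(G_2)$ strictly easier for Player II than on $G_1$ versus $G_2$ themselves. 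I would construct $G_1, G_2$ as modifications of a common template graph, placing the distinguishing feature at Karp rank exactly $\alpha$ in $G$ and chosen so that the flexibility of the $L$-construction absorbs the distinction below level $\alpha$ in $L(G)$.

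The main obstacle is the explicit back-and-forth argument for the Main Lemma. Given a partial $\beta$-equivalent pairing of tuples of sequences from $L(G_1)$ and $L(G_2)$, Player II must respond to a challenge $\rho = r_0 q_1 r_1 \cdots q_n r_n k$ by producing a matching sequence in the other ordering whose tree-of-prefixes position, block length $n$, atomic-type index $m$, tag $k$, and interleaving dense fillers all agree with $\rho$ up to the remaining budget $\beta$. One proceeds by transfinite induction on the rank, using the density of each $A_n$ and the unbounded multiplicity of blocks to absorb any finitely many back-and-forth steps. The delicate point is to quantify precisely the ``level shift'' introduced by $L$: a Karp-rank-$\alpha$ difference in the graphs should lift to a strictly higher-rank difference in their Friedman-Stanley orderings, giving the required gap. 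Once the gap is established and templates realizing it are exhibited, the contradiction with the hypothesized fixed $\Theta$ is immediate.
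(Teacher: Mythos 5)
There is a genuine gap, and it sits exactly where you park it: the ``Main Lemma'' is the entire theorem, and both the reduction to it and the lemma itself are problematic. First, the reduction. If the interpreting formulas have complexity $\alpha_0$, then one round of the back-and-forth game on the interpreted structures costs roughly $\alpha_0$ rounds of the game on the orderings, because the elements of the interpreted structure are tuples and even the atomic facts ($E$ and $\sim$) about them are $\Sigma_{\alpha_0}\wedge\Pi_{\alpha_0}$ facts about $L(G)$. So the correct implication is of the form $L(G_1)\equiv_{\beta+\alpha_0}L(G_2)\Rightarrow G_1\equiv_\beta G_2$, not ``$\equiv_\alpha$ implies $\equiv_\alpha$.'' Consequently, to refute a $\Theta$ of complexity $\alpha_0$ you need graphs with $G_1\not\equiv_\beta G_2$ but $L(G_1)\equiv_{\beta+\alpha_0}L(G_2)$: a rank \emph{gap of size at least $\alpha_0$}, for every countable $\alpha_0$. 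Your stated lemma ($G_1\not\equiv_\alpha G_2$ but $L(G_1)\equiv_\alpha L(G_2)$, a gap of size essentially $1$) does not yield the theorem even if true. Second, the simplest instances of the required gap provably fail, and in the direction opposite to your intuition: if $G_1\not\equiv_1 G_2$, then some atomic type $t_m$ is realized in one graph and not the other, so one of $L(G_1),L(G_2)$ has a maximal discrete interval of size $m$ and the other does not --- a $\Sigma_2$ difference, whence $L(G_1)\not\equiv_2 L(G_2)$. The density of the $A_n$'s and the multiplicity of representatives do not ``absorb'' this, because the atomic-type data of $G$ is written into $L(G)$ as local finite combinatorics readable at a fixed finite level. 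Whether a gap of unbounded size can be manufactured at higher ranks is precisely the delicate question, you give no construction or argument for it, and the paper's own Lemma 3.19 (same-shape tuples mentioning $\sim^\gamma$-equivalent graph tuples are $\sim^\gamma$-equivalent in $L(G)$) is evidence that ranks transfer with only bounded distortion. So the plan rests on an unproved and quite possibly false combinatorial claim.

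For contrast, the paper's proof does not look for pairs of graphs with collapsed back-and-forth rank at all. It fixes a \emph{single} graph, $G=\omega_1^X$ where the hypothetical formulas are $X$-computable $\Sigma_\gamma$, and shows those formulas cannot interpret this one $G$ in $L(G)$. The mechanism is: (i) a Barwise--Kreisel compactness/overspill argument (via the Harrison ordering and a two-sorted theory) producing tuples $\bar d_\alpha$ in the defined domain $D\subseteq L(G)^{<\omega}$ that represent arbitrarily large ordinals yet all have the same shape and are pairwise $\sim^\gamma$-equivalent; and (ii) the automorphisms of $L(G)$ that rescale the first rational coordinate, which slide one such tuple $\bar b$ to the other side of another, $\bar c$, while preserving $\sim^\gamma$ of the pair, forcing the defined order relation to hold in both directions between two representatives --- a contradiction. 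The information loss exploited is thus not a rank gap between two codings but the impossibility of \emph{definably pinning down which tuple of $L(G)$ names which element of $G$} in the face of $L(G)$'s automorphisms. If you want to salvage your approach, you would have to prove the strong gap statement above; short of that, the argument as proposed does not go through.
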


We begin with some definitions and simple lemmas about $L(G)$.  

\begin{defn}

Let $b = r_0q_1r_1\ldots r_{n-1}q_nr_nk\in L(G)$.  We say that $b$ \emph{mentions} $\bar{a}$ if $\bar{a}$ is the special tuple in $G$ of length $n$, such that for $1\leq i\leq n$, $q_i\in A_{a_i}$.   

\end{defn}

\begin{lem}

Suppose $b\in L(G)$ mentions $\bar{a}$.  Then $b$ lies in a maximal discrete interval of some finite size $m\geq 1$.  The number $m$ tells us the atomic type of $\bar{a}$; in particular, it tells us the length of $\bar{a}$.  

\end{lem}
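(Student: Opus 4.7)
The plan is to let $\sigma = r_0 q_1 r_1 \ldots q_n r_n$ be $b$ with its last coordinate removed, so $b = \sigma k$, and first to exhibit the candidate maximal discrete interval as $I = \{\sigma 0, \sigma 1, \ldots, \sigma(m-1)\}$. By condition (4) in the definition of $L(G)$, every $\sigma j$ with $0 \leq j < m$ is an element of $L(G)$ (the associated special tuple is the same $\bar{a}$, of length $n$ and type $t_m$), so $|I| = m$ and $I \subseteq L(G)$.

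Next I would show that $\sigma k$ and $\sigma(k+1)$ are consecutive in $L(G)$ for $0 \leq k < m-1$. Suppose $\tau \in L(G)$ satisfies $\sigma k < \tau < \sigma(k+1)$. Let $j$ be the first position at which $\tau$ differs from $\sigma k$. Since $\sigma k$ and $\sigma(k+1)$ agree at every position $<2n+1$, if $j<2n+1$ the lex comparison forces $\tau$ to lie strictly on the same side of both, a contradiction. So $j = 2n+1$, meaning $\tau$ starts with $\sigma$ and has some value $\alpha \neq k$ at position $2n+1$. If $\tau$ has length $2n+2$, then $\tau = \sigma\alpha$ with $\alpha \in \mathbb{N}$ and $k < \alpha < k+1$, which is impossible. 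If $\tau$ has length $>2n+2$, then parsing it as $r_0' q_1' r_1' \ldots r_{n'}' k'$ with $n' > n$ forces $r_n' \in A_0$ by condition (1), but $r_n' = r_n \in A_1$; so $r_n \in A_0 \cap A_1 = \emptyset$, contradiction. Hence $I$ is a discrete interval of size exactly $m$.

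To see the interval is maximal, I would argue that neither $\sigma 0$ has an immediate predecessor nor $\sigma(m-1)$ has an immediate successor. Any $\tau$ immediately above $\sigma(m-1)$ cannot extend $\sigma(m-1)$ (again by the $A_0$ vs $A_1$ conflict on $r_n$) and cannot agree with $\sigma$ on the first $2n+1$ positions with a larger $\alpha < m$ at position $2n+1$. So $\tau$ must differ from $\sigma$ at some position $j \leq 2n$, which is either an $r$-slot (values in $A_0$ or $A_1$) or a $q$-slot (values in some $A_{a_i}$); in every case the allowed set of values is dense in $\mathbb{Q}$, so between any putative $\tau$ and $\sigma(m-1)$ one can interpolate another valid element of $L(G)$ by slightly decreasing $\tau_j$. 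Thus $\sigma(m-1)$ has no immediate successor. The symmetric argument shows $\sigma 0$ has no immediate predecessor, so $I$ is a maximal discrete interval.

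Finally, recovering the atomic type of $\bar{a}$ from $m$ is immediate once the size of the maximal discrete interval is known: by the enumeration convention for $(t_n)_{1\leq n<\omega}$ chosen in the definition of the Friedman-Stanley embedding (empty type first, then length-one types, then length-two types, and so on), the index $m$ uniquely determines $t_m$, and in particular determines the arity $n$, which is the length of $\bar{a}$. The main obstacle, really the only delicate point, is the careful case analysis in the consecutiveness step, where one must rule out both same-length and longer candidates $\tau$; the $r_n \in A_1$ vs.\ $r_n \in A_0$ conflict is what makes the argument work and is worth highlighting.
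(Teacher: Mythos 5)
Your proof is correct and follows the only natural route: a direct verification from the definition of $L(G)$ that the block $\{\sigma 0,\ldots,\sigma(m-1)\}$ is discrete (via the $A_0$ versus $A_1$ conflict on $r_n$ for longer candidates) and maximal (via density of the $A_i$), then reading off $t_m$ from $m$. The paper simply asserts all of this as ``clear from the definition,'' so you have supplied the details of the same argument rather than a different one.
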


\begin{proof}

It is clear from the definition of $L(G)$ that if $b$ mentions $\bar{a}$, where $\bar{a}$ satisfies the atomic type $t_m$ on our list, then $b$ lies in a maximal discrete set of size $m$.  Knowing just that $b$ lies in a maximal discrete set of size $m$, we know the atomic type, and this tells us the length of~$\bar{a}$.   
\end{proof}

The structure of the linear ordering $L(G)$ does not directly tell us the lengths of the elements $b$ (as elements of $\mathbb{Q}^{<\omega}$).  However, if $b$ mentions $\bar{a}$ of length $n$, then $b$ has length $2n+2$.  

\begin{lem}

If $b\in L(G)$ has length $2n+2$, then there is an infinite interval around $b$ that consists entirely of elements of length at least $2n+2$.  

\end{lem}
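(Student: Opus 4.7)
My plan is to exhibit an explicit open interval $(b^-,b^+)$ around $b$ in the lexicographic order on $L(G)$, with $b^-,b^+\in L(G)$, such that no element of length less than $2n+2$ lies in $(b^-,b^+)$, and then to check this interval contains infinitely many elements of $L(G)$. Write $b=r_0q_1r_1\cdots q_nr_nk$. The guiding observation is that in any $L(G)$-element of length $2n+4$, position $2n$ holds an \emph{interior} $r$-coordinate and therefore lies in $A_0$, whereas $b(2n)=r_n\in A_1$. Using density of $A_0$ in $\mathbb{Q}$ I pick $s^-,s^+\in A_0$ with $s^-<r_n<s^+$, and complete each to a bona fide length-$(2n+4)$ element of $L(G)$:
$$b^\pm \;=\; r_0\,q_1\cdots q_n\,s^\pm\,q^\pm\,r^\pm\,k^\pm,$$
where $a'$ is any vertex of $G$, $q^\pm\in A_{a'}$, $r^\pm\in A_1$, and $k^\pm$ is any natural number below the index of the atomic type of $(a_1,\dots,a_n,a')$. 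Since $b,b^-,b^+$ agree on positions $0,\dots,2n-1$ and first differ at position $2n$, with $s^-<r_n<s^+$, we obtain $b^-<b<b^+$.

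The main step is then to show $(b^-,b^+)$ excludes every $c\in L(G)$ of length $2\ell+2<2n+2$. Such a $c$ has its last $r$-coordinate $c(2\ell)$ in $A_1$, whereas $b(2\ell)=r_\ell$ lies in $A_0$ (since $\ell<n$). So the first position $i$ at which $c$ disagrees with $b$ satisfies $i\le 2\ell<2n$. Because $b,b^-,b^+$ coincide on all positions below $2n$, this same $i$ is the first disagreement of $c$ with each of $b^-$ and $b^+$, and the direction of disagreement (sign of $c(i)-b(i)$) is identical in all three comparisons. Consequently $c$ lies on one and the same side of $b^-,b,b^+$, precluding $c\in(b^-,b^+)$.

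The one technical nuisance I anticipate is the lexicographic comparison of sequences of different lengths. This is resolved by the preliminary observation that no two distinct elements of $L(G)$ are prefixes of one another: if $c$ were a proper prefix of $c'$ in $L(G)$, then $c$'s last $r$-coordinate would have to lie simultaneously in $A_1$ (as the last $r$-coordinate of $c$) and in $A_0$ (as an interior $r$-coordinate of $c'$), which is impossible. So every lex comparison between two elements of $L(G)$ is genuinely decided by a first-disagreement position within the shorter of the two lengths, and the argument above goes through cleanly. Finally, $(b^-,b^+)$ is infinite because, by density of $A_1$, there are infinitely many $r'\in A_1\cap(s^-,s^+)$, each giving rise to a length-$(2n+2)$ element $r_0q_1\cdots q_nr'k'$ of $L(G)$ inside the interval.
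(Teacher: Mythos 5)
Your proof is correct and follows essentially the same route as the paper, which takes the interval to be the set of all elements of $L(G)$ extending the length-$2n$ prefix $r_0q_1\cdots q_n$ of $b$; your $(b^-,b^+)$ is just an explicit sub-interval of that set, and your $A_0$-versus-$A_1$ first-disagreement analysis (including the observation that no element of $L(G)$ is a proper prefix of another) spells out the details the paper leaves implicit. The only nitpick is that your auxiliary vertex $a'$ should be chosen distinct from $a_1,\dots,a_n$ so that $(a_1,\dots,a_n,a')$ realizes one of the listed atomic types of distinct tuples.
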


\begin{proof}    

Suppose that $b = r_0q_1r_1\ldots r_{n-1}q_n r_n k$.  The elements $d$ that extend the initial segment
$r_0q_1r_1\ldots r_{n-1}q_n$, of length $2n$, are closer to $b$ than those that differ on one of the first $2n$ terms.  These $d$ all have length at least $2n+2$, and they form the interval we want.
\end{proof}

\begin{lem}

Let $b,b'\in L(G)$, where $b < b'$, and let $d$ be an element of $[b,b']$ of minimum length.  If $d$ mentions $\bar{c}$, then all elements of $[b,b']$ mention extensions of $\bar{c}$.  

\end{lem}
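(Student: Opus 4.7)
The plan is to fix an arbitrary $e \in [b, b']$ distinct from $d$, compare $e$ with $d$ in the lexicographic order, and argue by case analysis on $i$, the first position where they differ. A preliminary observation I would record first is that no two distinct elements of $L(G)$ are prefixes of one another: if $\sigma, \tau \in L(G)$ with $|\sigma| < |\tau|$ and $\sigma$ were a prefix of $\tau$, then at position $|\sigma| - 1$ the element $\sigma$ would have its final $r$-entry (in $A_1$) while $\tau$ would have an intermediate $r$-entry (in $A_0$), contradicting $A_0 \cap A_1 = \emptyset$. So any two distinct elements of $L(G)$ have a well-defined first position of disagreement.

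Without loss of generality take $e < d$, so $e(i) < d(i)$. The easy case is $i \geq 2n + 1$: then $e$ and $d$ agree on positions $1, \ldots, 2n$, so in particular $e(2k) = d(2k) \in A_{c_k}$ for $1 \leq k \leq n$, which forces the special tuple mentioned by $e$ to begin with $\bar{c}$, yielding the conclusion. The bulk of the proof is the range $i \leq 2n$, where the strategy is to exhibit $g \in L(G)$ with $e < g < d$ and $|g| < 2n + 2$, so that $g \in [b, b']$ contradicts the minimality of $d$. I would construct $g$ to agree with $d$ on positions $1, \ldots, i - 1$ and to place $g(i)$ in the open interval $(e(i), d(i))$ in an appropriate $A_k$, using density of each $A_k$. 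For odd $i = 2l + 1$ with $l < n$, take $|g| = 2l + 2$ with mentioned tuple $(c_1, \ldots, c_l)$, so that $g(i)$ plays the role of the final $r$-entry of $g$ and must lie in $A_1$. For even $i = 2l$ with $l < n$, pick some $c'_l \in G \setminus \{c_1, \ldots, c_{l - 1}\}$ (possible since $G$ is infinite), take $|g| = 2l + 2$ with mentioned tuple $(c_1, \ldots, c_{l - 1}, c'_l)$, and put $g(i) \in A_{c'_l}$. In both subcases $|g| = 2l + 2 \leq 2n < 2n + 2$, yielding the required contradiction; the remaining entries of $g$ (another $A_1$ rational and a small natural number) are filled in from density and the enumeration of types.

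The main obstacle is the boundary case $i = 2n$: the recipe above would produce $g$ of length exactly $2n + 2$ and so fails to contradict minimality. My plan is to handle this case by producing a length-$2n$ witness instead, namely $g$ agreeing with $d$ on positions $1, \ldots, 2n - 1$ and with $g(2n) \in \mathbb{N}$ strictly between $e(2n)$ and $d(2n)$ and less than the type index of $(c_1, \ldots, c_{n - 1})$. Success of this step forces $e(2n) \in A_{c_n}$, so $c^e_n = c_n$, and combined with the earlier-positions agreement this gives that $e$ mentions an extension of $\bar{c}$. This boundary step, requiring a delicate use of the density of $\mathbb{Q}$ together with minimality to rule out the case $c^e_n \neq c_n$, is where I expect the most care will be needed.
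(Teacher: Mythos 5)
Your overall strategy --- reduce to the first position of disagreement between an arbitrary $e\in[b,b']$ and $d$, and for early disagreements manufacture a strictly shorter element of $L(G)$ lying between them to contradict minimality --- is sound; your no-prefix observation is correct, and the cases $i\leq 2n-1$ go through essentially as you describe. The gap is exactly where you feared: the boundary case $i=2n$, the position carrying $q_n$. The witness you propose there cannot exist. A length-$2n$ element of $L(G)$ has its final $r$-entry at position $2n-1$, and that entry must lie in $A_1$; but your $g$ is required to agree with $d$ at position $2n-1$, where $d$ carries the non-final entry $r_{n-1}\in A_0$. Since $A_0\cap A_1=\emptyset$, no such $g$ is in $L(G)$. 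Worse, the case cannot be repaired by a cleverer witness: by your own convexity/no-prefix argument, every $f\in[e,d]$ must extend the common prefix $r_0q_1\ldots q_{n-1}r_{n-1}$ of length $2n-1$; any such $f$ has $r_{n-1}\in A_0$ at position $2n-1$, so it cannot have length $2n$ (and even lengths below $2n$ are impossible for an extension of a length-$(2n-1)$ string), whence $|f|\geq 2n+2$ and minimality of $d$ yields nothing. Concretely, with $n=1$ take $d=r_0\,q_1\,r_1\,k$ and $e=r_0\,q_1'\,r_1'\,k'$ with $q_1'<q_1$ and $q_1'$ in a different $A_j$ than $q_1$: both have length $4$, both realize the minimum length in $[e,d]$, and they mention distinct singletons, neither extending the other. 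So the last coordinate of the mentioned tuple genuinely need not be preserved across the interval, and the step you flagged as delicate cannot be carried out.

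What your argument does establish is that every element of $[b,b']$ agrees with $d$ on positions $1,\ldots,2n-1$ and hence mentions an extension of $(c_1,\ldots,c_{n-1})$ --- one coordinate short of the stated conclusion. For comparison, the paper's own one-line proof asserts that all elements of $[b,b']$ extend the initial segment of $d$ of length $2n$ (i.e., through $q_n$), justified only by the preceding lemma about the interval of elements of length at least $2n+2$ around $d$; that lemma does not exclude the configuration above, so the paper does not address this case either. You have correctly isolated the one genuinely problematic point; the fix is not a more careful density argument but either weakening the conclusion to extensions of the length-$(n-1)$ tuple or adding a hypothesis that forces agreement at the $q_n$ position.
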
     

\begin{proof}

Say that $d$ has length $2k+2$.  Then $b$ and $b'$ are both in an interval around $d$ consisting of elements of length at least $2k+2$.  Let $\sigma$ be the initial segment of $d$ of length $2k$.  Then all elements of $[b,b']$ must extend $\sigma$.  Thus, all of these mention extensions of $\bar{c}$. 
\end{proof}

Let $\bar{b}$ be a tuple in $L(G)$.  For each $b_i$ in $\bar{b}$, let $\bar{a}_i$ be the tuple in $G$ mentioned by $b_i$.  The formulas true of $\bar{b}$ in $L(G)$ are determined by the formulas true in $G$ of the various $\bar{a}_i$, together with the ``shape'' of~$\bar{b}$.     

\begin{defn}

For a tuple $\bar{b} = (b_1,\ldots,b_n)$ in $L(G)$, the shape encodes the following information:

\begin{enumerate}

\item  the order type of $\bar{b}$---for simplicity, we suppose that \\
$b_1 < b_2 < \ldots < b_n$,

\item  the size of each interval $(b_i,b_{i+1})$---we note that the interval is infinite unless $b_i,b_{i+1}$ belong to the same finite discrete set in $L(G)$, which means that they agree on all but the last term,

\item the location of each $b_i$ in the finite discrete interval to which it belongs,

\item  the length of each $b_i$,

\item  for $i < n$, the number $k_i$ such that $2k_i+2$ is the length of a shortest element $d$ in the interval $[b_i,b_{i+1}]$---$d$ mentions a tuple $\bar{c}$ of length $k_i$, and all elements of $[b_i,b_{i+1}]$ mention tuples that extend~$\bar{c}$.   

\end{enumerate}          

\end{defn}

\begin{prop}

For each $n$-tuple $\bar{b}$, there exist $\Pi^c_4$, and $\Sigma^c_4$, formulas in the language of linear orderings, that, for all $G$, say in $L(G)$ that the $n$-tuple $\bar{x}$ has the same shape as $\bar{b}$.
\end{prop}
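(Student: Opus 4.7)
The strategy is to translate each of the five items in the shape of $\bar{b}$ into a computable infinitary formula in the language $<$, and combine them by finite conjunction. Item (1) is simply the quantifier-free description of the strict order type of $\bar{x}$. Item (2), asserting that the interval $(x_i,x_{i+1})$ has a given size, is $\Sigma^c_2$ when that size is a specific finite number $s$ (existentially witness the $s$ intermediate elements, together with $\Pi^c_1$ clauses saying that consecutive witnesses have nothing between them) and $\Pi^c_2$ when it is infinite (``for every $k$ there are $k$ many elements in between'').

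Items (3) and (4) reduce to ``$x_i$ sits in a maximal discrete set of size $m$ at position $j$'', which I claim is $\Sigma^c_3$. To write it out: existentially witness the $m-1$ other elements $z_1<\dots<z_j<x_i<z_{j+2}<\dots<z_m$, conjoin the $\Pi^c_1$ adjacency clauses $\neg\exists w\,(z_k<w<z_{k+1})$, and impose the $\Pi^c_2$ conditions that $z_1$ has no immediate predecessor and $z_m$ no immediate successor (e.g.\ $\forall w<z_1\,\exists v\,(w<v<z_1)$). The overall formula has the form $\exists\bar z(\Pi^c_2)$, which is $\Sigma^c_3$. The size $m$ determines the atomic type $t_m$, hence its arity, hence the length $2\,\mathrm{arity}(t_m)+2$ of $x_i$ in $\mathbb{Q}^{<\omega}$, so item (4) is automatic. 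As a by-product, ``$d$ has length exactly $2k+2$'' is the computable disjunction $\bigvee_{m\in M_k}$ of these $\Sigma^c_3$ formulas, where $M_k=\{m:\mathrm{arity}(t_m)=k\}$ is computable; this is again $\Sigma^c_3$, and its negation $\Pi^c_3$.

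The complexity is forced up by item (5), the minimum length in $[x_i,x_{i+1}]$, which I regard as the main obstacle. I split ``the shortest element of $[x_i,x_{i+1}]$ has length $2k_i+2$'' into (a) ``$\exists d\in[x_i,x_{i+1}]$ of length exactly $2k_i+2$'', a $\Sigma^c_3$ formula by the observation above, and (b) ``$\forall d\in[x_i,x_{i+1}]$, $d$ has length at least $2k_i+2$''. For (b), ``length at least $2k_i+2$'' is the negation of the $\Sigma^c_3$ condition ``length at most $2k_i$'' and so is $\Pi^c_3$; bounding $d$ by $[x_i,x_{i+1}]$ and applying $\forall d$ keeps the formula in $\Pi^c_3$. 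The conjunction of the $\Sigma^c_3$ clause (a) with the $\Pi^c_3$ clause (b) lies in $\Sigma^c_4\cap\Pi^c_4$. Since both $\Sigma^c_4$ and $\Pi^c_4$ are closed under finite conjunction, conjoining all five pieces gives a single formula in $\Sigma^c_4\cap\Pi^c_4$ expressing ``$\bar{x}$ has the same shape as $\bar{b}$'' in every $L(G)$, producing both the $\Sigma^c_4$ and $\Pi^c_4$ formulas required. The jump to level $4$ in (5) appears unavoidable because one must simultaneously assert the existence of an element of small length and a universal lower bound on the lengths of all elements in the interval.
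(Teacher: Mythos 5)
Your proof is correct and follows essentially the same route as the paper: item-by-item complexity bounds (quantifier-free for the order type, $\Sigma^c_2\cap\Pi^c_2$ for interval sizes, $\Sigma^c_3$ for position in a maximal discrete set and hence for length, $\Pi^c_3$ for the lower bound on lengths in $[x_i,x_{i+1}]$), combined by finite conjunction into a formula that is both $\Sigma^c_4$ and $\Pi^c_4$. Your explicit treatment of the existential half of item (5) is a detail the paper leaves implicit, but it is not a different argument.
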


\begin{proof}

We note the following:   
  
\begin{enumerate}

\item  For any finite $n$, we have a finitary $\Sigma_2$ formula saying of an interval that it has at least $n$ elements and it does not have at least $n+1$ elements.  Thus, there are finitary $\Sigma_2$ and $\Pi_2$ formulas  saying that an interval $(b_i,b_{i+1})$ has size $n$.   

\item  We have a finitary $\Sigma_3$ formula saying that $b_i$ sits in a specific position in a maximal discrete set of size $n$.  

\item  Assuming that our list of the atomic types $(t_n)_{1\leq n < \omega}$ is as described above, we have finitary $\Sigma_3$ formulas saying that $b_i$ has length $2n+2$---we take a finite disjunction of formulas saying that $b_i$ lies in a maximal discrete interval of size $r$, where $t_r$ is the atomic type of a tuple of length $n$. 

\item  For each $k$, we have a finitary $\Pi_3$ formula saying that all $z$ in $[b_i,b_{i+1}]$ have length at least $2k+2$.

\end{enumerate}

Taking an appropriate finite conjunction of the formulas described above, we obtain a $\Sigma^c_4$ definition of the set of tuples of a specific shape, and also a $\Pi^c_4$ definition.  
\end{proof}

\noindent
\textbf{Remarks on elements of length $2$}:  Suppose $d$ has length $2$.  Then $\emptyset$ is the tuple mentioned by $d$ and the atomic type of $\emptyset$ is $t_1$, so $d$ has the form $r_00$, where $r_0\in A_1$.  Note that $d$ is the only element of $L(G)$ that starts with $r_0$.  If $b < d < b'$, then $b$ has first term $r$ and $b'$ has first term $r'$, where $r < r_0 < r'$.  Since all $A_i$ are dense in $\mathbb{Q}$, essentially everything happens in the intervals $(b,d)$ and $(d,b')$.

\begin{lem}
\label{lem10}

Suppose $c < c^* < c'$ in $L(G)$, where $c^*$ has length $2$.     

\begin{enumerate}

\item  For any $\bar{e}$ in $(c,\infty)$, there is an automorphism of $(c,\infty)$ taking $\bar{e}$ to some $\bar{e}'$ in the interval $(c,c^*)$.  

\item  For any $\bar{e}$ in $(-\infty,c')$, there is an automorphism of $(-\infty,c')$ taking $\bar{e}$ to some $\bar{e}'$ in the interval $(c^*,c')$. 

\end{enumerate}  

\end{lem}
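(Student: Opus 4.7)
Both parts are symmetric, so I describe (1) in detail and sketch (2) at the end. Let $c^* = r^* 0$ with $r^* \in A_1$ (as in the remarks preceding the lemma), and let $t_0$ denote the first rational term of $c$, so that $t_0 < r^*$.

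The key idea is to introduce a second length-$2$ element $c^{**} = r^{**} 0$ chosen to lie strictly to the right of all of $\bar{e}$, so $\bar{e} \subset (c, c^{**})$, and then to produce an order-automorphism $\phi$ of $(c,\infty)$ sending $c^{**}$ to $c^*$; such a $\phi$ carries $(c,c^{**})$ bijectively onto $(c,c^*)$, so $\phi(\bar{e}) \subset (c,c^*)$. By density of $A_1$ in $\mathbb{Q}$, choose $r^{**} \in A_1$ strictly larger than $r^*$ and larger than every first rational term occurring in $\bar{e}$, and decompose
\[
(c,\infty) = X \cup Y^{**} \cup \{c^{**}\} \cup (c^{**},\infty),
\]
where $X$ is the (possibly empty) set of elements of $L(G)$ whose first term equals $t_0$ and which are lex-greater than $c$, and $Y^{**}$ is the set of elements whose first term lies in $(t_0, r^{**})$. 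The analogous decomposition using $r^*$ in place of $r^{**}$ exhibits $(c,\infty) = X \cup Y^* \cup \{c^*\} \cup (c^*,\infty)$. A standard back-and-forth on countable dense linear orderings---using that each $A_i$ is countable and dense in every open $\mathbb{Q}$-subinterval---produces order-isomorphisms of $\mathbb{Q}$-intervals $\alpha\colon (r^{**},\infty) \to (r^*,\infty)$ and $\beta\colon (t_0,r^{**}) \to (t_0,r^*)$ that preserve each $A_i$ setwise; at each stage one needs a matching element lying in a prescribed open subinterval of $\mathbb{Q}$ and in a prescribed $A_i$, and density of each $A_i$ delivers exactly that.

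Replacing the first rational term of any $L(G)$-element by its $\alpha$- or $\beta$-image, leaving all other entries alone, induces bijections $\alpha^*\colon (c^{**},\infty) \to (c^*,\infty)$ and $\beta^*\colon Y^{**} \to Y^*$; these are order-isomorphisms because elements of $L(G)$ with distinct first terms are lex-compared on those terms alone, while elements sharing a first term are compared on the remaining entries, which we do not touch. Patching together the identity on $X$, $\beta^*$ on $Y^{**}$, the assignment $c^{**} \mapsto c^*$, and $\alpha^*$ on $(c^{**},\infty)$ yields the desired order-automorphism $\phi$ of $(c,\infty)$, with $\phi((c,c^{**})) = (c,c^*)$. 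The main technical step is the simultaneous $A_i$-preserving back-and-forth construction of $\alpha$ and $\beta$, which is standard given the partition $\{A_i\}_{i\in\omega}$ of $\mathbb{Q}$ into dense sets; everything else is bookkeeping about the lex order. Part (2) follows by the mirror-image construction: pick $r^{**} \in A_1$ strictly less than $r^*$ and less than every first rational term of $\bar{e}$, so that $\bar{e} \subset (c^{**},c')$, decompose $(-\infty,c')$ symmetrically around $c^*$ and $c^{**}$, and build an order-automorphism of $(-\infty,c')$ sending $c^{**}$ to $c^*$.
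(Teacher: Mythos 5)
Your proof is correct and follows essentially the same route as the paper: choose a second length-$2$ element $c^{**}$ to the right of $\bar{e}$, build an automorphism of $(\mathbb{Q},<,(A_i)_{i\in\omega})$ fixing the first term of $c$ and sending the first term of $c^{**}$ to that of $c^*$, and let it act on $L(G)$ by rewriting first coordinates only. The only cosmetic difference is that you construct that automorphism of $\mathbb{Q}$ explicitly by a back-and-forth on the two relevant subintervals, whereas the paper simply asserts its existence.
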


\begin{proof}

We prove (1).  Note that $c^*$ has form $r0$, where $r\in A_1$.  The first term of $c$ is some $q < r$.  Let $c^{**}$ be an element of length $2$ to the right of all elements of $\bar{e}$, with first term $p$.  There is a permutation of $\mathbb{Q}$, say $f$, such that 
\begin{enumerate}

\item [(a)]  $f$ preserves the ordering and membership in the $A_i$'s (i.e., $f$ is an automorphism of the structure 
$(\mathbb{Q},<,(A_i)_{i\in\omega})$, 

\item [(b)]  $f(q) = q$ and $f(p) = r$.  

\end{enumerate}
We define an automorphism $g$ of $(c,\infty)$, taking each element $x\sigma$ to $f(x)\sigma$---we are changing just the first term.  The fact that $f$ preserves the ordering and membership in $A_i$'s is needed to be sure that $g$ has domain and range $(c,\infty)$.         
\end{proof}

If $a < b$ in the ordering $L(G)$, we may say that $a$ lies \emph{to the left of} $b$, or that $b$ lies \emph{to the right of} $a$.

\begin{lem}
\label{lem11}

Let $\bar{b}$ be a finite tuple in $L(G)$, and let $c$ be an element of $L(G)$.   

\begin{enumerate}

\item  There is an automorphism of $L(G)$ taking $\bar{b}$ to a tuple $\bar{b}'$, with $<$-first element $b'$, such that $c < b'$ and the interval $(c,b')$ contains some element of length $2$.   

\item  There is also an automorphism taking $\bar{b}$ to a tuple $\bar{b}''$, with $<$-greatest element $b''$, such that $b'' < c$ and the interval $(b'',c)$ contains some element of length $2$.    

\end{enumerate}

\end{lem}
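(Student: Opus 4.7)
The plan is to build the required automorphisms of $L(G)$ by lifting automorphisms of the colored rational ordering $(\mathbb{Q}, <, (A_i)_{i\in\omega})$. Since the $A_i$'s partition $\mathbb{Q}$ into disjoint dense sets, this structure is ultrahomogeneous: any finite order- and color-preserving partial map extends to an automorphism. Given such an automorphism $f$, I define $g$ on $L(G)$ by applying $f$ term-by-term to the rational entries of each sequence and leaving the final natural-number tag $k$ fixed. Then $g$ will be an automorphism of $L(G)$, because $f$ respects the $A_i$'s (so all constraints in the definition of $L(G)$ are preserved) and any two distinct elements of $L(G)$ must first disagree at a position that is a rational entry in both (the terminal coordinate $r_n \in A_1$ of a shorter element cannot coincide with the corresponding $A_0$-valued coordinate required by any longer one). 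This is the same device as in the proof of Lemma~\ref{lem10}.

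For part (1), let $s$ be the first term of $c$ and pick $r^* \in A_1$ with $r^* > s$, possible by density of $A_1$. Enumerate the rationals appearing as terms anywhere in $\bar{b}$ as $u_1 < u_2 < \cdots < u_N$, recording the index $i_j$ with $u_j \in A_{i_j}$. By density of each $A_{i_j}$, choose $v_1 < v_2 < \cdots < v_N$ with $v_j \in A_{i_j}$ and $v_1 > r^*$. By ultrahomogeneity, the partial map $u_j \mapsto v_j$ extends to an automorphism $f$ of $(\mathbb{Q}, <, (A_i))$. Let $g$ be the induced automorphism of $L(G)$ and $\bar{b}' = g(\bar{b})$. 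Every first coordinate of an element of $\bar{b}'$ lies in $\{v_1, \ldots, v_N\}$ and so exceeds $r^*$; hence the $<$-first element $b'$ of $\bar{b}'$ satisfies $r^* 0 < b'$. The element $r^* 0 \in L(G)$ has length $2$ and exceeds $c$ because its first coordinate $r^*$ exceeds the first coordinate $s$ of $c$. Therefore $c < r^* 0 < b'$, as required.

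Part (2) is entirely symmetric: choose $r^* \in A_1$ with $r^* < s$, select $v_1 < v_2 < \cdots < v_N < r^*$ with $v_j \in A_{i_j}$, extend $u_j \mapsto v_j$ to an automorphism $f$ by ultrahomogeneity, and set $\bar{b}'' = g(\bar{b})$. The $<$-greatest element $b''$ of $\bar{b}''$ has first coordinate at most $v_N < r^*$, giving $b'' < r^* 0 < c$.

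The step that demands the most care is the verification that the termwise action $g$ is well-defined on $L(G)$ and preserves $<$; this is really the content of the first paragraph above, and the rest of the argument is a short calculation once the lifting is in hand. The existence of $r^*$, the existence of the targets $v_j$, and the extension of the resulting finite partial map to a full automorphism of the colored rationals all rest directly on the density of each $A_i$ and the ultrahomogeneity of $(\mathbb{Q}, <, (A_i)_{i\in\omega})$, so I do not expect any genuine obstacle beyond bookkeeping.
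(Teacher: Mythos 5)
Your proof is correct and follows essentially the same route as the paper: lift an order- and color-preserving automorphism of $(\mathbb{Q},<,(A_i)_{i\in\omega})$ to an automorphism of $L(G)$ that pushes $\bar{b}$ past a designated length-$2$ element $r^*0$ on the far side of $c$. The only (harmless) cosmetic difference is that the paper applies $f$ to the first coordinate only, via $g(x\sigma)=f(x)\sigma$, whereas you apply it to every rational coordinate; both preserve membership in $L(G)$ and the lexicographic order, and your extra care in checking this is fine.
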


\begin{proof}   

We give the proof for (1).  Suppose that $c$ (as a finite sequence) begins with $r$.  Let $b$ be the $<$-first element of $\bar{b}$, and suppose that $b$ (as a finite sequence) begins with $p$.  Let $f$ be a permutation of $\mathbb{Q}$ that preserves the ordering and membership in the $A_i$'s, and such that $f(p) > r$.  We have an automorphism $g$ of $L(G)$ such that 
\[g(x\sigma) = f(x)\sigma.\]  
By our choice of $f$, it follows that $g$ has domain and range all of $L(G)$.  To see that there is an element of length $2$ between $c$ and the $<$-first element of $g(\bar{b})$, we note that there is an element of $A_1$ between $r$ and $f(p)$.            
\end{proof}

\subsection{The relations $\sim^\gamma$}

Below, we recall a family of equivalence relations, defined for pairs of tuples, from the same structure, or from two different structures.    
  
\begin{defn}

Let $\mathcal{A}$ and $\mathcal{B}$ be structures for a fixed finite relational language.  Let $\bar{a}$ and 
$\bar{b}$ be tuples of the same length, where $\bar{a}$ in $\mathcal{A}$ and $\bar{b}$ is in $\mathcal{B}$.  

\begin{enumerate}

\item  $(\mathcal{A},\bar{a})\sim^0(\mathcal{B},\bar{b})$ if the tuples $\bar{a}$ and $\bar{b}$ satisfy the same atomic formulas in their respective structures.

\item  For $\gamma > 0$, $(\mathcal{A},\bar{a})\sim^\gamma(\mathcal{B},\bar{b})$ if for all $\beta < \gamma$, 

\begin{enumerate}

\item  for all $\bar{c}\in \mathcal{A}$, there exists $\bar{d}\in \mathcal{B}$ such that \\
$(\mathcal{A},\bar{a},\bar{c})\sim^\beta(\mathcal{B},\bar{b},\bar{d})$,

\item  for all $\bar{d}\in \mathcal{B}$, there exists $\bar{c}\in \mathcal{A}$ such that \\
$(\mathcal{A},\bar{a},\bar{c})\sim^\beta(\mathcal{B},\bar{b},\bar{d})$.

\end{enumerate}

\end{enumerate}

\end{defn}

\noindent
\textbf{Note}:  We write $\mathcal{A}\sim^\gamma\mathcal{B}$ to indicate that 
$(\mathcal{A},\emptyset)\sim^\gamma(\mathcal{B},\emptyset)$.

\begin{lem}
\label{lem:equivalence:characterization}
  
Let $\mathcal{A}$ be a computable structure for a finite relational language.  For any 
$\gamma < \omega^{CK}_1$ and for any tuple $\bar{a}$ in $\mathcal{A}$, we can effectively find a $\Pi^c_{2\gamma}$-formula $\varphi^\gamma_{\bar{a}}(\bar{x})$ such that 
$\mathcal{A} \models \varphi^\gamma_{\bar{a}}(\bar{b})$ iff $\bar{a} \sim^\gamma \bar{b}$.

\end{lem}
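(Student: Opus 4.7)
The plan is to proceed by effective transfinite induction on $\gamma < \omega^{CK}_1$, carried out along a fixed system of ordinal notations (Kleene's $\mathcal{O}$), producing at each stage an index for $\varphi^\gamma_{\bar{a}}$ uniformly in the notation for $\gamma$ and the tuple $\bar{a}$.

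For the base case $\gamma = 0$, the relation $\bar{a} \sim^0 \bar{b}$ says that $\bar{a}$ and $\bar{b}$ realize the same atomic type. Since the language is finite and relational, there are only finitely many atomic formulas in a fixed tuple of variables, and since $\mathcal{A}$ is computable we can decide which of them hold of $\bar{a}$. I let $\varphi^0_{\bar{a}}(\bar{x})$ be the finite conjunction of the atomic formulas true of $\bar{a}$ together with the negations of those that are false; this is quantifier-free, hence $\Pi^c_0$, and is computed effectively from $\bar{a}$.

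For $\gamma > 0$, assuming by the inductive hypothesis that we have effectively produced $\Pi^c_{2\beta}$ formulas $\varphi^\beta_{\bar{a}\bar{c}}(\bar{x},\bar{y})$ for every $\beta < \gamma$ and every finite tuple $\bar{c}$ from $\mathcal{A}$, I would define
\[
\varphi^\gamma_{\bar{a}}(\bar{x}) \;=\; \bigwedge_{\beta<\gamma}\bigwedge_{k<\omega}\Bigl[\,\bigwedge_{\bar{c}\in\omega^k}\exists\bar{y}\,\varphi^\beta_{\bar{a}\bar{c}}(\bar{x},\bar{y}) \;\wedge\; \forall\bar{y}\,\bigvee_{\bar{c}\in\omega^k}\varphi^\beta_{\bar{a}\bar{c}}(\bar{x},\bar{y})\,\Bigr],
\]
which directly mirrors the two back-and-forth clauses in the definition of $\sim^\gamma$. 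Correctness is then immediate by induction: evaluated in $\mathcal{A}$, the first inner conjunct asserts that every tuple $\bar{c}$ of length $k$ in $\mathcal{A}$ is matched by some $\bar{d}$ with $(\mathcal{A},\bar{a},\bar{c}) \sim^\beta (\mathcal{A},\bar{b},\bar{d})$, and the second conjunct provides the reverse direction.

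For the complexity bookkeeping, $\exists\bar{y}\,\varphi^\beta_{\bar{a}\bar{c}}$ is $\Sigma^c_{2\beta+1}$, and the c.e.\ disjunction $\bigvee_{\bar{c}}\varphi^\beta_{\bar{a}\bar{c}}$ is also $\Sigma^c_{2\beta+1}$, so the bracketed expression is $\Pi^c_{2\beta+2}$; the outer c.e.\ conjunction over $\beta < \gamma$ and $k < \omega$ is then $\Pi^c_{2\gamma}$, using $\sup_{\beta<\gamma}(2\beta+2) \leq 2\gamma$ (with equality at successor $\gamma$). The only technical point, which I expect to be routine rather than a genuine obstacle, is arranging that an index for $\varphi^\gamma_{\bar{a}}$ depends computably on a notation for $\gamma$ and a code for $\bar{a}$; this is handled in the standard way, combining the effective enumeration of tuples with the recursion theorem applied to an operator that reads the predecessors of $\gamma$ off of its notation in $\mathcal{O}$.
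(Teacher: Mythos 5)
Your proof is correct and follows essentially the same route as the paper: the same base-case conjunction of the atomic diagram of $\bar{a}$, the same inductive formula mirroring the two back-and-forth clauses, and the same complexity count giving $\Pi^c_{2\gamma}$. The extra remarks about uniformity via ordinal notations and the recursion theorem are fine and only make explicit what the paper leaves implicit in the phrase ``we can effectively find.''
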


\begin{proof}

We proceed by induction on $\gamma$.  Let $\gamma = 0$. Then
\[\varphi^0_{\bar{a}}(\bar{x}) = \bigwedge_{\phi(\bar{x}) \in B} \phi(\bar{x}),\]
where $B$ is the set of atomic formulas and negations of atomic formulas true of $\bar{a}$ in $\mathcal{A}$.  This formula is finitary quantifier-free.
Suppose $\gamma > 0$, where we have the formulas $\varphi^\beta_{\bar{a}}$ for all $\beta < \gamma$ and all $\bar{a}$.  Then
\[\varphi^\gamma_{\bar{a}}(\bar{x}) = \bigwedge_{\beta < \gamma} 
[\bigwedge_{\bar{c}}(\exists \bar{y})\varphi^\beta_{\bar{a},\bar{c}}(\bar{x},\bar{y})\ \&\ \bigwedge_{\bar{y}} 
(\forall \bar{y})\bigvee_{\bar{c}} \varphi^\beta_{\bar{a},\bar{c}}(\bar{x},\bar{y})]\]
This formula is $\Pi^c_{2\gamma}$, as required.  
\end{proof}

\begin{lem}
\label{lem:equivalence:language}

Let $L$ be a fixed finite relational language.  For any computable ordinal $\gamma$,
and any tuples of variables $\bar{x}$, $\bar{y}$, of the same length, we can effectively find a computable $\Pi_{2\gamma}$-formula $\varphi^\gamma(\bar{x},\bar{y})$ such that for any $L$-structure $\mathcal{A}$, and any tuples $\bar{a}$ and $\bar{b}$ from $\mathcal{A}$, $\mathcal{A}\models \varphi^\gamma(\bar{a},\bar{b})$ iff $(\mathcal{A},\bar{a}) \sim^\gamma (\mathcal{A},\bar{b})$.

\end{lem}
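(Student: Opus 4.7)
The plan is to mirror the induction in Lemma \ref{lem:equivalence:characterization}, but since we no longer have a fixed computable structure from which to enumerate the tuples $\bar c$, we replace the conjunctions $\bigwedge_{\bar c}$ and disjunctions $\bigvee_{\bar c}$ over elements of $\mathcal{A}$ by honest $\forall$ and $\exists$ quantifiers. Because $L$ is finite relational, the base case is truly finitary: I would set
\[\varphi^0(\bar x,\bar y)\;=\;\bigwedge_{\phi(\bar u)}\bigl(\phi(\bar x)\leftrightarrow\phi(\bar y)\bigr),\]
where $\phi$ ranges over the finitely many atomic $L$-formulas in $|\bar x|$ variables. This is quantifier-free and captures $\sim^0$ directly.

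For the inductive step, assuming computable $\Pi^c_{2\beta}$-formulas $\varphi^\beta(\bar x',\bar y')$ are already produced for every $\beta<\gamma$ and every common length of $\bar x',\bar y'$, I would define
\[\varphi^\gamma(\bar x,\bar y)\;=\;\bigwedge_{\beta<\gamma}\;\bigwedge_{k\in\omega}\Bigl[(\forall\bar u)(\exists\bar v)\,\varphi^\beta(\bar x\bar u,\bar y\bar v)\;\wedge\;(\forall\bar v)(\exists\bar u)\,\varphi^\beta(\bar x\bar u,\bar y\bar v)\Bigr],
\]
where $\bar u,\bar v$ are tuples of length $k$. Unwinding the definition of $\sim^\gamma$ gives correctness: for any $\mathcal{A}$ and any $\bar a,\bar b$ in $\mathcal{A}$, the back-and-forth clauses in the definition of $\sim^\gamma$ are exactly the universal/existential alternations on extensions $\bar c,\bar d$ drawn from $\mathcal{A}$. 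The complexity bookkeeping is clean: $\varphi^\beta$ is $\Pi^c_{2\beta}$, so $(\exists\bar v)\varphi^\beta$ is $\Sigma^c_{2\beta+1}$, and $(\forall\bar u)(\exists\bar v)\varphi^\beta$ is $\Pi^c_{2\beta+2}$, so the whole conjunction lives in $\Pi^c_{2\gamma}$.

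The only substantive thing to verify is that the assignment $\gamma\mapsto\varphi^\gamma$ is effective uniformly in a notation for $\gamma$ and the length of $\bar x$. This is handled by the standard recursion-theoretic argument: using the effective transfinite recursion (or Kleene's recursion theorem) along Kleene's $\mathcal{O}$, one produces an index for $\varphi^\gamma$ from indices for the $\varphi^\beta$, $\beta<\gamma$, and from the length parameter. Since the construction is purely syntactic and uses only finitely many operations on the $\varphi^\beta$'s, this presents no genuine obstacle.

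If anything counts as the main point to double-check, it is the parity arithmetic in the inductive step: one must confirm that $\Pi^c_{2\beta+2}\subseteq\Pi^c_{2\gamma}$ for every $\beta<\gamma$ (trivial when $\gamma$ is a limit, and equality when $\gamma=\beta+1$), so that the final computable conjunction over $\beta<\gamma$ really lands in $\Pi^c_{2\gamma}$ as claimed. Beyond that, the proof is a transparent adaptation of Lemma \ref{lem:equivalence:characterization}.
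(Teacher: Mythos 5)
Your proposal is correct and matches the paper's proof essentially verbatim: the same quantifier-free base case over the finitely many atomic formulas, the same inductive step replacing the conjunctions/disjunctions over tuples of elements by genuine $\forall$/$\exists$ quantifiers ranging over tuples of each length, and the same parity count landing in $\Pi^c_{2\gamma}$. The paper leaves the effective transfinite recursion implicit, which you spell out; otherwise the two arguments are identical.
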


\begin{proof}

Suppose that $\bar{x}$ and $\bar{y}$ have length $m$.
Let $\gamma = 0$ and let $At$ be the computable set of all atomic formulas on the first $m$ variables in the language $L$.  Then \[\varphi^0(\bar{x},\bar{y}) = \bigwedge_{\phi\in At}(\phi(\bar{x}) \leftrightarrow \phi(\bar{y})),\]
which is finitary quantifier-free.  
Suppose we have determined the formulas $\varphi^\beta(\bar{x},\bar{y})$ for all $\beta < \gamma$ and all appropriate pairs of tuples of variables $\bar{x},\bar{y}$.  Then
\[\varphi^\gamma(\bar{x},\bar{y}) = \bigwedge_{\beta < \gamma}
[\bigwedge_{\bar{u},\bar{v}}(\forall\bar{u})(\exists \bar{v}) \varphi^\beta(\bar{x},\bar{u},\bar{y},\bar{v})\ \&\ 
\bigwedge_{\bar{v},\bar{u}}(\forall\bar{v})\big(\exists\bar{u})\varphi^\beta(\bar{x},\bar{u},\bar{y},\bar{v})],\]
which is a $\Pi^c_{2\gamma}$ formula.
\end{proof}

The next lemma is well-known, and the proof is straightforward (see \cite{AK}).      

\begin{lem}
\label{lem:equivalence:formulas}

Let $\mathcal{A}$ and $\mathcal{B}$ be structures for the same countable language, and let $\bar{a}$ and $\bar{b}$ be tuples of the same length, in $\mathcal{A}$ and $\mathcal{B}$, respectively. Then for any countable ordinal $\gamma$, if $(\mathcal{A},\bar{a}) \sim^\gamma (\mathcal{B},\bar{b})$, then the $\Sigma^c_\gamma$ formulas true of $\bar{a}$ in $\mathcal{A}$ are the same as those true of $\bar{b}$ in $\mathcal{B}$.

\end{lem}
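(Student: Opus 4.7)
The plan is to prove the lemma by transfinite induction on $\gamma$. For the base case $\gamma = 0$, the relation $(\mathcal{A},\bar{a}) \sim^0 (\mathcal{B},\bar{b})$ says exactly that the atomic and negated atomic formulas true of $\bar{a}$ in $\mathcal{A}$ coincide with those true of $\bar{b}$ in $\mathcal{B}$, which is the required conclusion for the finitary quantifier-free (i.e.\ $\Sigma_0^c$) formulas.

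For the induction step, fix $\gamma > 0$ and assume the statement for all $\beta < \gamma$. Because the clauses (a) and (b) in the definition of $\sim^\beta$ are manifestly symmetric in $(\mathcal{A},\bar{a})$ and $(\mathcal{B},\bar{b})$, the induction hypothesis can be read in both directions, and by negating a $\Sigma_\beta^c$ formula one sees that $\sim^\beta$-equivalent tuples also share the same $\Pi_\beta^c$ formulas. Now suppose $(\mathcal{A},\bar{a}) \sim^\gamma (\mathcal{B},\bar{b})$ and let $\varphi(\bar{x})$ be a $\Sigma_\gamma^c$ formula with $\mathcal{A} \models \varphi(\bar{a})$. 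Write $\varphi$ as a c.e.\ disjunction of formulas $(\exists \bar{y})\psi_i(\bar{x},\bar{y})$, where each $\psi_i$ is $\Pi_{\beta_i}^c$ for some $\beta_i < \gamma$. A witnessing disjunct gives some $\bar{c}$ in $\mathcal{A}$ with $\mathcal{A} \models \psi_i(\bar{a},\bar{c})$. Clause (a) for $\beta_i < \gamma$ yields $\bar{d}$ in $\mathcal{B}$ with $(\mathcal{A},\bar{a},\bar{c}) \sim^{\beta_i} (\mathcal{B},\bar{b},\bar{d})$, and the induction hypothesis, together with the $\Pi_{\beta_i}^c$-preservation just observed, forces $\mathcal{B} \models \psi_i(\bar{b},\bar{d})$. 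Hence $\mathcal{B} \models \varphi(\bar{b})$, as desired. The reverse direction, that a $\Sigma_\gamma^c$ formula true of $\bar{b}$ in $\mathcal{B}$ must also be true of $\bar{a}$ in $\mathcal{A}$, is obtained in exactly the same way using clause (b).

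The only step demanding care is the $\Sigma/\Pi$ bookkeeping in the inductive step: one must check that preservation of $\Sigma_\beta^c$ formulas under $\sim^\beta$, together with the symmetry of $\sim^\beta$, really does give preservation of $\Pi_\beta^c$ formulas, so that the $\Pi_{\beta_i}^c$ formula $\psi_i$ witnessed in $\mathcal{A}$ is still witnessed in $\mathcal{B}$ by the back-and-forth partner $\bar{d}$. Beyond this accounting, no genuine obstacle arises; the argument is a routine transfinite version of the classical Karp-style correspondence between back-and-forth equivalence and infinitary elementary equivalence.
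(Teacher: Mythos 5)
Your proof is correct. The paper omits the argument entirely, citing it as well-known from Ash--Knight, and your transfinite induction --- using clause (a)/(b) to pull a witness across, plus the observation that preservation of $\Sigma^c_\beta$ formulas in both directions yields preservation of $\Pi^c_\beta$ formulas by negation --- is exactly the standard proof being referenced.
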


\subsection{$\sim^\gamma$-equivalence in linear orderings}

In a linear ordering, the $\sim^\gamma$-class of a tuple $\bar{a}$ is determined by the $\sim^\gamma$-classes of the intervals with endpoints in $\bar{a}$.  Let $\mathcal{A}$ and $\mathcal{B}$ be linear orderings.  Let $\bar{a} = a_1 < \ldots < a_n$ be a tuple in $\mathcal{A}$, and let $\bar{b} = b_1 < \ldots < b_n$ be a tuple in $\mathcal{B}$.  Let $I_0,\ldots,I_n$ and $J_0,\ldots,J_n$ be the intervals in $\mathcal{A}$ and $\mathcal{B}$ determined by $\bar{a}$ and $\bar{b}$; i.e., $I_0$ is the interval $(-\infty,a_1)$ in $\mathcal{A}$, $J_0$ is the interval $(-\infty,b_1)$ in $\mathcal{B}$, for $i < n$, $I_i$ is the interval $(a_i,a_{i+1})$ in $\mathcal{A}$, $J_i$ is the interval $(b_i,b_{i+1})$ in $\mathcal{B}$, $I_n$ is the interval $(a_n,\infty)$ in $\mathcal{A}$, and $J_n$ is the interval $(b_n,\infty)$ in $\mathcal{B}$. The next lemma is also well-known, with a straightforward proof.  In \cite{AK}, there is a similar result, with the asymmetric back-and-forth relations $\leq_\gamma$ replacing the symmetric relations $\sim^\gamma$.    

\begin{lem}

$(\mathcal{A},\bar{a})\sim^\gamma(\mathcal{B},\bar{b})$ iff for $i\leq n$, $I_i\sim^\gamma J_i$.

\end{lem}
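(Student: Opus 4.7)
The plan is to prove the lemma by induction on $\gamma$, using a standard interval-decomposition argument. Throughout, keep the convention that $\bar{a}$ and $\bar{b}$ are listed in strictly increasing order (any permutation can be moved to the outside), and think of each interval $I_i$ and $J_i$ as a linear ordering in the bare language $\{<\}$.

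For the base case $\gamma=0$, the statement is trivial: with $\bar{a}$ and $\bar{b}$ both strictly increasing of length $n$, the atomic formulas true of $\bar{a}$ coincide with those true of $\bar{b}$; and since the intervals carry no constants, $I_i\sim^0 J_i$ holds automatically for every $i$. For the inductive step, suppose the equivalence has been established for all $\beta<\gamma$.

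For the forward direction, assume $(\mathcal{A},\bar{a})\sim^\gamma (\mathcal{B},\bar{b})$, fix $i\le n$, and fix $\beta<\gamma$. Given any tuple $\bar{c}$ in $I_i$, the back-and-forth clause produces $\bar{d}$ in $\mathcal{B}$ with $(\mathcal{A},\bar{a}\bar{c})\sim^\beta(\mathcal{B},\bar{b}\bar{d})$. Because $\sim^\beta$ refines $\sim^0$, the ordering relations between the elements of $\bar{c}$ and those of $\bar{a}$ are preserved, forcing every coordinate of $\bar{d}$ to lie in $J_i$. Now apply the inductive hypothesis to the extended tuples $\bar{a}\bar{c}$ and $\bar{b}\bar{d}$: the intervals they carve out of $\mathcal{A}$ and $\mathcal{B}$ are pairwise $\sim^\beta$-equivalent. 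The intervals carved out of $I_i$ by $\bar{c}$ (together with the two end intervals $(-\infty,c_1)\cap I_i$ and $(c_k,\infty)\cap I_i$) are exactly among those, so re-applying the inductive hypothesis in the other direction yields $(I_i,\bar{c})\sim^\beta(J_i,\bar{d})$. A symmetric argument handles $\bar{d}$ first and produces a matching $\bar{c}$. Hence $I_i\sim^\gamma J_i$.

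For the backward direction, assume $I_i\sim^\gamma J_i$ for each $i\le n$, fix $\beta<\gamma$, and take any tuple $\bar{c}$ in $\mathcal{A}$. Partition the coordinates of $\bar{c}$ according to whether they lie in $\bar{a}$ or in one of the open intervals $I_i$; for each interval $I_i$, collect the corresponding sub-tuple $\bar{c}^{(i)}$. Using $I_i\sim^\gamma J_i$ we choose $\bar{d}^{(i)}$ in $J_i$ with $(I_i,\bar{c}^{(i)})\sim^\beta(J_i,\bar{d}^{(i)})$, and match any coordinates of $\bar{c}$ that equal some $a_j$ to the corresponding $b_j$. Reassembling these pieces in the same coordinate order yields $\bar{d}$ in $\mathcal{B}$. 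The inductive hypothesis, applied to the tuples $\bar{a}\bar{c}$ and $\bar{b}\bar{d}$, now packages the interval-level equivalences $(I_i,\bar{c}^{(i)})\sim^\beta(J_i,\bar{d}^{(i)})$ into $(\mathcal{A},\bar{a}\bar{c})\sim^\beta(\mathcal{B},\bar{b}\bar{d})$. The symmetric step starting from $\bar{d}$ is identical. This gives $(\mathcal{A},\bar{a})\sim^\gamma(\mathcal{B},\bar{b})$, completing the induction.

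The main obstacle is purely bookkeeping: one must be careful when decomposing an arbitrary tuple $\bar{c}$ into its interval-components and later reassembling the $\bar{d}^{(i)}$ in the correct coordinate order, especially when some coordinates of $\bar{c}$ coincide with coordinates of $\bar{a}$ or when elements appear in non-increasing order along the tuple. Once this indexing is set up, each inductive step reduces to a single, transparent invocation of the inductive hypothesis in each direction.
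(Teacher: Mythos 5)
Your proof is correct; the paper itself omits the argument, calling the lemma well-known and pointing to the analogous result in \cite{AK} for the asymmetric back-and-forth relations, and your induction on $\gamma$ --- splitting a tuple $\bar{c}$ into its interval components, matching each piece inside the corresponding $J_i$, and invoking the inductive hypothesis at level $\beta$ in both directions to reassemble --- is exactly the standard argument being alluded to. The only point worth making explicit in a write-up is the one you already flag: that $\sim^\beta$ (even for $\beta=0$) preserves the atomic order relations between the new coordinates and $\bar{a}$, which is what confines the matching tuple $\bar{d}$ to the correct intervals $J_i$ and handles coordinates of $\bar{c}$ that coincide with entries of $\bar{a}$.
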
 

\subsection{More on the orderings $L(G)$}     

We return to the orderings of form $L(G)$.  In the next subsection, we will prove that there do not exist $L_{\omega_1\omega}$ formulas that, for all $G$, interpret $G$ in $L(G)$.  Roughly speaking, the outline is as follows.  We assume that there are such formulas.  The formulas are $\Sigma_\alpha$, for some countable ordinal $\alpha$.  Moreover, they are $X$-computable $\Sigma_\alpha$ for some $X$ such that $\alpha < \omega_1^X$.  Taking $G$ to be the ordering $\omega_1^X$, we will produce tuples $\bar{b},\bar{c},\bar{b}'$ in $L(G)$ representing elements $a,e,a'$ of $G$ such that $\bar{b},\bar{c}\sim^\gamma \bar{c},\bar{b}'$, although in $G$, we have $a < e$ and $a' < e$.  This is a contradiction.  The current subsection gives several lemmas about the relations $\sim^\gamma$ on tuples in $L(G)$, and about automorphisms of $L(G)$.  These lemmas are what we need to produce the tuples $\bar{b},\bar{c},\bar{b}'$.             

To start off, we note that if $a_1,a_2 \sim^1 b_1,b_2$, then the sizes of the intervals $(a_1,a_2)$ and $(b_1,b_2)$ match.  Moreover, if $a \sim^2 b$, then $a$ and $b$ belong to maximal discrete intervals of the same size.  

\begin{lem}  
\label{lem:intervals:merge}
Let $I = (b,b')$, where $b < b'$, and let $J = (c,c')$, where $c < c'$.  Suppose $b\sim^\gamma c$ and $b'\sim^\gamma c'$, where some $b^*\in I$ and some $c^*\in J$ each have length $2$.  Then $I\sim^\gamma J$.

\end{lem}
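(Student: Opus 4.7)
The plan is to prove Lemma \ref{lem:intervals:merge} by a direct back-and-forth argument on $\sim^\gamma$, using the length-$2$ points $b^{*},c^{*}$ as anchors that let us combine the two separate endpoint hypotheses $b\sim^\gamma c$ and $b'\sim^\gamma c'$. No induction on $\gamma$ of Lemma \ref{lem:intervals:merge} itself is needed; the recursive content comes entirely from the $\sim^\gamma$ back-and-forth we already have on the endpoints inside $L(G)$. By symmetry of the hypotheses in $I$ and $J$, I treat only the forward step: given $\beta<\gamma$ and an ordered tuple $\bar e=(e_1<\cdots<e_k)$ in $I$, I produce an ordered tuple $\bar f=(f_1<\cdots<f_k)$ in $J$ with $(I,\bar e)\sim^\beta(J,\bar f)$. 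By the subinterval lemma for linear orderings stated at the end of the preceding subsection, this reduces to matching each subinterval of $I$ determined by $\bar e$ with the corresponding subinterval of $J$ determined by $\bar f$ at level $\sim^\beta$.

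Without loss of generality insert $b^{*}$ into $\bar e$, at position $j$, so $\bar e=(e_1,\dots,e_{j-1},b^{*},e_{j+1},\dots,e_k)$; forgetting this extra parameter at the end preserves $\sim^\beta$. For the left half, apply the $\sim^\gamma$ back-and-forth on $(L(G),b),(L(G),c)$ to the tuple $(e_1,\dots,e_{j-1},b^{*})$ to obtain $(f_1,\dots,f_{j-1},c^{**})$ in $L(G)$ with
\[
(L(G),b,e_1,\dots,e_{j-1},b^{*})\sim^\beta(L(G),c,f_1,\dots,f_{j-1},c^{**}).
\]
We may assume $\beta\geq 2$ (for $\beta<2$ the conclusion is trivial by matching order types), so the $\Pi_2$ order-language property ``is isolated'' transfers from $b^{*}$ to $c^{**}$; hence $c^{**}$ is a length-$2$ element of $L(G)$, necessarily above $c$. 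Now normalize: following the Lemma \ref{lem10} recipe, define an automorphism $h(x\sigma)=\pi(x)\sigma$ of $L(G)$, where $\pi\in\mathrm{Aut}(\mathbb{Q},<,(A_i)_{i\in\omega})$ fixes the first term of $c$ and maps the first term of $c^{**}$ to that of $c^{*}$; such $\pi$ exists by ultrahomogeneity of $(\mathbb{Q},<,(A_i))$, since the partial map is $<$- and $A_i$-preserving on three points. Then $h$ fixes $c$ and carries $c^{**}$ to $c^{*}$, producing a $\sim^\beta$-equivalent tuple $(f_1',\dots,f_{j-1}',c^{*})$ with all $f_i'$ inside $(c,c^{*})$. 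Mirror this on the right half using $b'\sim^\gamma c'$ and a Lemma \ref{lem10}-style automorphism of $L(G)$ fixing $c'$, yielding $(f_{j+1}',\dots,f_k')$ in $(c^{*},c')$. Glue into $\bar f=(f_1',\dots,f_{j-1}',c^{*},f_{j+1}',\dots,f_k')$ in $J$. The subinterval lemma applied inside $L(G)$ to each half-equivalence shows that every subinterval of $I$ determined by $\bar e$ is $\sim^\beta$-equivalent to the corresponding subinterval of $J$ determined by $\bar f$; a final invocation of the subinterval lemma with $\mathcal A=I$, $\mathcal B=J$ gives $(I,\bar e)\sim^\beta(J,\bar f)$. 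Since $\beta<\gamma$ was arbitrary and the backward round is symmetric, $I\sim^\gamma J$.

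The main obstacle is the normalization of the raw back-and-forth match $c^{**}$ onto the specific length-$2$ anchor $c^{*}$ sitting inside $J$. The endpoint hypothesis $b\sim^\gamma c$ by itself supplies only some length-$2$ match above $c$, not the particular $c^{*}$ that lies in $J$; reconciling this requires the Lemma \ref{lem10}-style automorphisms of $L(G)$ together with ultrahomogeneity of $(\mathbb{Q},<,(A_i))$, and depends on length $2$ being preserved under $\sim^\beta$ for $\beta\geq 2$. Once the two halves are aligned on $c^{*}$ they glue cleanly and the subinterval lemma assembles the final equivalence.
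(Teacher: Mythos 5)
Your argument is correct and follows essentially the same route as the paper's proof: split the tuple at the length-$2$ anchor $b^*$, match each half using the endpoint equivalences $b\sim^\gamma c$ and $b'\sim^\gamma c'$, note that having length $2$ transfers under $\sim^\beta$ for $\beta\geq 2$, realign the resulting length-$2$ match onto $c^*$ via a first-coordinate automorphism, and glue with the subinterval lemma. The only place you are lighter than the paper is the cases $\beta\leq 1$, which the paper treats separately (for $\beta=1$ one must match the sizes of the finite subintervals adjacent to the endpoints, using $b\sim^2 c$), but the hypotheses available make those cases go through as you claim.
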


\begin{proof} 

Suppose $\beta < \gamma$.  Take $\bar{d}$ in $I$.  We want $\bar{e}$ in $J$ such that \\
$(I,\bar{d})\sim^\beta(J,\bar{e})$.  We consider the cases $\beta = 0$, $\beta = 1$, and $\beta\geq 2$.  

\bigskip
\noindent
\textbf{Case 1}:  Suppose $\beta = 0$.  The fact that $J$ contains an element of length $2$ implies that it is an infinite interval.  We choose $\bar{e}$ in this interval ordered in the same way as $\bar{d}$.

\bigskip
\noindent
\textbf{Case 2}:  Suppose $\beta = 1$.  The tuple $\bar{d}$ partitions the interval $I = (b,b')$ into sub-intervals $I_0,\ldots,I_m$.  We need $\bar{e}$ partitioning $J$ into sub-intervals $J_0,\ldots,J_m$ of the same sizes.  The first few intervals $I_i$ may be finite.  Since $b\sim^2 c$, we can match these intervals.  Similarly, we can match the last few intervals, if these are finite.  For simplicity, we suppose that the intervals $I_0$ and $I_m$ are both infinite.  The tuple $\bar{d}$ is automorphic to a tuple $\bar{d}'$ lying entirely to the right of $c$, with first element infinitely far from $c$.  Let $d'$ be infinitely far to the right of the last term of $\bar{d}'$.  By Lemma \ref{lem10}, there is an automorphism of the interval $(c,\infty)$ taking $\bar{d}',d'$ to some $\bar{e},e'$ in the interval $(c,c')$.  We let the $J_i$'s be the sub-intervals of $J$ determined by $\bar{e}$.  These have the desired sizes.      

\bigskip
\noindent
\textbf{Case 3}:  Suppose $\beta\geq 2$.  We may suppose that $\bar{d} = \bar{d}_1,b^*,\bar{d}_2$.  The intervals $(b,\infty)$ and $(c,\infty)$ are $\sim^\gamma$-equivalent.  Therefore, we have $\bar{e}_1,c^{**}$ in $(c,\infty)$ $\sim^\beta$-equivalent to $\bar{d}_1,b^*$ in $(b,\infty)$.
Since $\beta \geq 2$, we have that $c^{**}$ has length $2$.
Let $p$ be the first term of $c$, let $r$ be the first term of $c^*$, and let $q$ be the first term of $c^{**}$.  Let $f$ be a permutation of $\mathbb{Q}$, preserving the order and the $A_i$'s, fixing $p$ and taking $q$ to $r$.  We have an automorphism $g$ of $(c,\infty)$ (or of $L(G)$) that takes $x\sigma$ to $f(x)\sigma$.  Let $\bar{e}_1'$ be $g(\bar{e}_1)$.  The sub-intervals of $I$ (or of $(b,\infty)$) determined by $\bar{d}_1,b^*$ are $\sim^\beta$ equivalent to the sub-intervals of $(c,\infty)$ determined by $\bar{e}_1,c^{**}$.  These are isomorphic to the sub-intervals determined by $\bar{e}_1',g(c^{**})$. Thus, the sub-intervals of $(b,\infty)$ determined by $\bar{d}_1,b^*$ are $\sim^\beta$-equivalent to the sub-intervals of $(c,\infty)$ determined by $\bar{e}_1',c^*$.   

In a similar way, we get $\bar{e}_2'$ such that the sub-intervals of $(c^*,\infty)$ determined by $c^*,\bar{e}_2'$ are $\sim^\beta$-equivalent to those determined by $b^*,\bar{d}_2$ in $(b^*,\infty)$.  We let $\bar{e}$ be $\bar{e}_1',\bar{e}_2'$.  All together, the sub-intervals of $(b,b')$ determined by $\bar{d}$ are $\sim^\beta$-equivalent to the corresponding sub-intervals of $(c,c')$ determined by $\bar{e}$.                                       
\end{proof}  

\begin{lem}
\label{lem18}

Let $\bar{b}_1,\bar{b}_2$, $\bar{c}_1,\bar{c}_2$ be increasing sequences in $L(G)$, where $\bar{b}_1\sim^\gamma\bar{c}_1$ and $\bar{b}_2\sim^\gamma\bar{c}_2$.  Suppose further that there is an element of length $2$ between the last element of $\bar{b}_1$ and the first element of $\bar{b}_2$, and there is an element of length $2$ between the last element of $\bar{c}_1$ and the first element of $\bar{c}_2$.  Then $\bar{b}_1,\bar{b}_2\sim^\gamma\bar{c}_1,\bar{c}_2$.  

\end{lem}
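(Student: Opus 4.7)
The plan is to apply the preceding characterization lemma for linear orderings, which says that $(L(G),\bar{a})\sim^\gamma(L(G),\bar{b})$ iff the corresponding intervals determined by $\bar{a}$ and $\bar{b}$ are pairwise $\sim^\gamma$-equivalent. Write $\bar{b}_1=(b_1^1<\cdots<b_m^1)$ and $\bar{b}_2=(b_1^2<\cdots<b_k^2)$, and use analogous notation for the $\bar{c}$-tuples (the lengths match because $\sim^\gamma$ is defined only between tuples of equal length). The intervals determined by the concatenation $\bar{b}_1,\bar{b}_2$ decompose as follows: all intervals from the interval-decomposition of $\bar{b}_1$ other than its rightmost $(b_m^1,\infty)$; all intervals from the interval-decomposition of $\bar{b}_2$ other than its leftmost $(-\infty,b_1^2)$; and a single ``middle'' interval $(b_m^1,b_1^2)$. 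The analogous decomposition applies on the $\bar{c}$-side.

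From $\bar{b}_1\sim^\gamma\bar{c}_1$ the characterization lemma already delivers the $\sim^\gamma$-equivalence of all intervals inherited from the $\bar{b}_1$ side, and $\bar{b}_2\sim^\gamma\bar{c}_2$ handles the intervals inherited from the $\bar{b}_2$ side. So the only new equivalence needing proof is $(b_m^1,b_1^2)\sim^\gamma(c_m^1,c_1^2)$. For this I would invoke Lemma~\ref{lem:intervals:merge}, whose hypotheses are (i) $b_m^1\sim^\gamma c_m^1$ and $b_1^2\sim^\gamma c_1^2$ as single elements of $L(G)$, and (ii) each of the two intervals contains an element of length~$2$. Hypothesis (ii) is precisely the extra assumption of the lemma. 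Hypothesis (i) follows from the standard fact that $\sim^\gamma$-equivalence passes to corresponding sub-tuples; I would record this as a preliminary observation, proved by a routine induction on $\gamma$ (the $\gamma=0$ case is immediate since atomic formulas on sub-tuples are atomic formulas on the full tuples, and the inductive step just picks up the witness supplied for the full tuples by the back-and-forth and discards the extra coordinates).

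Assembling: with the middle-interval equivalence in hand, the characterization lemma applied in the converse direction to the concatenated tuples $\bar{b}_1,\bar{b}_2$ and $\bar{c}_1,\bar{c}_2$ yields $\bar{b}_1,\bar{b}_2\sim^\gamma\bar{c}_1,\bar{c}_2$, as required.

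The main obstacle is organizational rather than substantive: one has to be careful that the interval decompositions on the two sides really line up index-by-index, that sub-tuple monotonicity of $\sim^\gamma$ correctly supplies the $1$-tuple hypotheses that Lemma~\ref{lem:intervals:merge} demands, and that no interval is silently duplicated or omitted when passing from $\bar{b}_1$ and $\bar{b}_2$ separately to their concatenation. All of the genuine combinatorial work—using a length-$2$ element to glue two $\sim^\gamma$-equivalent half-lines into a single $\sim^\gamma$-equivalent bounded interval—has already been absorbed into Lemma~\ref{lem:intervals:merge}, so no new argument inside $L(G)$ is required.
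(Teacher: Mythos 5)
Your proof is correct and follows essentially the same route as the paper's: decompose the concatenated tuples into their determined intervals, observe that $\bar{b}_1\sim^\gamma\bar{c}_1$ and $\bar{b}_2\sim^\gamma\bar{c}_2$ handle all intervals except the middle one via the interval characterization of $\sim^\gamma$ in linear orderings, and settle the middle interval by Lemma~\ref{lem:intervals:merge} using the length-$2$ elements. Your explicit remark that sub-tuple monotonicity of $\sim^\gamma$ supplies the single-element hypotheses $b_k\sim^\gamma c_k$ and $b_{k+1}\sim^\gamma c_{k+1}$ is a small point the paper uses silently.
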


\begin{proof}  

Say that $\bar{b}_1 = (b_1,\ldots,b_k)$, $\bar{b}_2 = (b_{k+1},\ldots,b_n)$, $\bar{c}_1 = (c_1,\ldots,c_k)$, and
$\bar{c}_2 = (c_{k+1},\ldots,c_n)$.  Let $I_i$ be the intervals determined by $\bar{b}_1,\bar{b}_2$, and let $J_i$ be the intervals determined by $\bar{c}_1,\bar{c}_2$, for $i\leq n$.  The fact that $\bar{b}_1\sim^\gamma\bar{c}_1$ implies that $I_i\sim^\gamma J_i$ for $i < k$.  The fact that $\bar{b}_2\sim^\gamma\bar{c}_2$ implies that $I_i\sim^\gamma J_i$ for $k <i\leq n$.  It remains to show that $I_k\sim^\gamma J_k$.  We have $b_k\sim^\gamma c_k$ and $b_{k+1}\sim^\gamma c_{k+1}$.  We have elements of length $2$ in each of the intervals $I_k$ and $J_k$.  Applying the previous lemma, we get the fact that $I_k\sim^\gamma J_k$.  Therefore, $\bar{b}_1,\bar{b}_2\sim^\gamma\bar{c}_1,\bar{c}_2$.   
\end{proof}   

\begin{lem}
\label{lem:equivalence:shape}

Suppose $\bar{b}$, $\bar{b}'$ are tuples in $L(G)$ of the same shape.  Let $\bar{a}$, $\bar{a}'$ be the full tuples from $G$ mentioned by the $b_i$'s, or the $b'_i$'s.  If $\bar{a}\sim^\gamma\bar{a}'$, then $\bar{b}\sim^\gamma\bar{b}'$.  

\end{lem}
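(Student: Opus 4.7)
The plan is to proceed by induction on $\gamma$, using the interval-decomposition lemma just before this subsection to reduce $\sim^\gamma$-equivalence of the tuples $\bar{b}$ and $\bar{b}'$ to $\sim^\gamma$-equivalence of the sub-intervals they determine. For the base case $\gamma=0$, the notion of shape encodes the order type of the tuple, so $\bar{b}$ and $\bar{b}'$ satisfy the same atomic formulas in the language of linear orderings; no information about $\bar{a},\bar{a}'$ is needed.

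For the inductive step, fix $\beta<\gamma$ and a challenge tuple $\bar{d}$ in $L(G)$ for $\bar{b}$; I need to produce $\bar{d}'$ in $L(G)$ so that $(\bar{b},\bar{d})\sim^\beta(\bar{b}',\bar{d}')$, and symmetrically. First I would extract the tuple $\bar{e}$ in $G$ consisting of the new elements mentioned by $\bar{d}$ beyond those already mentioned by $\bar{b}$. Since $\bar{a}\sim^\gamma\bar{a}'$, the back-and-forth provides $\bar{e}'$ in $G$ with $\bar{a}\bar{e}\sim^\beta\bar{a}'\bar{e}'$. The goal is then to realize inside $L(G)$ a tuple $\bar{d}'$ whose elements mention precisely $\bar{e}'$ in the position-by-position manner that $\bar{d}$ mentions $\bar{e}$, while $(\bar{b}',\bar{d}')$ has the same shape as $(\bar{b},\bar{d})$. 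Once this is done, the inductive hypothesis applied at level $\beta$ yields $(\bar{b},\bar{d})\sim^\beta(\bar{b}',\bar{d}')$, as required.

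The construction of $\bar{d}'$ goes interval by interval. For a $d_k$ lying in the same maximal discrete interval as some $b_j$, I would place $d'_k$ in the analogous position of the discrete interval around $b'_j$; this $d'_k$ automatically mentions $\bar{a}'_j$ and the shape is preserved. For a $d_k$ lying in an infinite interval $I_i$ between consecutive $b_j,b_{j+1}$, the shape tells me the length of $d_k$, the size of the maximal discrete set in which $d_k$ sits, and the order in which the various $d_k$'s in $I_i$ appear; I would then realize these as sequences in $L(G)$ whose quaternary terms lie in the $A_{e'_?}$'s corresponding to the coordinates of $\bar{e}'$, using density of each $A_n$ in $\mathbb{Q}$ to obtain the required lexicographic ordering and lengths. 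The automorphism lemmas (Lemmas~\ref{lem10} and~\ref{lem11}), together with the merging lemma (Lemma~\ref{lem18}), give the flexibility to push the constructed elements into the desired sub-intervals while preserving both the shape and the mentioned tuples.

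The main obstacle is precisely this construction: arranging, in each infinite interval of $L(G)$ on the $\bar{b}'$-side, new finite sequences of prescribed length, that land in discrete sets of prescribed sizes, that appear in a prescribed relative order, and whose $A_n$-coordinates spell out exactly $\bar{e}'$ rather than $\bar{e}$. The density of each $A_n$ and the rich automorphism group of $(\mathbb{Q},<,(A_n))$ make this feasible, but the bookkeeping---in particular ensuring that an element of length $2$ sits in each infinite gap of the new tuple so that Lemma~\ref{lem18} applies when we pass back to $\sim^\gamma$-equivalence of the full intervals---is where the real care is required.
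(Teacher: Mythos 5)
Your proposal is correct and follows essentially the same route as the paper's proof: induct on $\gamma$, answer a challenge $\bar{d}$ in $L(G)$ by pulling out the newly mentioned elements of $G$, responding to them via $\bar{a}\sim^\gamma\bar{a}'$, realizing the response as a tuple $\bar{d}'$ so that $\bar{b}',\bar{d}'$ has the same shape as $\bar{b},\bar{d}$, and invoking the induction hypothesis at level $\beta$. The paper simply asserts that such a $\bar{d}'$ exists (from the equality of shapes and density of the $A_n$'s) where you elaborate the realization step; your closing worry about invoking Lemma~\ref{lem18} is unnecessary, since the induction hypothesis applied to the extended tuples already yields the required equivalence.
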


\begin{proof}

We proceed by induction on $\gamma$.  For $\gamma = 0$, the statement is trivially true.  Supposing that the statement holds for $\beta < \gamma$, we show it for $\gamma$.  Suppose $\bar{a}\sim^\gamma\bar{a}'$.  We will have $\bar{b}\sim^\gamma\bar{b}'$ provided that for all $\beta < \gamma$, 
  
\begin{enumerate}

\item  for any $\bar{d}$, there is some $\bar{d}'$ such that $\bar{b},\bar{d}\sim^\beta\bar{b}',\bar{d}'$, and 

\item  for any $\bar{d}'$, there is some $\bar{d}$ such that $\bar{b},\bar{d}\sim^\beta\bar{b}',\bar{d}'$.  

\end{enumerate}
By symmetry, it is enough to prove (1).  Say that $\bar{c}$ is the tuple of elements of $G$ mentioned in the 
$d_i$'s and not in $\bar{a}$.  Since $\bar{a}\sim^\gamma\bar{a}'$ in $G$, there is a tuple $\bar{c}'$ such that 
$\bar{a},\bar{c}\sim^\beta\bar{a}',\bar{c}'$.  In $L(G)$, we choose $\bar{d}'$, so that the ordering and shape of 
$\bar{b}',\bar{d}'$ matches that of $\bar{b},\bar{d}$, and for each $d'_i$, the tuple $\bar{a}',\bar{c}'$ mentioned in $d'_i$ corresponds to the one from $\bar{a},\bar{c}$ mentioned in $d_i$.  Using the fact that $\bar{b}'$ and 
$\bar{b}$ have the same shape, we can see that such $\bar{d}'$ exist.  By the induction hypothesis, we have 
$\bar{b},\bar{d}\sim^\beta\bar{b}'\bar{d}'$.     
\end{proof}  

\begin{defn}

We say that $\mathcal{A}$ is a computable infinitary substructure of $\mathcal{B}$ if $\mathcal{A}$ is a substructure of $\mathcal{B}$ and for all computable infinitary formulas $\varphi(\bar{x})$ and all $\bar{a}$ in 
$\mathcal{A}$, $\mathcal{B}\models\varphi(\bar{a})$ iff $\mathcal{A}\models\varphi(\bar{a})$.  (The definition is the same as elementary substructure except that the formulas are not elementary (finitary) first order.)  

\end{defn}

\begin{lem}
\label{lem:graphs:substructure}

Let $G_1$ and $G_2$ be directed graphs such that $G_1$ is a computable infinitary substructure of $G_2$.  Suppose also that $G_2$ is computable, so $L(G_2)$ is computable.  Then $L(G_1)$ is a computable infinitary substructure of $L(G_2)$.

\end{lem}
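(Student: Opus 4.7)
The plan is to establish, for every tuple $\bar{b}$ in $L(G_1)$ and every computable ordinal $\gamma$, the cross-structure equivalence $(L(G_1),\bar{b})\sim^\gamma(L(G_2),\bar{b})$; by Lemma \ref{lem:equivalence:formulas} this yields agreement of all $\Sigma^c_\gamma$ formulas (hence all computable infinitary formulas) on tuples from $L(G_1)$, which is the required conclusion. The inclusion $L(G_1)\subseteq L(G_2)$ is immediate: whether a finite $\mathbb{Q}^{<\omega}$-sequence qualifies as an element of $L(G)$ depends only on the atomic type realized in $G$ by its mentioned tuple, and the order is inherited from $\mathbb{Q}^{<\omega}$.

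The argument has two layers. At the graph level, I would prove by induction on $\gamma$ that for every $\bar{a}$ in $G_1$, $(G_1,\bar{a})\sim^\gamma(G_2,\bar{a})$. The forth direction is immediate: given $\bar{c}$ in $G_1$, take the same $\bar{c}$ in $G_2$ and invoke the inductive hypothesis. For the back direction, given $\bar{d}$ in $G_2$ and $\beta<\gamma$, I use that $G_2$ is computable: Lemma \ref{lem:equivalence:characterization} supplies a $\Pi^c_{2\beta}$ formula $\varphi^\beta_{\bar{a},\bar{d}}(\bar{x},\bar{y})$ characterizing the $\sim^\beta$-class of $(\bar{a},\bar{d})$ in $G_2$. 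Then $\exists\bar{y}\,\varphi^\beta_{\bar{a},\bar{d}}(\bar{a},\bar{y})$ is a true computable infinitary sentence about $\bar{a}$ in $G_2$, so by the substructure hypothesis it is true in $G_1$; let $\bar{c}$ in $G_1$ witness it. A second use of the substructure hypothesis, applied to $(\bar{a},\bar{c})\in G_1$, sends $\varphi^\beta_{\bar{a},\bar{d}}(\bar{a},\bar{c})$ back up to $G_2$, whence $(G_2,\bar{a},\bar{c})\sim^\beta(G_2,\bar{a},\bar{d})$ by Lemma \ref{lem:equivalence:characterization}; combining with $(G_1,\bar{a},\bar{c})\sim^\beta(G_2,\bar{a},\bar{c})$ from the inductive hypothesis and transitivity of $\sim^\beta$ closes the case.

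Second, I would lift the equivalence to linear orderings via a cross-structure analog of Lemma \ref{lem:equivalence:shape}: if $\bar{b}_1$ in $L(G_1)$ and $\bar{b}_2$ in $L(G_2)$ have the same shape and their mentioned tuples satisfy $(G_1,\bar{a}_1)\sim^\gamma(G_2,\bar{a}_2)$, then $(L(G_1),\bar{b}_1)\sim^\gamma(L(G_2),\bar{b}_2)$. The proof proceeds by induction on $\gamma$ in the style of Lemma \ref{lem:equivalence:shape}: at the back/forth step one takes an extension mentioning new graph-elements $\bar{e}$ on one side, uses the $G$-level back-and-forth to find $\sim^\beta$-equivalent $\bar{e}'$ on the other side, and then constructs the corresponding new elements of $L$ with matching shape by choosing suitable rationals from the dense sets $A_i$ (the fact that $\sim^\beta$ implies $\sim^0$, i.e.\ agreement of atomic types, guarantees the new elements really belong to the target $L(G_i)$). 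Applying this lemma with $\bar{b}_1=\bar{b}_2=\bar{b}$ and $\bar{a}_1=\bar{a}_2=\bar{a}$ gives the desired conclusion.

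The technical obstacle is to verify that $\bar{b}$ has the same shape as a tuple of $L(G_1)$ and as a tuple of $L(G_2)$. The order type, the lengths of the $b_i$ as $\mathbb{Q}^{<\omega}$-sequences, their positions in their maximal discrete intervals, and the sizes of finite intervals between consecutive $b_i$ are intrinsic and agree. The sensitive item is the number $k_i$ encoding the length $2k_i+2$ of a shortest element of $L(G)$ in $[b_i,b_{i+1}]$: a priori, $L(G_2)$ could contain a shorter element mentioning a tuple whose atomic type is realized in $G_2$ but not in $G_1$. The graph-level equivalence rules this out, since $(G_1,\bar{a})\sim^\gamma(G_2,\bar{a})$ for any $\gamma\geq 1$ forces $G_1$ and $G_2$ to realize exactly the same atomic types over $\bar{a}$; combined with the density of the $A_i$'s, this lets us reproduce any element of $L(G_2)$-length $2k+2$ in $[b_i,b_{i+1}]$ by an element of $L(G_1)$ of the same length. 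Hence the $k_i$'s, and so the full shape, match, and the cross-structure shape lemma applies.
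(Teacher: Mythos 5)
Your argument is correct in outline, but it takes a genuinely different route from the paper. The paper applies the Tarski--Vaught test directly: it suffices to show that whenever $L(G_2)\models\psi(\bar{b},d)$ with $\bar{b}$ in $L(G_1)$, a witness $d'$ can be found in $L(G_1)$. Everything is then evaluated inside $L(G_2)$ alone: the new graph elements mentioned by $d$ are pulled back into $G_1$ using the $\sim^\alpha$-characterizing formulas of Lemma \ref{lem:equivalence:characterization} together with the hypothesis on $G_1\subseteq G_2$ (this ingredient you share), and the already-proved within-structure Lemma \ref{lem:equivalence:shape} is applied verbatim in $L(G_2)$ to conclude $\bar{b},d\sim^\alpha\bar{b},d'$ there. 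You instead build the full cross-structure equivalence $(L(G_1),\bar{b})\sim^\gamma(L(G_2),\bar{b})$, which is a stronger intermediate statement but costs more: you must prove a two-structure analogue of Lemma \ref{lem:equivalence:shape}, and you must verify that the shape of a tuple of $L(G_1)$ is the same whether computed in $L(G_1)$ or in $L(G_2)$ --- an absoluteness issue the paper's route sidesteps entirely because it never computes shapes in $L(G_1)$. To your credit, you correctly isolate the one non-trivial entry of the shape (the numbers $k_i$ recording shortest lengths in intervals) and give the right reason it is absolute, namely that $(G_1,\bar{a})\sim^1(G_2,\bar{a})$ forces the same atomic types over the relevant initial segments to be realized, so density of the $A_i$'s reproduces any short element of $L(G_2)$ inside $L(G_1)$. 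Your cross-structure shape lemma is left at the level of a sketch, but no more so than the paper's own proof of Lemma \ref{lem:equivalence:shape}; if you want the lighter argument, run Tarski--Vaught in $L(G_2)$ and reuse the existing lemmas unchanged.
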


\begin{proof}

Note that $L(G_1)$ is a substructure of $L(G_2)$.  The Tarski-Vaught test was originally stated for elementary substructure, but it also works for computable infinitary substructure.  To show that $L(G_1)$ is a computable infinitary substructure of $L(G_2)$, it is enough to show that for any computable infinitary formula $\psi(\bar{x},u)$, if $L(G_2)\models\psi(\bar{b},d)$, where $\bar{b}$ is in $L(G_1)$, then $L(G_2)\models\psi(\bar{b},d')$ for some $d'\in L(G_1)$.
  
Say that $\psi$ is a $\Pi^c_\alpha$ formula.  Suppose $\bar{b}$ mentions $\bar{a}$ from $G_1$.  The tuple from $G_1$ mentioned by $d$ may include some elements from $\bar{a}$, plus some further elements $\bar{c}$.  By Lemma~\ref{lem:equivalence:characterization}, we have a computable infinitary formula $\varphi^\alpha_{\bar{a},\bar{c}}(\bar{x},\bar{y})$ defining in $G_2$ the $\sim^\alpha$-class of $\bar{a},\bar{c}$.  By the Tarski-Vaught test, there is some $\bar{c}'$ in $G_1$ such that $G_2 \models \varphi^\alpha_{\bar{a},\bar{c}}(\bar{a},\bar{c}')$.  Then in $G_2$, $\bar{a},\bar{c} \sim^\alpha \bar{a},\bar{c}'$.  Say that $\bar{u}$ is the tuple in $G_2$ mentioned by $d$.  Each $u_i$ is in $\bar{a},\bar{c}$.  Let $\bar{v}$ be the tuple in $G_1$ such that if $u_i\in \bar{a}$, then $v_i = u_i$ and if $u_i\in\bar{c}$, then $v_i$ is the element of $\bar{c}'$ corresponding to $\bar{c}$.  Thus, $\bar{u}\sim^\alpha\bar{v}$. We choose $d'$, mentioning the tuple $\bar{v}$, such that $\bar{b},d$ and $\bar{b},d'$ have the same ordering and the same shape.  Then by Lemma~\ref{lem:equivalence:shape}, $\bar{b},d\sim^\alpha\bar{b},d'$.  By Lemma~\ref{lem:equivalence:formulas}, we conclude that $L(G_2)\models\psi(\bar{b},d')$, as required.             
\end{proof}

\subsection{Proof of Theorem \ref{main}}

Theorem \ref{main} says that there are no $L_{\omega_1\omega}$-formulas that, for all directed graphs $G$, define an interpretation of $G$ in $L(G)$.  We introduce the ideas of the proof in a warm-up result.  Among the directed graphs are the linear orderings.  The Harrison ordering $H$ has order type $\omega_1^{CK}(1+\eta)$.  While $\omega_1^{CK}$ has no computable copy, $H$ does have a computable copy.  It is well known that $H$ and $\omega_1^{CK}$ satisfy the same computable infinitary sentences.  In fact, they satisfy the same $\Pi_\alpha$ sentences of $L_{\omega_1\omega}$ for all computable ordinals $\alpha$. 

Let $I$ be the initial segment of $H$ of order type $\omega_1^{CK}$.  Thinking of $H$ as a directed graph, we can form the linear orderings $L(H)$ and $L(I)$.
By Proposition \ref{pr:comp-effective-int}, just because $H$ has a computable copy, it is effectively interpreted in every structure $\mathcal{B}$.
Our warm-up result will say that there are no computable infinitary formulas that define an interpretation of $H$ in $L(H)$ and also define an interpretation of $I$ in $L(I)$. 
\begin{prop}
  
$L(I)$ is a computable infinitary substructure of $L(H)$.

\end{prop}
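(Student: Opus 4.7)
The plan is to apply Lemma \ref{lem:graphs:substructure} directly, with $G_1 = I$ and $G_2 = H$. Two hypotheses must be verified. First, $H$ must be computable as a directed graph---this is immediate, since $H$ is the Harrison ordering and was chosen to have a computable copy. Second, $I$ must be a computable infinitary substructure of $H$. Granting these, the lemma delivers $L(I) \preceq_{\infty\omega}^c L(H)$ at once.

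To establish $I \preceq_{\infty\omega}^c H$, I would prove the stronger statement that for every tuple $\bar{a}$ in $I$ and every computable ordinal $\alpha$, $(I,\bar{a}) \sim^\alpha (H,\bar{a})$. By Lemma \ref{lem:equivalence:formulas}, this guarantees that $\bar{a}$ satisfies the same $\Sigma^c_\alpha$ formulas in $I$ and in $H$, across all computable $\alpha$, which is exactly what it means for $I$ to be a computable infinitary substructure of $H$ (one could alternatively invoke the Tarski-Vaught criterion, but proving the $\sim^\alpha$-equivalence is cleaner).

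By the lemma on $\sim^\gamma$-equivalence in linear orderings applied in the previous subsection, $(I,\bar{a}) \sim^\alpha (H,\bar{a})$ reduces to checking that each pair of corresponding intervals determined by $\bar{a}$ is $\sim^\alpha$-equivalent. Writing $\bar{a} = a_1 < \ldots < a_n$, the intervals $(-\infty, a_1), (a_1, a_2), \ldots, (a_{n-1}, a_n)$ are literally the same in $I$ and in $H$, so only the tails require attention. The tail $(a_n, \infty)$ in $I$ is a final segment of $\omega_1^{CK}$ and so has order type $\omega_1^{CK}$, while the tail $(a_n, \infty)$ in $H$ is a final segment of $\omega_1^{CK}(1+\eta)$ and so has order type $\omega_1^{CK}(1+\eta)$. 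The whole reduction therefore comes down to the standard fact that $\omega_1^{CK} \sim^\alpha \omega_1^{CK}(1+\eta)$ for every computable ordinal $\alpha$---equivalently, that the Harrison ordering and the true first noncomputable ordinal are indistinguishable by any computable infinitary sentence.

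The only real obstacle is this last back-and-forth equivalence, but it is well-known (cf.\ \cite{AK}); the argument exploits that $\omega_1^{CK}\cdot\eta$ is a dense sum of copies of $\omega_1^{CK}$, so that at each stage of a back-and-forth of length $\alpha < \omega_1^{CK}$, any element from an ``extra'' copy in $H$ can be matched by a computable ordinal in $\omega_1^{CK}$ and vice versa. With this equivalence in hand, the proof is complete: combine the interval decomposition to obtain $(I,\bar{a})\sim^\alpha (H,\bar{a})$, apply Lemma \ref{lem:equivalence:formulas} to pass to computable infinitary formulas, and then invoke Lemma \ref{lem:graphs:substructure} to transfer the substructure relation from the directed graphs to the linear orderings they encode.
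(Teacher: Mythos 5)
Your proof is correct, and it follows the same overall skeleton as the paper's: both reduce the statement to showing that $I$ is a computable infinitary substructure of $H$ and then invoke Lemma~\ref{lem:graphs:substructure} to transfer this to $L(I)$ and $L(H)$. Where you diverge is in how that sub-step is verified. The paper's proof is a two-line argument: $I$ and $H$ satisfy the same computable infinitary sentences, and every element of $I$ is defined by a computable infinitary formula (namely $\varphi_\alpha(x)$ saying that $pred(x)$ has type $\alpha$); so for any formula $\psi(\bar{x})$ and tuple $\bar{a}$ in $I$, one passes to the sentence $(\exists\bar{x})(\bigwedge_i\varphi_{\alpha_i}(x_i)\ \&\ \psi(\bar{x}))$ and transfers it between the two structures, the defining formulas pinning down the witness. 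You instead prove the stronger tuple-wise statement $(I,\bar{a})\sim^\alpha(H,\bar{a})$ for all computable $\alpha$, via the interval-decomposition lemma for $\sim^\gamma$ in linear orderings: the bounded intervals and the initial interval determined by $\bar{a}$ literally coincide in $I$ and $H$ (since $I$ is an initial segment), and the two tails have types $\omega_1^{CK}$ and $\omega_1^{CK}(1+\eta)$ respectively, so everything reduces to the standard fact that these are $\sim^\alpha$-equivalent for all computable $\alpha$. Both routes ultimately rest on the same well-known property of the Harrison ordering; yours is more explicit and yields the quantitative back-and-forth information as a by-product, while the paper's definability argument is shorter and does not need the interval lemma. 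Either is acceptable here.
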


\begin{proof}

Since $I$ and $H$ satisfy the same computable infinitary sentences and every element of $I$ is defined by a computable infinitary formula, it follows that $I$ is a computable infinitary substructure of $H$.  We apply Lemma~\ref{lem:graphs:substructure} to conclude that $L(I)$ is a computable infinitary substructure of $H$.
\end{proof}

\begin{prop} [Warm-up]  
\label{prop12} 

There do not exist computable infinitary formulas that define an interpretation of $H$ in $L(H)$ and also define an interpretation of $I$ in $L(I)$.  

\end{prop}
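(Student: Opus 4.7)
The plan is to assume, for contradiction, that such computable infinitary formulas exist, of complexity bounded by $\Sigma^c_\alpha$ and $\Pi^c_\alpha$ for some computable ordinal $\alpha$, and to produce tuples in $L(H)$ witnessing incompatible order behavior, along the lines of the strategy indicated just before this Proposition.

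First, I would choose elements $e_1 < e_2$ in $I$ and $e_3 \in H \setminus I$ that are pairwise $\sim^\alpha$-equivalent in $H$ viewed as a directed graph. This is possible by the homogeneity of the Harrison ordering at finite levels: one places $e_1,e_2$ in $I$ with left intervals of ordinal rank large enough to be $\sim^\alpha$-equivalent, each with right interval of type $\omega_1^{CK}(1+\eta)$ (the cofinal tail of $I$ plus $H \setminus I$), and chooses $e_3$ at the start of a suitable copy of $\omega_1^{CK}$ in the $\eta$-part of $H$ whose left and right intervals match those of $e_1,e_2$ at level $\alpha$.

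Second, I would use the two interpretations to find representing tuples: $\bar c_1,\bar c_2 \in L(I)^{<\omega}$ for $e_1,e_2$ via the $I$-interpretation in $L(I)$, and $\bar b \in L(H)^{<\omega}$ for $e_3$ via the $H$-interpretation. Because $e_3 \in H \setminus I$, and the computable infinitary substructure relationship between $L(I)$ and $L(H)$ (from the previous Proposition) forces the $L(I)$-tuples in $D$ to represent exactly $I$ under the $H$-interpretation, the tuple $\bar b$ must have some coordinate outside $L(I)$. Then I arrange $\bar b, \bar c_1, \bar c_2$ to share a common shape in $L(H)$, modifying each within its $\sim$-class if necessary, and by Lemma~\ref{lem11} position them in $<$-increasing order $\bar b < \bar c_1 < \bar c_2$ in $L(H)$ with length-$2$ separators between consecutive tuples. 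Lemma~\ref{lem:equivalence:shape} then yields $\bar b \sim^\alpha \bar c_1 \sim^\alpha \bar c_2$ in $L(H)$, and Lemma~\ref{lem18}, applied with the length-$2$ separators, gives $(\bar b, \bar c_1) \sim^\alpha (\bar c_1, \bar c_2)$ in $L(H)$.

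Finally, the contradiction: the interpretation formula $<^*(\bar x, \bar y)$ for the order on $H$ is $\Sigma^c_\alpha$. In $L(H)$, $<^*(\bar b,\bar c_1)$ holds iff $e_3 < e_1$ in $H$, which is false since $e_3 \in H \setminus I$ sits above $e_1 \in I$; while $<^*(\bar c_1,\bar c_2)$ holds iff $e_1 < e_2$, which is true by choice. But $(\bar b,\bar c_1) \sim^\alpha (\bar c_1,\bar c_2)$ implies they agree on all $\Sigma^c_\alpha$ formulas, a contradiction. The main obstacle I expect is the second step: matching three distinct $\sim$-classes with a common shape of same-length tuples in $L(H)$, all lying in $D$. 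This delicate step crucially uses both interpretation hypotheses. For the trivial interpretation of $H$ in $L(H)$ guaranteed by Proposition~\ref{pr:comp-effective-int}, no such triple exists since each $\sim$-class there is confined to a single length; the additional hypothesis of interpreting $I$ in $L(I)$ is precisely what must break this degeneracy.
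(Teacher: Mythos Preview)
Your overall shape---finding tuples $\bar b,\bar c_1,\bar c_2$ with $(\bar b,\bar c_1)\sim^\alpha(\bar c_1,\bar c_2)$ that disagree on the defining formula for $\textcircled{<}$---is the right target, and your use of Lemmas~\ref{lem11} and~\ref{lem18} at the end is correct. But the argument has a real gap, precisely at the step you flag as the ``main obstacle,'' and a related misconception makes the gap unfixable along the lines you sketch.

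The misconception is in your invocation of Lemma~\ref{lem:equivalence:shape}. That lemma compares two tuples in $L(G)$ via the $G$-tuples they \emph{mention} (in the sense of Definition~10), not via the $G$-elements they \emph{represent} under the interpretation. The elements $e_1,e_2,e_3$ that you carefully arrange to be $\sim^\alpha$-equivalent in $H$ have nothing to do with the $H$-tuples mentioned by $\bar c_1,\bar c_2,\bar b$; an interpretation tuple $\bar c_1\in D$ representing $e_1$ may mention any elements of $H$ whatsoever. So even if you could force the three tuples to share a shape, Lemma~\ref{lem:equivalence:shape} would give you no control over $\bar b\sim^\alpha\bar c_i$. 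Likewise, ``modifying each within its $\sim$-class'' cannot be expected to produce a common shape: different $\sim$-classes in $D$ may simply fail to contain tuples of any common length, let alone common shape. You correctly observe that the trivial interpretation of $H$ shows this is not automatic, but you give no mechanism by which the second hypothesis (interpretation of $I$ in $L(I)$) would supply one.

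The paper's proof avoids this entirely by reversing the order of quantifiers. Rather than fixing elements $e_i\in H$ and then searching for well-shaped representatives, it fixes a tuple $\bar b\in D$ representing some element of $H\setminus I$, takes its $\equiv^\gamma$-class $C$ (same shape \emph{and} $\sim^\gamma$-equivalent in $L(H)$), and uses the fact that $C$ is definable by a computable infinitary formula together with $L(I)\preccurlyeq L(H)$ to produce tuples in $C\cap L(I)$ representing arbitrarily large elements of $I$. This is Lemma~\ref{stronger}, and it is the key idea your proposal is missing: the common shape and the $\sim^\gamma$-equivalence come for free from the definability of $C$ and the elementary-substructure relationship, not from any manipulation of specific representatives.
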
      

\begin{proof}

Suppose there are computable infinitary formulas that define an interpretation of $H$ in $L(H)$, and also define an interpretation of $I$ in $L(I)$.  Say $D$, $\sim$, and \textcircled{$<$} are the sets of tuples defined by these formulas in $L(H)$.  We note that all elements of $I$ are represented by tuples from $D$ that are in $L(I)$, and all tuples from $D$ that are in $L(I)$ represent elements of $I$.  We can translate computable infinitary formulas describing $H$ and its elements into computable infinitary formulas about tuples in $L(H)$, referring to the formulas that define $D$, $\sim$, and \textcircled{$<$}.   

For each computable ordinal $\alpha$, we have a formula $\varphi_\alpha(x)$ saying of an element $x$ in $H$ that $pred(x) = \{y:y < x\}$ has order type $\alpha$.  Let $\psi_\alpha(\bar{x})$ be the translation formula saying of a tuple $\bar{x}$ that it is in $D$ and the set of predecessors of the equivalence class of $\bar{x}$ has order type $\alpha$.  For each computable ordinal $\alpha$, there is a tuple in $D$ satisfying $\psi_\alpha(\bar{x})$ (for an appropriate $\bar{x}$).  Since $L(I)$ is a computable infinitary substructure of $L(H)$, some tuple from $D$ in $L(I)$ also satisfies $\psi_\alpha(\bar{x})$.  Moreover, each tuple from $D$ in $L(I)$ satisfies one of the formulas  $\psi_\alpha$.  Recall that the ordering $H$ is computable, and so is $L(H)$.  
We define equivalence relations $\equiv^\gamma$ on $D$.

\begin{defn}

For tuples $\bar{a}$ and $\bar{b}$ in $D$, let $\bar{a}\equiv^\gamma\bar{b}$ iff 

\begin{enumerate}

\item  $\bar{a}$ and $\bar{b}$ have the same shape and 

\item  $\bar{a}\sim^\gamma\bar{b}$.  

\end{enumerate}

\end{defn}

\noindent
\textbf{Fact}:  For each computable ordinal $\gamma$ and each $\bar{a}$ in $D$, the $\equiv^\gamma$-class of $\bar{a}$ is defined by a computable infinitary formula.  

\bigskip

We need one more lemma.

\begin{lem}
\label{stronger}

For each computable ordinal $\gamma$, there is a $\equiv^\gamma$-class $C$ such that there are arbitrarily large computable ordinals $\alpha$ for which some $\bar{b}$ in $C$ satisfies $\psi_\alpha$.    

\end{lem}

\begin{proof}

In $L(H)$, we have a tuple $\bar{b}$ in $D$ not satisfying any of the formulas $\psi_\alpha$ for computable ordinals $\alpha$.  Let $C$ be the $\equiv^\gamma$-class of $\bar{b}$.  Since $L(I)$ is a computable infinitary substructure of $L(H)$, and $C$ is defined by a computable infinitary formula, we must have tuples of $L(I)$ belonging to $C$ and satisfying $\psi_\alpha$ for arbitrarily large computable ordinals~$\alpha$.  
\end{proof}     

Suppose that the formulas defining $D$, \textcircled{$<$}, and $\sim$ are all $\Sigma^c_\gamma$.  Since $D$ may have no fixed arity, we mean that there is a computable sequence of $\Sigma^c_\gamma$ formulas defining the sets of $n$-tuples in $D$, and similarly for \textcircled{$<$} and $\sim$.  By Lemma~\ref{stronger}, there is a set $C\subseteq D$ in which all tuples have the same shape and are in the same $\sim^\gamma$-class---in particular, the tuples in $C$ all have the same arity.  We choose tuples $\bar{b}$ and $\bar{c}$ in $L(I)$, both belonging to $C$, such that $\bar{b}$ satisfies $\psi_\alpha$ and $\bar{c}$ satisfies $\psi_\beta$, where $\alpha < \beta$.  

By Lemma~\ref{lem11}, we may suppose that all elements of the tuple $\bar{b}$ lie to the left of the $<$-first element of $\bar{c}$, and the interval between the $<$-greatest element of $\bar{b}$ and the $<$-first element of $\bar{c}$ contains an element of length $2$.  Also, by the same lemma, we have a tuple $\bar{b}'$, automorphic to $\bar{b}$, such that all elements of $\bar{b}'$ lie to the right of the $<$-greatest element of $\bar{c}$, and the interval between the $<$-greatest element of $\bar{c}$ and the $<$-first element of $\bar{b}'$ contains an element of length $2$.  Since $\bar{b}$ satisfies $\psi_\alpha$ and $\bar{c}$ satisfies $\psi_\beta$, we should have $L(I) \models \bar{b}\textcircled{$<$}\bar{c}$.  Since $\bar{b}'$ is automorphic to $\bar{b}$, it should also satisfy $\psi_\alpha$, so we should have $L(I) \models \bar{b}'\textcircled{$<$}\bar{c}$.  Applying Lemma \ref{lem18}, we get the fact that $\bar{b},\bar{c}\sim^\gamma \bar{c},\bar{b}'$.  Therefore, since $L(I) \models \bar{b}\textcircled{$<$}\bar{c}$, and $\textcircled{$<$}$ is defined by a $\Sigma^c_\gamma$-formula, we have $L(I) \models \bar{c}\textcircled{$<$}\bar{b}'$.  This is the contradiction that we were expecting when we set out to prove Proposition \ref{prop12}.    
\end{proof} 

We have proved Proposition \ref{prop12}, saying that there do not exist computable infinitary formulas that define an interpretation both for the Harrison ordering $H$ in $L(H)$ and for the well-ordered initial segment $I$ in $L(I)$.  We assumed that there were computable infinitary formulas, say $\Sigma^c_\gamma$, defining both interpretations, and we arrived at a contradiction.  We used $H$ and $L(H)$ to arrive at a sequence of tuples $\bar{b}_\alpha$ in $L(I)$, representing arbitrarily large elements of $I$, and all having the same shape and satisfying the same computable $\Sigma^c_\gamma$ formulas.  We then used automorphisms of $L(I)$ to show that our proposed interpretation failed.  The next result says that, in fact, there do not exist computable infinitary formulas that define an interpretation for $I$ in $L(I)$.  Of course, $I$ is isomorphic to $\omega_1^{CK}$.                 

\begin{prop}
\label{prop6}

There is no interpretation of $\omega_1^{CK}$ in $L(\omega_1^{CK})$ defined by computable infinitary formulas.

\end{prop}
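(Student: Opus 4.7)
The plan is to adapt the warm-up (Proposition \ref{prop12}) by dropping the hypothesis that the same formulas also interpret $H$ in $L(H)$, and instead extracting, by applying the assumed formulas inside $L(H)$, a single tuple that satisfies no $\psi_\alpha$. Once such a tuple is in hand, the remainder of the warm-up argument runs unchanged.

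Suppose for contradiction that $\Sigma^c_\gamma$ formulas $\varphi_D$, $\varphi_\sim$, $\varphi_\prec$ define an interpretation of $I = \omega_1^{CK}$ in $L(I)$. Apply these same formulas inside $L(H)$ to obtain relations $D^*$, $\sim^*$, $\prec^*$. Since $L(I)$ is a computable infinitary substructure of $L(H)$, the computable infinitary sentences expressing that $\sim^*$ is an equivalence relation and that $\prec^*$ induces a linear order on $D^*/\sim^*$ transfer from $L(I)$ to $L(H)$; call the resulting linear ordering $L^*$. The same substructure property, applied directly to $\varphi_D$, $\varphi_\sim$, $\varphi_\prec$, shows that the $\sim^*$-classes of $L(I)$-tuples form an ordered subset of $L^*$ isomorphic to $\omega_1^{CK}$. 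The key new step is that $L^*$ cannot be well-ordered: $L(H)$ is computable and the defining formulas are $\Sigma^c_\gamma$ for a computable $\gamma$, so $L^*$ is hyperarithmetic, and every hyperarithmetic well-ordering has order type strictly less than $\omega_1^{CK}$, while $L^*$ order-embeds $\omega_1^{CK}$. Hence some $\bar{b}^* \in D^*$ in $L(H)$ has non-well-ordered $\prec^*$-predecessors, so $\bar{b}^*$ satisfies no $\psi_\alpha$ for any computable $\alpha$.

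From $\bar{b}^*$ the argument closes exactly as in the warm-up. The $\equiv^\gamma$-class $C$ of $\bar{b}^*$ is defined by a computable infinitary formula (shape via the $\Sigma^c_4$ description given earlier and $\sim^\gamma$-equivalence to $\bar{b}^*$ via the $\Pi^c_{2\gamma}$-formula of Lemma \ref{lem:equivalence:characterization}). If the tuples of $L(I)$ in $C$ satisfied only $\psi_\alpha$ for $\alpha < \alpha_0$ with $\alpha_0$ computable, this would be a computable infinitary sentence true in $L(I)$ and hence true in $L(H)$, contradicting the choice of $\bar{b}^*$. So $C$ contains tuples in $L(I)$ satisfying $\psi_\alpha$ for unboundedly many $\alpha < \omega_1^{CK}$. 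Choose $\bar{b},\bar{c} \in C \cap L(I)$ with $\bar{b} \models \psi_\alpha$ and $\bar{c} \models \psi_\beta$ for some $\alpha < \beta$, and use Lemma \ref{lem11} to place $\bar{b}$ and an automorphic copy $\bar{b}'$ so that $\bar{b} < \bar{c} < \bar{b}'$ in $L(I)$ with an element of length $2$ in each gap. Lemma \ref{lem18} gives $\bar{b},\bar{c} \sim^\gamma \bar{c},\bar{b}'$; since $\prec$ is $\Sigma^c_\gamma$-definable, this forces $\bar{b} \prec \bar{c}$ iff $\bar{c} \prec \bar{b}'$. But $\bar{b}'$, being automorphic to $\bar{b}$, still represents the ordinal $\alpha < \beta$, so $\bar{b}' \prec \bar{c}$ while $\bar{b} \prec \bar{c}$, the desired contradiction.

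The main obstacle is the step showing that the transferred ordering $L^*$ inside $L(H)$ fails to be well-ordered: this is the substitute, in the absence of a hypothesis about the interpretation on $L(H)$, for the non-well-founded tuples the warm-up obtained for free. It rests on the interplay between computable infinitary substructure transfer and the non-existence of hyperarithmetic well-orderings of type $\omega_1^{CK}$.
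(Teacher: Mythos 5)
Your proof is correct, and it replaces the paper's key device with a genuinely different one. Both arguments reduce to producing, inside a structure satisfying the computable infinitary theory of $L(\omega_1^{CK})$, a tuple of $D$ that satisfies no $\psi_\alpha$; once that tuple exists, the transfer of the sentences ``some tuple in $C$ satisfies no $\psi_\beta$ for $\beta<\alpha_0$'' back to $L(I)$ and the automorphism endgame (Lemmas \ref{lem11} and \ref{lem18}) are identical to the warm-up, and you carry those out correctly. The paper manufactures the bad tuple by Barwise--Kreisel compactness: it writes a $\Pi^1_1$ theory $\Gamma$ describing a two-sorted model with a copy of the theory of $L(\omega_1^{CK})$, the interpreted ordering, and a constant above all computable ordinals, verifies that every $\Delta^1_1$ subset of $\Gamma$ is satisfiable, and takes $\bar{b}$ with $F(\bar{b})=c$. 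You instead evaluate the formulas directly in the computable structure $L(H)$: the transferred sentences make $(D^*,\prec^*)/\!\sim^*$ a genuine linear order, it order-embeds $\omega_1^{CK}$ via the $D$-tuples of the computable infinitary substructure $L(I)$, and it is hyperarithmetic because $\Sigma^c_\gamma$-definable relations over a computable structure are $\Sigma^0_\gamma$; since no hyperarithmetic well-ordering has type $\geq\omega_1^{CK}$, the interpreted order is not well-ordered and some $\bar{b}^*$ has non-well-ordered predecessors, hence satisfies no $\psi_\alpha$. The two devices are both incarnations of hyperarithmetic boundedness, but yours is more self-contained (no auxiliary model, no satisfiability criterion for $\Pi^1_1$ theories), while the paper's compactness argument is the one that generalizes most directly when relativizing to prove Theorem \ref{main}. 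Two small points to tighten: for injectivity of the embedding of $\omega_1^{CK}$ into the interpreted order of $L(H)$ you should invoke the definability of the complement of $\sim$ (or the transferred irreflexivity of $\prec$ modulo $\sim$); and your last sentence should conclude ``$\bar{b}'\prec\bar{c}$ while $\bar{c}\prec\bar{b}'$,'' which is the actual contradiction.
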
  

\begin{proof} Suppose we have an interpretation of $\omega_1^{CK}$ in $L(\omega_1^{CK})$, defined by computable infinitary formulas.  Say that the formulas that define the appropriate $D$, \textcircled{$<$}, and $\sim$ are $\Sigma^c_\gamma$.  Our assumption gives the fact that for a Harrison ordering with well-ordered initial segment $I$, these formulas interpret $I$ in $L(I)$.  However, the assumption does not say that they also interpret $H$ in $L(H)$.  Thus, we are not in a position to use the important Lemma \ref{stronger}.       

\bigskip

The following lemma is simple enough that we omit the proof.            

\begin{lem}

Let $\mathcal{A}$ be a computable structure.  If $\mathcal{B}$ satisfies the computable infinitary sentences true in $\mathcal{A}$, then the formulas $\varphi^\gamma_{\bar{d}}$ that define the $\sim^\gamma$-equivalence classes of all tuples in $\mathcal{A}$ also define the $\sim^\gamma$-equivalence classes of all tuples in $\mathcal{B}$.  Moreover, if $\mathcal{B}\models\varphi^\gamma_{\bar{d}}(\bar{b})$, then the $\Sigma^c_\gamma$-formulas true of $\bar{b}$ in $\mathcal{B}$ are the same as those true of $\bar{d}$ in $\mathcal{A}$.       

\end{lem} 

The next lemma gives the conclusion of Lemma \ref{stronger}.  The proof involves locating 
$\omega_1^{CK}$ inside a larger ordering similar to the Harrison ordering.     

\begin{lem}

In $L(\omega_1^{CK})$, there are tuples $\bar{d}_\alpha$, corresponding to arbitrarily large computable ordinals $\alpha$, such that all $\bar{d}_\alpha$ are in $D$, all have the same length and shape, all are $\sim^\gamma$-equivalent, and $\bar{d}_\alpha$ satisfies $\psi_\alpha$.

\end{lem}

\begin{proof} [Proof of lemma]

We use Barwise-Kreisel Compactness.  Let $\Gamma$ be a $\Pi^1_1$ set of computable infinitary sentences describing a structure
\[\mathcal{U} = (U_1\cup U_2,U_1,<_1,U_2,<_2,F,c)\] 
such that

\begin{enumerate}

\item $U_1$ and $U_2$ are disjoint sets, 

\item  $(U_1,<_1)$ is a linear ordering that satisfies the computable infinitary sentences true in 
$\omega_1^{CK}$ and $H$---since $H$ is computable, this is $\Pi^1_1$,

\item  $(U_2,<_2)$ satisfies the computable infinitary sentences true in $L(\omega_1^{CK})$---this is $\Pi^1_1$ since $L(H)$ is computable and $L(I)$ is a computable infinitary substructure of $L(H)$,

\item  $F$ is a function from $D^{U_2}$ to $U_1$ that induces an isomorphism between $(D^{U_2},\textcircled{$<$})/_{\sim^{U^2}}$ and $(U_1,<_1)$,

\item  $c$ is a constant in $U_1$ such that $c >_1 \alpha$ for all computable ordinals $\alpha$; i.e., there is a proper initial segment of $<_1$-$pred(c)$ of type $\alpha$.  

\end{enumerate}

Every $\Delta^1_1$ subset of $\Gamma$ is satisfied by taking copies of $\omega_1^{CK}$, $L(\omega_1^{CK})$, with an appropriate function $F$, and letting $c$ be a sufficiently large computable ordinal.  Therefore, the whole set $\Gamma$ has a model.  Let $\bar{b}$ be an element of $D^{U_2}$ such that $F(\bar{b}) = c$.
Let $C$ be the set of tuples of $U_2$ having the shape of $\bar{b}$ and $\sim^\gamma$-equivalent to $\bar{b}$.
Since $(U_2,<_2)$ satisfies the same computable infinitary sentences true in the computable structure $L(H)$,
by the lemma above, the $\sim^\gamma$-equivalence class of $\bar{b}$ is defined in $(U_2,<_2)$ by a computable infinitary formula.  For each computable ordinal $\alpha$, we have a computable infinitary sentence $\chi_\alpha$ saying that some tuple in $C$ does not satisfy $\psi_\beta$ for any $\beta < \alpha$.  The sentence $\chi_\alpha$ is true in our model of $\Gamma$, witnessed by $\bar{b}$ such that $F(\bar{b}) = c$.  Therefore, the sentence $\chi_\alpha$ is true also in $L(\omega_1^{CK})$, witnessed by some $\bar{b}'$.  Since our formulas define an interpretation of $\omega_1^{CK}$ in $L(\omega_1^{CK})$, the witness $\bar{b}'$ for $\chi_{\alpha}$ in $L(\omega^{CK}_1)$ must satisfy $\psi_\gamma$ for some $\gamma\geq\alpha$.
\end{proof}

Now, we can proceed as in the proof of Proposition \ref{prop12}.  We are working in $L(\omega_1^{CK})$.  We choose $\bar{b},\bar{c}$, from the sequence of $\bar{d}_\alpha$'s in the lemma, such that $\bar{b}\sim^\gamma\bar{c}$, where $\bar{b}$ satisfies $\psi_\alpha$ and $\bar{c}$ satisfies $\psi_\beta$, for $\alpha < \beta$.  By Lemma \ref{lem11}, we may suppose that the elements of $\bar{b}$ all lie to the left of the $<$-first element of $\bar{c}$, and the interval between the $<$-greatest element of $\bar{b}$ and the $<$-first element of $\bar{c}$ contains an element of length~$2$.  Since $\alpha < \beta$, we should have $L(\omega_1^{CK}) \models \bar{b} \textcircled{$<$} \bar{c}$.  We can take $\bar{b}'$ automorphic to $\bar{b}$ such that all elements of $\bar{b}'$ lie to the right of the $<$-greatest element of $\bar{c}$, and the interval between the $<$-greatest element of $\bar{c}$ and the $<$-first element of $\bar{b}'$ contains an element of length $2$.  Clearly, $L(\omega_1^{CK}) \models \bar{b}' \textcircled{$<$} \bar{c} $ since $\bar{b}'$ satisfies $\psi_\alpha(\bar{x})$.  Applying Lemma~\ref{lem18} we get the fact that $\bar{b},\bar{c} \sim^\gamma \bar{c},\bar{b}'$.  It follows that $L(\omega_1^{CK}) \models \bar{c} \mbox{\textcircled{$<$}} \bar{b}'$, which is a contradiction.
\end{proof}

We are ready to complete the proof of Theorem \ref{main}, saying that there is no tuple of $L_{\omega_1\omega}$-formulas that, for all directed graphs $G$, interprets $G$ in $L(G)$.  

\begin{proof} [Proof of Theorem \ref{main}]

Suppose that we have such formulas.  For some $X$, the formulas are $X$-computable infinitary.  Let $G$ be a linear ordering of type $\omega_1^X$. 
 Relativizing Proposition \ref{prop6}, we have the fact that $G$ is not interpreted in $L(G)$ by any $X$-computable formulas.  
\end{proof} 

The Friedman-Stanley embedding represents a uniform effective encoding of directed graphs in linear orderings.  We have seen that there is no uniform interpretation of the input graph in the output linear ordering.       

\begin{conj}

Let $\Phi$ be a Turing computable embedding of directed graphs in linear orderings.  There do not exist $L_{\omega_1\omega}$ formulas that, for all directed graphs $G$, define an interpretation of $G$ in $\Phi(G)$.     

\end{conj}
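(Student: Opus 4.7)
The plan is to imitate the proof of the Main Theorem, which handled the special case $\Phi = L$, for an arbitrary Turing computable embedding $\Phi$. Assume for contradiction that $\Phi$ is such an embedding and that a fixed tuple of $L_{\omega_1\omega}$-formulas interprets every directed graph $G$ in $\Phi(G)$. These formulas are $X$-computable $\Sigma_\gamma$ for some oracle $X$ and some countable $\gamma$. After relativizing to $X$, I would work with the $X$-Harrison ordering $H$ (so $\Phi(H)$ is $X$-computable) and its well-ordered initial segment $I$ of type $\omega_1^X$, each viewed as a directed graph.

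The first key step is to prove the analog of Lemma~\ref{lem:graphs:substructure} for $\Phi$: that $\Phi(I)$ satisfies the same $X$-computable infinitary sentences as $\Phi(H)$, or better, is a computable infinitary substructure. Granted this, the Barwise--Kreisel compactness argument from the proof of Proposition~\ref{prop6} goes through, yielding in $\Phi(I)$ a sequence of tuples $\bar{d}_\alpha$, for arbitrarily large computable ordinals $\alpha$, all lying in a single $\sim^\gamma$-class with $\bar{d}_\alpha$ representing the ordinal $\alpha$. The final step is to extract from this sequence tuples $\bar{b}, \bar{c}, \bar{b}'$ such that the pairs $(\bar{b},\bar{c})$ and $(\bar{c},\bar{b}')$ are themselves $\sim^\gamma$-equivalent in $\Phi(I)$, with $\bar{b}$ and $\bar{b}'$ representing ordinals strictly smaller than the one represented by $\bar{c}$. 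Because the interpreted order relation is $\Sigma^c_\gamma$-definable and $\sim^\gamma$-equivalent tuples satisfy the same $\Sigma^c_\gamma$ formulas, this forces both $\bar{b} <_* \bar{c}$ and $\bar{c} <_* \bar{b}'$, a contradiction.

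The main obstacle is that both the substructure step and the pair-equivalence step used highly specific features of the Friedman--Stanley construction. Lemma~\ref{lem:graphs:substructure} was proved using the explicit shape machinery on $L(G)$, and Lemmas~\ref{lem11} and \ref{lem18} exploited the abundant automorphisms of $L(G)$ coming from order- and $A_n$-preserving permutations of $\mathbb{Q}$. A general $\Phi(G)$ need carry no substructure information from inputs to outputs, and could well be rigid, admitting no non-trivial automorphisms at all. A plausible route is to work instead inside $\Phi(H)$ (which is always $X$-computable, and much richer than $\Phi(I)$ since $H$ has a non-well-founded tail): produce the three-tuple configuration at the level of $\Phi(H)$ using only compactness moves on the graph $H$, and then transfer the contradiction down to $\Phi(I)$ by an infinitary Tarski--Vaught style argument. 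I expect this transfer step, together with identifying the appropriate general analog of Lemma~\ref{lem:graphs:substructure} for an arbitrary Turing computable embedding, to be the crux of any proof and to require genuinely new ideas beyond those in the present paper.
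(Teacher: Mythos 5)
The statement you are attempting is stated in the paper as a \emph{conjecture}: the authors give no proof of it, and the body of the paper establishes only the special case where $\Phi$ is the Friedman--Stanley embedding $L$ (Theorem~\ref{main}). So there is no proof in the paper to compare your attempt against, and your proposal should be judged purely on whether it closes the gap itself.

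It does not, and you essentially say so in your final paragraph. The two load-bearing steps of the paper's argument --- (a) that $L(I)$ is a computable infinitary substructure of $L(H)$ (Lemma~\ref{lem:graphs:substructure}), and (b) the production of tuples $\bar{b},\bar{c},\bar{b}'$ with $\bar{b},\bar{c}\sim^\gamma\bar{c},\bar{b}'$ via Lemmas~\ref{lem11} and~\ref{lem18} --- both depend on the concrete combinatorics of $L(G)$ as a suborder of $\mathbb{Q}^{<\omega}$: the shape analysis, the elements of length $2$, and the wealth of automorphisms induced by order- and $A_n$-preserving permutations of $\mathbb{Q}$. For an arbitrary Turing computable embedding $\Phi$, none of this is available: $\Phi$ is only required to preserve and reflect isomorphism, so there is no reason for $\Phi(I)$ to sit inside $\Phi(H)$ as a substructure of any kind, and $\Phi(G)$ may be rigid, which kills the automorphism argument outright. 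You correctly identify these as the obstacles, but ``work inside $\Phi(H)$ and transfer down by a Tarski--Vaught style argument'' is a research program, not an argument; the transfer step is exactly where the difficulty lives, and you have not supplied it. Your diagnosis of why the paper's method does not generalize is accurate, but the statement remains open --- in the paper and in your write-up alike.
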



\begin{thebibliography}{999}



\bibitem{AK} C.\ J.\ Ash, J.\ F.\ Knight, \emph{Computable Structures and the Hyperarithmetical Hierarchy}, Elsevier, 2000

\bibitem{Baleva06} V.\ Baleva, ``The jump operation for structure degrees'', \emph{Arch. Math. Logic}, vol.\ 45(2006), no. 3, pp.\ 249--265.
  
\bibitem{CCKM}  W.\ Calvert, D.\ Cummins, J.\ F.\ Knight, and S.\ Miller, ``Comparing classes of finite structures'', \emph{Algebra and Logic}, vol.\ 43(2004), no. 6, pp.\ 374--392.


\bibitem{FS}  H.\ Friedman and L.\ Stanley, ``A Borel reducibility theory for classes of countable structures'', \emph{Journal of Symbolic Logic}, vol.\ 54(1989), no. 3, pp.\ 894--914.  

\bibitem{HTM^3}  M.\ Harrison-Trainor, A.\ Melnikov, R.\ Miller, and A.\ Montalb\'{a}n, ``Computable functors and effective interpretability'', \emph{Journal of Symbolic Logic}, vol.\ 82(2017), no. 1, pp.\ 77--97.

\bibitem{HTM^2}  M.\ Harrison-Trainor, R.\ Miller, and A.\ Montalb\'{a}n, ``Borel functors and infinitary interpretations'', \emph{Journal of Symbolic Logic}, vol.\ 83 (2018), no. 4, pp.\ 1434--1456.

\bibitem{HTM}  M.\ Harrison-Trainor and A.\ Montalb\'{a}n, ``The tree of tuples of a structure'', pre-print.  

\bibitem{HKSS}  D.\ Hirschfeldt, B.\ Khoussainov, R.\ Shore, and A.\ Slinko, ``Degree spectra and computable dimension in algebraic structures,'' \emph{APAL}, vol.\ 115(2002), pp.\ 71--113. 

\bibitem{K}  I.\ Kalimullin, ``Algorithmic reducibilities of algebraic structures'', \emph{Journal of Logic and Computation}, vol.\ 22(2012), no. 4, pp.\ 831--843. 


\bibitem{K1} J.\ F.\ Knight, ``Degrees coded in jumps of orderings'', \emph{Journal of Symbolic Logic}, vol.\ 51(1986), no. 4, pp.\ 1034--1042. 

\bibitem{KMV}  J.\ F.\ Knight, S.\ Miller, and M.\ Vanden Boom, ``Turing computable embeddings'', \emph{Journal of Symbolic Logic}, vol.\ 72(2007), no. 3, pp.\ 901--918.  

\bibitem{L}  I.\ S.\ Lavrov, ``Effective inseparability of the set of identically true formulae and finitely refutable formulae for certain elementary theories'', \emph{Algebra and Logic}, vol\ 2(1963), pp.\ 5--18.

\bibitem{LE}  E.\ G.\ K.\ Lopez-Escobar, ``An interpolation theorem for denumerably long formulas'', \emph{Fundamenta Mathematicae}, vol.\ 57(1965), pp.\ 253--272. 

\bibitem{Ma}  A.\ Mal'tsev, ``Some correspondences between rings and groups'', \emph{Matematicheskii Sbornik. New Series}, vol.\ 50(1960), pp.\ 257--266

\bibitem{M} D.\ Marker, \emph{Model Theory: an Introduction}, Springer, GTM, 2002. 

\bibitem{Me} A.\ Mekler, ``Stability of nilpotent groups of class $2$ and prime exponent'', \emph{Journal of Symbolic Logic}, vol.\ 46(1981), pp.\ 781--788.

\bibitem{MPSS}  R.\ Miller, B.\ Poonen, H.\ Schoutens, A.\ Shlapentokh, ``A computable functor from graphs to fields'', \emph{Journal of Symbolic Logic}, vol.\ 83(2018), no. 1, pp.\ 326--348. 

\bibitem{Mon}  A.\ Montalb\'an : ``Notes on the jump of a structure''.
\emph{Mathematical Theory and Computational Practice}, CiE 2009 (K. Ambos-Spies, B. L\"owe, W. Merkle eds.) Lecture Notes in Computer Science,  {\bf 5635}  (2009) 372--378.

\bibitem{Mo}  A.\ Montalb\'{a}n, \emph{Computable Structure Theory}, draft of book.  

\bibitem{N} A.\ Nies, ``Undecidable fragments of elementary theories'', \emph{Algebra Universalis}, vol.\ 35(1996), no. 1, pp.\ 8--33.



\bibitem{R}  L.\ J.\ Richter, ``Degrees of structures'', \emph{Journal of Symbolic Logic}, vol.\ 46(1981), no. 4, pp.\ 723--731.


		\bibitem{SS}  A.\ A.\ Soskova and I.\ N.\ Soskov, ``A jump inversion theorem for the degree spectra'', \emph{Journal of Logic and Computation}, vol.\ 19(2009), pp.\ 199--215.

\bibitem{Stukachev} A.\ Stukachev, ``A jump inversion theorem for the semilattices of Sigma-degrees'', \emph{Siberian Advances in Mathematics}, vol.\ 20(2009), pp. 68--74 (English translation).
  
\end{thebibliography}
\end{document}